\newif\ifprint
\renewcommand{\mathbf}[1]{\bm{#1}} 
	\definecolor{linkred}{rgb}{0,0,0} 
	\definecolor{linkblue}{rgb}{0,0,0} 
	\definecolor{linkred}{rgb}{0.7,0.2,0.2}
	\definecolor{linkblue}{rgb}{0,0.2,0.6}
\numberwithin{equation}{section} 
\def\ps@handbook{\def\@oddhead{\hfill \leftmark \hfill\thepage }
\def\@evenhead{\thepage \hfill \rightmark \hfill}
\def\@oddfoot{}
\def\@evenfoot{}}
\def\@evenhead{}
\def\@oddfoot{}
\def\@evenfoot{\hfill\copyright\ China Higher Education Press}
\def\list#1#2{\ifnum \@listdepth >5\relax \@toodeep \else \global
\advance \@listdepth\@ne \fi \rightmargin \z@ \listparindent\z@
\itemindent\z@ \csname @list\romannumeral\the\@listdepth\endcsname
\def\@itemlabel{#1}\let\makelabel\@mklab \@nmbrlistfalse #2\relax
\@trivlist \parskip -\parsep \parindent\listparindent \advance
\linewidth -\rightmargin \advance\linewidth -\leftmargin \advance
\@totalleftmargin \leftmargin \parshape \@ne \@totalleftmargin
\linewidth \ignorespaces}
\renewcommand*\l@section{\@tocline{1}{0pt}{0em}{1em}{}}
\renewcommand*\l@subsection{\@tocline{2}{0pt}{1.5em}{2em}{}} 
\renewcommand{\theequation}{\thesection.\arabic{equation}}
\def\thebibliography#1{\section*{References}
\list{[\arabic{enumi}]}{\settowidth \labelwidth{[#1]} \leftmargin
\labelwidth \advance \leftmargin \labelsep \usecounter{enumi}}
\def\newblock{\hskip .11em plus .33em minus .07em} \sloppy
\clubpenalty 4000 \widowpenalty 4000 \sfcode`\.=1000 \relax}
\titleformat{\section}{\normalfont\large\bfseries}{\thesection.}{0.5em}{}[\kern0.em]
\titleformat{\subsection}{\normalfont\bfseries}{\thesubsection.}{0.3em}{}[\kern0.em]
\titleformat{\subsubsection}[runin]{\normalfont\bfseries}{\thesubsubsection.}{0.5em}{}[\kern0.5em]
\def\fofsubsubsection#1{\refstepcounter{equation}\subsubsection*{\theequation.\kern0.25em #1}}
\def\foisubsubsection#1{\refstepcounter{equation}\subsubsection*{\kern\parindent\theequation.\kern0.25em #1}}
\newcounter{saveenumi}
\newtheorem{claim}[equation]{Claim}
\newtheorem{conj}[equation]{Conjecture}
\newtheorem{cor}[equation]{Corollary}
\newtheorem{defn}[equation]{Definition}
\newtheorem{notation}[equation]{Notation}
\newtheorem{notation-ass}[equation]{Notation-Assumption}
\newtheorem{myprop}[equation]{Proposition}
\newtheorem{mythm}[equation]{Theorem}
\theoremstyle{remark}
\newtheorem{cons}[equation]{Consequence}
\newtheorem{construction}[equation]{Construction}
\newtheorem{example}[equation]{Example}
\newtheorem{fact}[equation]{Fact}
\newtheorem{obs}[equation]{Observation}
\newtheorem{rem}[equation]{Remark}
\newtheorem{questions}[equation]{Questions}
\newtheorem{question}[equation]{Question}
\newtheorem{sass}[equation]{Simplifying Assumptions}
\newtheorem{warning}[equation]{Warning}
\DeclareMathOperator{\sA}{\mathcal A}
\DeclareMathOperator{\sO}{\mathcal O}
\DeclareMathOperator{\codim}{codim}
\DeclareMathOperator{\Id}{Id}
\DeclareMathOperator{\Image}{Image}
\DeclareMathOperator{\Pic}{Pic}
\DeclareMathOperator{\Spec}{Spec}
\DeclareMathOperator{\supp}{supp}
\DeclareMathOperator{\Sym}{Sym}
\DeclareMathOperator{\Var}{Var}
\begin{document}
\setcounter{page}{1}
%
%
\long\def\replace#1{#1}

%
%
\title[Differential forms, MMP, and Hyperbolicity]{Differential forms on
  singular spaces, the minimal model program, and hyperbolicity of moduli
  stacks}

\date{\today}

\dedicatory{In memory of Eckart Viehweg}

%
%
\author{Stefan Kebekus}
\address{Stefan Kebekus, Albert-Ludwigs Universität Freiburg, Mathematisches Institut, Eckerstraße 1, 79104 Freiburg im Breisgau, Germany}
\email{stefan.kebekus@math.uni-freiburg.de}

%
%
\subjclass[2000]{Primary 14D22; Secondary 14D05}
\keywords{hyperbolicity properties of moduli spaces, minimal model program,
  differential forms on singular spaces}

\begin{abstract}
  The Shafarevich Hyperbolicity Conjecture, proven by Arakelov and Parshin,
  considers a smooth, projective family of algebraic curves over a smooth
  quasi-projective base curve $Y$. It asserts that if $Y$ is of special type,
  then the family is necessarily isotrivial.

  This survey discusses hyperbolicity properties of moduli stacks and
  generalisations of the Shafarevich Hyperbolicity Conjecture to higher
  dimensions. It concentrates on methods and results that relate moduli theory
  with recent progress in higher dimensional birational geometry.
\end{abstract}  

\maketitle
\thispagestyle{empty}
%
%

\tableofcontents

\section{Introduction}

\subsection{The Shafarevich hyperbolicity conjecture}

\subsubsection{Statement}

In his contribution to the 1962 International Congress of Mathematicians, Igor
Shafarevich formulated an influential conjecture, considering smooth,
projective families $f^\circ: X^\circ \to Y^\circ$ of curves of genus $g > 1$,
over a fixed smooth quasi-projective base curve $Y^\circ$.  One part of the
conjecture, known as the ``hyperbolicity conjecture'', gives a sufficient
criterion to guarantee that any such family is isotrivial. The conjecture was
shown in two seminal works by Parshin and Arakelov, including the following
special case.

\begin{mythm}[Shavarevich Hyperbolicity Conjecture, \cite{Shaf63}, \cite{Parshin68, Arakelov71}]\label{thm:shafarevich}
  Let $f^\circ: X^\circ \to Y^\circ$ be a smooth, complex, projective family
  of curves of genus $g > 1$, over a smooth quasi-projective base curve
  $Y^\circ$.  If $Y^\circ$ is isomorphic to one of the following varieties,
  \begin{itemize}
  \item the projective line $\mathbb P^1$,
  \item the affine line $\mathbb A^1$,
  \item the affine line minus one point $\mathbb C^*$, or
  \item an elliptic curve,
  \end{itemize}
  then any two fibres of $f^\circ$ are necessarily isomorphic.
\end{mythm}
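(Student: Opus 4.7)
The plan is to reduce the statement to a numerical inequality on the base by exploiting the common feature that all four listed varieties $Y^\circ$ admit a smooth projective compactification $Y \supset Y^\circ$ whose boundary divisor $S := Y \setminus Y^\circ$ satisfies $\deg \Omega^1_Y(\log S) \leq 0$. Using $K_{\mathbb P^1} \simeq \mathcal O(-2)$ together with the triviality of $\Omega^1$ on an elliptic curve, one computes these degrees to be $-2$, $-1$, $0$, and $0$ in the four respective cases.

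First I would carry out semistable reduction. After replacing $Y^\circ$ by a suitable finite étale cover (which preserves both the hypothesis and the conclusion), the family $f^\circ$ extends to a semistable family $f \colon X \to Y$ over the compactification, with at worst nodal degenerations over $S$. This produces a well-defined Hodge bundle $\mathcal E := f_*\omega_{X/Y}$, locally free of rank $g$ on $Y$, whose degree will serve as the key numerical invariant measuring variation of the family.

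The core of the argument then consists in invoking three structural features of $\mathcal E$:
\begin{itemize}
\item[(i)] \emph{Fujita-Kawamata semipositivity}: $\mathcal E$ is nef on $Y$, and in particular $\deg \mathcal E \geq 0$.
\item[(ii)] \emph{Arakelov's inequality}: $\deg \mathcal E \leq \tfrac{g}{2}\, \deg \Omega^1_Y(\log S)$.
\item[(iii)] A \emph{strict positivity criterion}: any non-isotrivial family of curves of genus $g>1$ satisfies $\deg \mathcal E > 0$.
\end{itemize}
Combining (i) and (ii) with the degree computation above forces $\deg \mathcal E = 0$ in each of the four cases, and (iii) then yields the required isotriviality.

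The principal obstacle is the Arakelov inequality (ii), which is the genuine analytic input of the proof: it reflects the negative curvature of the period domain for polarised weight-one Hodge structures together with the Griffiths transversality of the period map, and it is precisely here that the assumption $g > 1$ enters essentially. The semipositivity of the Hodge bundle (i) and the strict positivity criterion (iii) are deep results in their own right, but are by now standard features in the theory of variations of Hodge structure, whereas (ii) embodies the hyperbolicity of $M_g$ in a form tailored to this numerical setup.
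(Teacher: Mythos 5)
Your argument is correct, and it is essentially Arakelov's original proof rather than the route this survey takes. The paper does not prove Theorem~\ref{thm:shafarevich} directly: it records it as an immediate corollary of the Viehweg--Zuo Theorem~\ref{thm:VZ}, which produces an invertible subsheaf $\sA \subseteq \Sym^m \Omega^1_Y(\log D)$ with $\kappa(\sA) \geq \Var(f^\circ)$; on the four listed curves one has $\deg \Omega^1_Y(\log D) \leq 0$, so no subsheaf of positive Kodaira--Iitaka dimension can exist and $\Var(f^\circ)=0$ follows. The sketch of Theorem~\ref{thm:VZ} for families of curves in Section~\ref{ssec:8B} gets its positivity from $f_*\bigl(\omega_{X/Y}^{\otimes 2}\bigr)$ and transports it into $\Sym^N\Omega^1_Y$ via the dual of the Kodaira--Spencer map. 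You instead work with the Hodge bundle $\mathcal E = f_*\omega_{X/Y}$ itself and replace the Viehweg--Zuo construction by the conjunction of Fujita semipositivity, the Arakelov inequality $\deg\mathcal E \leq \tfrac{g}{2}\deg\Omega^1_Y(\log S)$, and strict positivity of $\deg\mathcal E$ for non-isotrivial families. The final numerical step ($\deg(K_Y+S)\leq 0$ on the four curves) is the same in both arguments; the difference lies in where the positivity and the hyperbolicity input come from. Your route is sharper and more quantitative in the one-dimensional, genus-one-Hodge-structure setting, but it does not generalise to higher-dimensional fibres or bases, which is exactly why the survey is organised around the Viehweg--Zuo mechanism instead.

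One step deserves more care: semistable reduction cannot always be achieved by a finite \emph{étale} cover of $Y^\circ$. Both $\mathbb P^1$ and $\mathbb A^1$ are simply connected, so they admit no nontrivial such covers, and a connected cover of $\mathbb P^1$ ramified over a single point is trivial. This is repairable case by case: over $\mathbb P^1$ and an elliptic curve the family is already smooth and projective, so nothing need be done; over $\mathbb A^1$ the local monodromy at infinity is trivial (a loop around $\infty$ bounds in $\mathbb A^1$), hence unipotent, and the family extends to a semistable one over $\mathbb P^1$ without any base change; over $\mathbb C^*$ the covers $z \mapsto z^n$ are étale, preserve the isomorphism class of the base, and render the local monodromies at $0$ and $\infty$ unipotent. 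Alternatively one can avoid semistable reduction altogether by working with the Deligne canonical extension of the Hodge bundle, for which both the semipositivity and the Arakelov inequality are available. With this point addressed, the proof closes.
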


\begin{notation-ass}
  Throughout this paper, a \emph{family} is a flat morphism of algebraic
  varieties with connected fibres. We always work over the complex number
  field.
\end{notation-ass}

\begin{rem}
  Following standard convention, we refer to Theorem~\ref{thm:shafarevich} as
  ``Shafarevich hyperbolicity conjecture'' rather than ``Arakelov-Parshin
  theorem''.  The reader interested in a complete picture is referred to
  \cite[p.~253ff]{Viehweg01}, where all parts of the Shafarevich conjecture
  are discussed in more detail.
\end{rem}

Formulated in modern terms, Theorem~\ref{thm:shafarevich} asserts that any
morphism from a smooth, quasi-projective curve $Y^\circ$ to the moduli stack
of algebraic curves is necessarily constant if $Y^\circ$ is one of the special
curves mentioned in the theorem. If $Y^\circ$ is a quasi-projective variety of
arbitrary dimension, then any morphism from $Y^\circ$ to the moduli stack
contracts all rational and elliptic curves, as well as all affine lines and
$\mathbb C^*$s that are contained in $Y^\circ$.

We refer to \cite[Sect.~16.E.1]{HK10} for a discussion of the relation between
the Shafarevich hyperbolicity conjecture and the notions of Brody-- and Kobayashi
hyperbolicity.

\subsubsection{Aim and scope}

This survey is concerned with generalisations of the Shafarevich hyperbolicity
conjecture to higher dimensions, concentrating on methods and results that
relate moduli-- and minimal model theory. We hope that the methods presented
here will be applicable to a much wider ranges of problems, in moduli theory
and elsewhere.  The list of problems that we would like to address include the
following.

\begin{questions}
  Apart from the quasi-projective curves mentioned above, what other varieties
  admit only constant maps to the moduli stack of curves? What about moduli
  stacks of higher dimensional varieties? Given a variety $Y^\circ$, is there
  a good geometric description of the subvarieties that will always be
  contracted by any morphism to any reasonable moduli stack?
\end{questions}

Much progress has been achieved in the last years and several of the questions
can be answered today. It turns out that there is a close connection between
the minimal model program of a given quasi-projective variety $Y^\circ$, and
its possible morphisms to moduli stacks. Some of the answers obtained are in
fact consequences of this connection.

In the limited number of pages available, we say almost nothing about the
history of higher dimensional moduli, or about the large body of important
works that approach the problem from other points of view. Hardly any serious
attempt is made to give a comprehensive list of references, and the author
apologises to all those whose works are not adequately represented here, be it
because of the author's ignorance or simply because of lack of space.

The reader who is interested in a broader overview of higher dimensional
moduli, its history, complete references, and perhaps also in rigidity
questions for morphisms to moduli stacks is strongly encouraged to consult the
excellent surveys found in this handbook and elsewhere, including \cite{HK10,
  Kovacs06a, Viehweg01}. A gentle and very readable introduction to moduli of
higher dimensional varieties in also found in \cite{MR2483953}, while
Viehweg's book \cite{V95} serves as a standard technical reference for the
construction of moduli spaces.

Most relevant notions and facts from minimal model theory can either be found
in the introductory text \cite{Matsuki02}, or in the extremely clear and
well-written reference book \cite{KM98}. Recent progress in minimal model
theory is surveyed in \cite{HK10}.

\subsection{Outline of this paper}
\label{ssec:outline}

Section~\ref{sec:generalization} introduces a number of conjectural
generalisations of the Shafarevich hyperbolicity conjecture and gives an
overview of the results that have been obtained in this direction. In
particular, we mention results relating the moduli map and the minimal model
program of the base of a family.

Sections~\ref{sec:VZ} and \ref{sec:ext} introduce the reader to methods that
have been developed to attack the conjectures mentioned in
Section~\ref{sec:generalization}. While Section~\ref{sec:VZ} concentrates on
positivity results on moduli spaces and on Viehweg and Zuo's construction of
(pluri-)differential forms on base manifolds of families,
Section~\ref{sec:ext} summarises results concerning differential forms on
singular spaces. Both sections contain sketches of proofs which aim to give an
idea of the methods that go into the proofs, and which might serve as a
guideline to the original literature. The introduction to
Section~\ref{sec:ext} motivates the results on differential forms by
explaining a first strategy of proof for a special case of a (conjectural)
generalisation of the Shafarevich hyperbolicity conjecture. Following this
plan of attack, a more general case is treated in the concluding
Section~\ref{sec:ideaOfProof}, illustrating the use of the methods introduced
before.

\subsection{Acknowledgements}

This paper is dedicated to the memory of Eckart Viehweg. Like so many other
mathematicians of his age group, the author benefited immensely from Eckart's
presence in the field, his enthusiasm, guidance and support. Eckart will be
remembered as an outstanding mathematician, and as a fine human being.

\medskip

The work on this paper was partially supported by the DFG Forschergruppe 790
``Classification of algebraic surfaces and compact complex
manifolds''. Patrick Graf kindly read earlier versions of this paper and
helped to remove several errors and misprints. Many of the results presented
here have been obtained in joint work of Sándor Kovács and the author. The
author would like to thank Sándor for innumerable discussions, and for a long
lasting collaboration. He would also like to thank the anonymous referee for
careful reading and for numerous suggestions that helped to improve the
quality of this survey.

Not all the material presented here is new, and some parts of this survey have
appeared in similar form elsewhere. The author would like to thank his
coauthors for allowing him to use material from their joint research
papers. The first subsection of every chapter lists the sources that the
author was aware of.

\section{Generalisations of the Shafarevich hyperbolicity conjecture}
\label{sec:generalization}

\subsection{Families of higher dimensional varieties}

Given its importance in algebraic and arithmetic geometry, much work has been
invested to generalise the Shafarevich hyperbolicity conjecture,
Theorem~\ref{thm:shafarevich}. Historically, the first generalisations have
been concerned with families $f^\circ : X^\circ \to Y^\circ$ where $Y^\circ$
is still a quasi-projective curve, but where the fibres of $f^\circ$ are
allowed to have higher dimension. The following elementary example shows,
however, that Theorem~\ref{thm:shafarevich} cannot be generalised na\"ively,
and that additional conditions must be posed.

\begin{example}[Counterexample to the Shafarevich hyperbolicity conjecture for higher dimensional fibers]\label{ex:counter}
  Consider a smooth projective surface $Y$ of general type which contains a
  rational or elliptic curve $C \subset Y$. Assume that the automorphism group
  of $Y$ fixes the curve $C$ pointwise. Examples can be obtained by choosing
  any surface of general type and then blowing up sufficiently many points in
  sufficiently general position ---each blow-up will create a rational curve
  and lower the number of automorphisms. Thus, if $c_1$ and $c_2 \in C$ are
  any two distinct points, then the surfaces $Y_{c_i}$ obtained by blowing up
  the points $c_i$ are non-isomorphic.

  In order to construct a proper family, consider the product $Y \times C$
  with its projection $\pi: Y \times C \to C$ and with the natural section
  $\Delta \subset Y \times C$. If $X$ is the blow-up of $Y \times C$ in
  $\Delta$, then we obtain a smooth, projective family $f: X \to C$ of
  surfaces of general type, with the property that no two fibres are
  isomorphic.
\end{example}

It can well be argued that Counterexample~\ref{ex:counter} is not very
natural, and that the fibres of the family $f$ would trivially be isomorphic
if they had not been blown up artificially. This might suggest to consider
only families that are ``not the blow-up of something else''. One way to make
this condition is precise is to consider only \emph{families of minimal
  surfaces}, i.e., surfaces $F$ whose canonical bundle $K_F$ is semiample. In
higher dimensions, it is often advantageous to impose a stronger condition and
consider only \emph{families of canonically polarised manifolds}, i.e.,
manifolds $F$ whose canonical bundle $K_F$ is ample.

Hyperbolicity properties of families of minimal surfaces and families of
minimal varieties have been studied by a large number of people, including
Migliorini \cite{Migliorini95}, Kovács \cite{Kovacs96e, Kovacs97a} and
Oguiso-Viehweg \cite{Oguiso-Viehweg01}. For families of canonically polarised
manifolds, the analogue of Theorem~\ref{thm:shafarevich} has been shown by
Kovács in the algebraic setup \cite{Kovacs00a}. Combining algebraic arguments
with deep analytic methods, Viehweg and Zuo prove a more general Brody
hyperbolicity theorem for moduli spaces of canonically polarised manifolds
which also implies an analogue of Theorem~\ref{thm:shafarevich},
\cite{Vie-Zuo03a}.

\begin{mythm}[Hyperbolicity for families of canononically polarized varieties, \cite{Kovacs00a, Vie-Zuo03a}]\label{thm:kovhyp}
  Let $f^\circ: X^\circ \to Y^\circ$ be a smooth, complex, projective family
  of canonically polarised varieties of arbitrary dimension, over a smooth
  quasi-projective base curve $Y^\circ$. Then the conclusion of the
  Shafarevich hyperbolicity conjecture, Theorem~\ref{thm:shafarevich},
  holds. \qed
\end{mythm}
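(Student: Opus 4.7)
The plan is to compactify the base, apply the construction of Viehweg and Zuo to produce a positive sub-line bundle in a symmetric power of the logarithmic cotangent sheaf under the assumption of non-isotriviality, and then derive a numerical contradiction from the fact that the logarithmic cotangent sheaf of the compactified base has non-positive degree in each of the four listed cases.

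\emph{Step 1 (Compactification and numerics).} Let $Y$ denote the unique smooth projective completion of $Y^\circ$, and set $D := Y \setminus Y^\circ$, regarded as a reduced boundary divisor. A direct computation shows
\[
  \deg \Omega^1_Y(\log D) \;=\; 2g(Y) - 2 + \deg D \;\in\; \{-2,\,-1,\,0,\,0\}
\]
in the four cases $\mathbb{P}^1$, $\mathbb{A}^1$, $\mathbb{C}^*$, and elliptic, respectively. In particular $\deg \Omega^1_Y(\log D) \leq 0$.

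\emph{Step 2 (Viehweg-Zuo sheaves).} Assume, aiming at a contradiction, that $f^\circ$ is not isotrivial. Then the induced moduli map has positive variation, and the construction of \cite{Vie-Zuo03a} produces an integer $N \gg 0$ together with an invertible subsheaf
\[
  \mathcal{A} \;\hookrightarrow\; \Sym^N \Omega^1_Y(\log D)
\]
which is big. The construction proceeds via a semistable reduction of $f^\circ$ and combines Viehweg's weak positivity theorem for $f_* \omega^{\otimes\nu}_{X/Y}$ with an iterated use of the Kodaira-Spencer map of a variation of Hodge structure attached to a desingularised cyclic cover branched along a sufficiently positive divisor in a multiple of $f_* \omega^{\otimes \nu}_{X/Y}$; non-isotriviality forces this iterated Kodaira-Spencer map to be non-zero at some level, which produces the required big subsheaf in the symmetric power of the log cotangent bundle. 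Kovács' proof in \cite{Kovacs00a} reaches the analogous conclusion by a different route, but the Viehweg-Zuo formulation is particularly convenient on a curve.

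\emph{Step 3 (Contradiction).} Since $Y$ is a smooth projective curve, $\Sym^N \Omega^1_Y(\log D)$ is simply the line bundle $\Omega^1_Y(\log D)^{\otimes N}$ of degree $N \cdot \deg \Omega^1_Y(\log D) \leq 0$. An injection of invertible sheaves on a smooth curve is degree-decreasing, so $\deg \mathcal{A} \leq 0$, contradicting the bigness of $\mathcal{A}$. Consequently $f^\circ$ must be isotrivial, which is exactly the conclusion asserted by Theorem~\ref{thm:shafarevich}.

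\emph{Main obstacle.} The only serious work is Step 2: the construction of Viehweg-Zuo sheaves relies on a subtle Hodge-theoretic analysis of variations of Hodge structure on desingularised cyclic covers, controlled by Viehweg's weak positivity package for direct images of relative pluricanonical sheaves. Once that deep input is granted, the curve case of the theorem reduces to the elementary degree count above.
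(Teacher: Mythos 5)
Your argument is correct and follows exactly the route the paper itself indicates: Theorem~\ref{thm:kovhyp} is quoted there as a known result, but the remark following Theorem~\ref{thm:VZ} observes that it is an immediate corollary of the Viehweg--Zuo construction, which is precisely your Step~2, and your Steps~1 and~3 supply the elementary degree count (the four listed curves are exactly those with $\deg\bigl(K_Y+D\bigr)\leq 0$, so no big invertible subsheaf of $\Sym^N\Omega^1_Y(\log D)$ can exist). The only caveat is that the deep input, the existence of the big Viehweg--Zuo sheaf, is taken as a black box in both your write-up and the survey, so nothing is lost.
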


\subsection{Families over higher dimensional base manifolds}

This paper discusses generalisations of the Shafarevich hyperbolicity
conjecture to families over higher dimensional base manifolds. To formulate
any generalisation, two points need to be clarified.
\begin{enumerate}
\item We need to define a higher dimensional analogue for the list of
  quasi-projective curves given in Theorem~\ref{thm:shafarevich}.

\item Given any family $f^\circ : X^\circ \to Y^\circ$ over a
  higher dimensional base, call two points $y_1$, $y_2 \in Y^\circ$ equivalent
  if the fibres $(f^\circ)^{-1}(y_1)$ and $(f^\circ)^{-1}(y_2)$ are
  isomorphic. If $Y^\circ$ is a curve, then either there is only one
  equivalence class, or all equivalence classes are finite. For families over
  higher dimensional base manifolds, the equivalence classes will generally be
  subvarieties of arbitrary dimension. We will need to have a quantitative
  measure for the number of equivalence classes and their dimensions.
\end{enumerate}

The problems outlined above justify the definition of the \emph{logarithmic
  Kodaira dimension} and of the \emph{variation of a family},
respectively. Before coming to the generalisations of the Shafarevich
hyperbolicity conjecture in Section~\ref{ssec:ViehwegConj} below, we recall
the definitions for the reader's convenience.

\subsubsection{The logarithmic Kodaira dimension}

The logarithmic Kodaira dimension generalises the classical notion of Kodaira
dimension to the category of quasi-projective varieties. 

\begin{defn}[Logarithmic Kodaira dimension]
  Let $Y^\circ$ be a smooth quasi-projective variety and $Y$ a smooth
  projective compactification of $Y^\circ$ such that $D := Y \setminus
  Y^\circ$ is a divisor with simple normal crossings. The logarithmic Kodaira
  dimension of $Y^\circ$, denoted by $\kappa(Y^\circ)$, is defined to be the
  Kodaira-Iitaka dimension of the line bundle $\sO_Y(K_Y + D) \in \Pic(Y)$.  A
  quasi-projective variety $Y^\circ$ is called of \emph{log general type} if
  $\kappa(Y^\circ)=\dim Y^\circ$, i.e., the divisor $K_Y+D$ is big.
\end{defn}

It is a standard fact of logarithmic geometry that a compactification $Y$ with
the described properties exists, and that the logarithmic Kodaira dimension
$\kappa(Y^\circ)$ does not depend on the choice of the compactification.

\begin{obs}
  The quasi-projective curves listed in Theorem~\ref{thm:shafarevich} are
  precisely those curves $Y^\circ$ with logarithmic Kodaira dimension
  $\kappa(Y^\circ) \leq 0$.
\end{obs}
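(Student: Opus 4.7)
The plan is to reduce the observation to a degree computation on the unique smooth projective compactification. Let $Y^\circ$ be a smooth quasi-projective curve, $Y$ its smooth projective compactification, $g$ the genus of $Y$, and $n = |D|$ the number of points in $D = Y \setminus Y^\circ$. Since $\dim Y = 1$, the Kodaira-Iitaka dimension of any line bundle $L \in \Pic(Y)$ is entirely determined by its degree: it equals $1$ if $\deg L > 0$, it equals $-\infty$ if $\deg L < 0$, and it lies in $\{-\infty, 0\}$ if $\deg L = 0$. Applying this to $L = \sO_Y(K_Y + D)$, which has degree $2g - 2 + n$, I obtain that $\kappa(Y^\circ) \leq 0$ if and only if $2g - 2 + n \leq 0$.

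Next, I would enumerate the non-negative integer solutions of $2g - 2 + n \leq 0$. There are exactly four: $(g,n) = (0,0), (0,1), (0,2), (1,0)$. These correspond, respectively, to $Y^\circ \cong \mathbb P^1$, $Y^\circ \cong \mathbb A^1$, $Y^\circ \cong \mathbb C^*$, and $Y^\circ$ being an elliptic curve; this matches the list in Theorem~\ref{thm:shafarevich} exactly.

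The only point that is not purely numerical is to rule out $\kappa(Y^\circ) = -\infty$ in the two boundary cases where $2g - 2 + n = 0$, i.e.\ $(g,n) \in \{(0,2),(1,0)\}$. In both cases one checks that $\sO_Y(K_Y + D)$ is in fact trivial: for an elliptic curve $D = 0$ and $K_Y \cong \sO_Y$ by definition, while for $\mathbb C^* = \mathbb P^1 \setminus \{0, \infty\}$ one has $K_{\mathbb P^1} + D = \sO_{\mathbb P^1}(-2) \otimes \sO_{\mathbb P^1}(1) \otimes \sO_{\mathbb P^1}(1) = \sO_{\mathbb P^1}$. A trivial line bundle has Kodaira-Iitaka dimension $0$, so indeed $\kappa(Y^\circ) = 0$ in these two cases, confirming the equivalence. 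No step is really a serious obstacle, since everything reduces to well-known classifications of smooth projective curves of genus $\leq 1$.
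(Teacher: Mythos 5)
Your argument is correct and is exactly the standard computation implicit in the paper, which states this Observation without proof: on a curve the Kodaira--Iitaka dimension of $K_Y+D$ is governed by $\deg(K_Y+D)=2g-2+n$, and the solutions of $2g-2+n\leq 0$ give precisely the four listed curves. Your extra verification that $\kappa(Y^\circ)=0$ (rather than $-\infty$) in the two boundary cases is not even needed for the stated inequality $\kappa(Y^\circ)\leq 0$, but it is harmless and correctly done.
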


\subsubsection{The variation of a family}

The following definition provides a quantitative measure of the
\emph{birational} variation of a family. Note that the definition is
meaningful even in cases where no moduli space exists.

\begin{defn}[\protect{Variation of a family, cf.~\cite[Introduction]{Viehweg83}}]\label{def:var}
  Let $f^\circ: X^\circ \to Y^\circ$ be a projective family over an
  irreducible base $Y^\circ$, and let $\overline{\mathbb C(Y^\circ)}$ denote
  the algebraic closure of the function field of $Y^\circ$. The variation of
  $f^\circ$, denoted by $\Var(f^\circ)$, is defined as the smallest integer
  $\nu$ for which there exists a subfield $K$ of $\overline{\mathbb
    C(Y^\circ)}$, finitely generated of transcendence degree $\nu$ over
  $\mathbb C$ and a $K$-variety $F$ such that $X\times_{Y^\circ} \Spec
  \overline{\mathbb C(Y^\circ)}$ is birationally equivalent to $F\times_{\Spec
    K}\Spec\overline{\mathbb C(Y^\circ)}$.
\end{defn}

\begin{rem}
  In the setup of Definition~\ref{def:var}, assume that all fibres if
  $Y^\circ$ are canonically polarised complex manifolds. Then coarse moduli
  schemes are known to exist, \cite[Thm.~1.11]{V95}, and the variation is the
  same as either the dimension of the image of $Y^\circ$ in moduli, or the
  rank of the Kodaira-Spencer map at the general point of $Y^\circ$.  Further,
  one obtains that $\Var(f^\circ) = 0$ if and only if all fibres of $f^\circ$
  are isomorphic. In this case, the family $f^\circ$ is called ``isotrivial''.
\end{rem}

\subsubsection{Viehweg's conjecture}
\label{ssec:ViehwegConj}

Using the notion of ``logarithmic Kodaira dimension'' and ``variation'', the
Shafarevich hyperbolicity conjecture can be reformulated as follows.
\begin{mythm}[Reformulation of Theorem~\ref{thm:shafarevich}]
  If $f^\circ: X^\circ \to Y^\circ$ is any smooth, complex, projective family
  of curves of genus $g > 1$, over a smooth quasi-projective base curve
  $Y^\circ$, and if $\Var(f^\circ) = \dim Y^\circ$, then $\kappa(Y^\circ) =
  \dim Y^\circ$.
\end{mythm}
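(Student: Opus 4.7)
The plan is to show that this is essentially a direct translation of Theorem~\ref{thm:shafarevich} through the dictionary provided by the two preceding definitions, so I would proceed by contrapositive. Suppose $\kappa(Y^\circ) \neq \dim Y^\circ = 1$, i.e., $\kappa(Y^\circ) \leq 0$. The preceding Observation identifies such curves $Y^\circ$ precisely with the list appearing in Theorem~\ref{thm:shafarevich}: the projective line, the affine line, $\mathbb{C}^*$, or an elliptic curve. Hence Theorem~\ref{thm:shafarevich} applies and tells us that any two fibres of $f^\circ$ are isomorphic.

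Next I would invoke the Remark following Definition~\ref{def:var}: smooth projective curves of genus $g>1$ are canonically polarised, so a coarse moduli scheme exists, and in this setting $\Var(f^\circ) = 0$ is equivalent to all fibres being pairwise isomorphic. Combining this with the previous step, we get $\Var(f^\circ) = 0$, contradicting the hypothesis $\Var(f^\circ) = \dim Y^\circ = 1$. Therefore $\kappa(Y^\circ) = 1 = \dim Y^\circ$, as desired.

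There is no substantive obstacle: both the classification of curves by logarithmic Kodaira dimension (which gives $\kappa \leq 0$ iff $Y^\circ$ belongs to the listed four types) and the identification of isotriviality with vanishing of the variation for families of canonically polarised manifolds are standard facts recorded in the excerpt. The only thing to verify carefully is that a smooth, projective curve of genus $g>1$ is indeed canonically polarised, so that the cited Remark applies; this is immediate from the ampleness of $K_F$ on such a curve. Thus the reformulation follows by unwinding the definitions and applying Theorem~\ref{thm:shafarevich} directly. \qed
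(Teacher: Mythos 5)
Your argument is correct and is exactly the definition-unwinding the paper intends: the paper states this reformulation without proof, treating it as immediate from the Observation (the four listed curves are precisely those with $\kappa(Y^\circ)\leq 0$) and the Remark after the definition of variation ($\Var(f^\circ)=0$ iff the family is isotrivial, applicable since genus $g>1$ curves are canonically polarised). Your contrapositive reduction to Theorem~\ref{thm:shafarevich} is precisely the right and complete argument.
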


Aiming to generalise the Shafarevich hyperbolicity conjecture to families over
higher dimensional base manifolds, Viehweg has conjectured that this
reformulation holds true in arbitrary dimension.

\begin{conj}[\protect{Viehweg's conjecture, \cite[6.3]{Viehweg01}}]\label{conj:Viehweg}
  Let $f^\circ: X^\circ \to Y^\circ$ be a smooth projective family of
  varieties with semiample canonical bundle, over a quasi-projective manifold
  $Y^\circ$. If $f^\circ$ has maximal variation, then $Y^\circ$ is of log
  general type. In other words,
  $$
  \Var(f^\circ) = \dim Y^\circ \, \Rightarrow \, \kappa(Y^\circ) = \dim
  Y^\circ.
  $$
\end{conj}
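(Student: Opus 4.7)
The plan is to argue by contradiction: assume that the variation of $f^\circ$ is maximal but that $\kappa(Y^\circ) < \dim Y^\circ$, and derive an impossibility. First I would choose a smooth compactification $Y \supset Y^\circ$ with boundary $D := Y \setminus Y^\circ$ a divisor of simple normal crossings, and extend $f^\circ$ to a model $f : X \to Y$ (after further birational modifications of source and base, via a weak semistable reduction). The Viehweg--Zuo machinery, summarised in Section~\ref{sec:VZ}, then produces for some $n \gg 0$ a \emph{big} line bundle $\mathcal{A}$ together with an inclusion
\[
  \mathcal{A} \hookrightarrow \Sym^n \Omega^1_Y(\log D),
\]
the bigness of $\mathcal{A}$ being precisely what the hypothesis $\Var(f^\circ) = \dim Y^\circ$ buys.

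Next I would run the log minimal model program on the pair $(Y, D)$. Under the contradiction hypothesis, $K_Y + D$ fails to be big, so one of two things happens. If $K_Y + D$ is pseudo-effective, the MMP (assuming the relevant termination and abundance results needed in the log general type setting) yields a dlt minimal model $(Y', D')$ equipped with an Iitaka fibration $g : Y' \to Z$ of positive relative dimension whose general fibre $F$ has trivial log canonical class. If $K_Y + D$ is not pseudo-effective, the MMP ends in a log Mori fibre space $\pi : Y' \to Z$ with $\dim Z < \dim Y'$ and general fibre $F$ satisfying $-(K_F + D'|_F)$ ample. In either scenario the endpoint carries a nontrivial fibration onto a lower-dimensional base whose general fibre is log Fano or log Calabi--Yau.

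The core of the argument is to transport the Viehweg--Zuo sheaf along the birational map $\varphi : Y \dashrightarrow Y'$ and to exploit the positivity contradiction on the MMP output. Here I would invoke the extension theorem for pluri-differential forms on klt spaces described in Section~\ref{sec:ext}: it guarantees that the composition of $\mathcal{A} \hookrightarrow \Sym^n \Omega^1_Y(\log D)$ with the natural map associated with $\varphi$ descends to an inclusion
\[
  \mathcal{A}' \hookrightarrow \Sym^{n} \Omega^{[1]}_{Y'}(\log D')
\]
of reflexive sheaves, where $\mathcal{A}'$ is the strict transform of $\mathcal{A}$ and remains big (divisorial contractions and flips alter neither reflexive symmetric log differentials in codimension one nor the Iitaka dimension of a line bundle whose stable base locus meets the exceptional set properly).

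The final step is to restrict to a general fibre $F$ of $g$ or $\pi$ and apply a Bogomolov--Sommese--type vanishing theorem for symmetric log differentials on klt pairs, in the form extended to the singular setting in Section~\ref{sec:ext}. Because $K_F + D'|_F$ is either trivial or anti-ample, the theorem forces $\mathcal{A}'|_F \equiv 0$, whence $\mathcal{A}'$ is numerically pulled back from $Z$; but a big line bundle cannot be pulled back from a variety of strictly smaller dimension, contradicting bigness of $\mathcal{A}'$. The step I expect to be genuinely delicate is the descent of $\mathcal{A}$ through the MMP with its bigness preserved: controlling the behaviour of the Viehweg--Zuo sheaf across flips, and ensuring that the Bogomolov--Sommese vanishing in the appropriate singular, logarithmic, \emph{symmetric-power} form is actually available, is where the real work lies, and where the results on differential forms on singular spaces from Section~\ref{sec:ext} become indispensable.
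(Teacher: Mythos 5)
First, note that the statement you are proving is an open conjecture: the paper establishes it only for $\dim Y^\circ \leq 3$ (Theorem~\ref{thm:mainresult0}), and explicitly attributes the dimensional restriction both to the state of the MMP and to our limited command of differential forms on singular spaces in higher dimension. Your overall architecture --- Viehweg--Zuo sheaf, log MMP on $(Y,D)$, transport of the sheaf to the minimal model, a vanishing-type contradiction on the output, induction via the resulting fibre space --- is indeed the paper's strategy. But beyond the conceded reliance on termination and abundance, there is a concrete gap in your end-game. You invoke ``a Bogomolov--Sommese-type vanishing theorem for symmetric log differentials on klt pairs, in the form extended to the singular setting in Section~\ref{sec:ext}.'' No such theorem exists there or elsewhere: the singular Bogomolov--Sommese vanishing of Corollary~\ref{cor:BS} applies only to rank-one $\mathbb Q$-Cartier subsheaves of a single reflexive exterior power $\Omega^{[p]}_X(\log\lfloor D\rfloor)$, and the paper explicitly lists the extension of the underlying pull-back theorem to symmetric and other tensor powers as an open problem, noting that the na\"ive generalisation is \emph{false} (cf.\ the examples cited at the end of Section~\ref{sec:ext}). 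The actual argument must therefore take a detour you omit: on the minimal model, numerical triviality of $K+D$ against a general Mehta--Ramanathan curve, combined with the bigness of the Viehweg--Zuo sheaf inside $\Sym^{[m]}\Omega^{[1]}(\log D)$, forces $\Omega^{[1]}(\log D)$ to be unstable; the determinant of the maximal destabilising subsheaf is then a rank-one subsheaf of positive degree in a single $\Omega^{[r]}(\log D)$, and it is to \emph{this} sheaf that Corollary~\ref{cor:BS} is applied, ruling out $\rho=1$ and exhibiting the fibre space. Your direct application of a symmetric-power vanishing to the fibre $F$, and the ensuing claim that $\mathcal A'$ is numerically pulled back from $Z$, has no available justification.

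Two further calibration points. The descent of $\mathcal A$ through the MMP, which you single out as the delicate step, is in fact the easy one: since the inverse of the MMP map contracts no divisors, the sheaf simply pushes forward as a reflexive sheaf over the complement of a codimension-two set, and the Kodaira--Iitaka dimension can only increase (Proposition~\ref{lem:pushdownA}); no extension theorem is needed for this direction. The genuinely hard ingredient is the pull-back theorem for reflexive differentials on lc pairs (Theorem~\ref{thm:generalpullback}), which is what makes Corollary~\ref{cor:BS} available on the singular minimal model. Finally, the concluding contradiction in the paper is not numerical: one restricts the \emph{family} $f^\circ$ to strict transforms of general fibres of the Mori or Kodaira fibration and applies the lower-dimensional case of the conjecture inductively to contradict maximal variation, as in Section~\ref{ssec:fiberspace}. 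Repairing your final step along these lines --- Harder--Narasimhan plus Corollary~\ref{cor:BS} to get the fibration, then induction on $\dim Y^\circ$ --- would bring the proposal in line with what is actually provable, though still only in the dimensions where the requisite MMP and singular-differential results are known.
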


Viehweg's conjecture has been proven by Sándor Kovács and the author in case
where $Y^\circ$ is a surface, \cite{KK08, KK07b}, or a threefold,
\cite{KK08c}.  The methods developed in these papers will be discussed, and an
idea of the proof will be given later in this paper, cf.~the outline of this
paper given in Section~\ref{ssec:outline} on page~\pageref{ssec:outline}.

\begin{mythm}[\protect{Viehweg's conjecture for families over threefolds, \cite[Thm.~1.1]{KK08c}}]\label{thm:mainresult0}
  Viehweg's conjecture holds in case where $\dim Y^\circ \leq 3$. \qed
\end{mythm}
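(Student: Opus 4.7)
The plan is to compactify, produce a Viehweg--Zuo sheaf of log symmetric differentials of maximal Kodaira--Iitaka dimension on the base, and then push this sheaf through a suitable log minimal model program for the pair $(Y,D)$, arriving at a contradiction with a Bogomolov--Sommese type bound on the output.

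First I would choose a smooth projective compactification $Y \supset Y^\circ$ with boundary $D := Y \setminus Y^\circ$ of simple normal crossings. The positivity results of Viehweg and Zuo recalled in Section~\ref{sec:VZ} then produce, for some positive integer $n$, an invertible subsheaf
$$
\mathcal{L} \subset \Sym^n \Omega^1_Y(\log D)
$$
whose Kodaira--Iitaka dimension satisfies $\kappa(Y,\mathcal{L}) \geq \Var(f^\circ)$. The maximal variation hypothesis therefore makes $\mathcal{L}$ big; the goal is to conclude that $K_Y + D$ is big as well.

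Assume for contradiction that $\kappa(Y, K_Y + D) < \dim Y \leq 3$. In this range the log MMP, together with abundance, is unconditionally available for log canonical pairs, so the MMP applied to $(Y,D)$ terminates with a dlt pair $(Y', D')$ that is either a good log minimal model, whose log Iitaka fibration $\varphi : Y' \to Z$ has $\dim Z = \kappa(Y, K_Y + D) < \dim Y$, or a log Mori fiber space $\varphi : Y' \to Z$ whose general fiber is of log Fano type. In both cases one has a morphism onto a base of strictly smaller dimension. The core of the argument is to transport $\mathcal{L}$ through this MMP: each divisorial contraction and flip is handled by pulling back to a common log resolution and pushing down, using the extension theorems for reflexive differential forms on dlt pairs reviewed in Section~\ref{sec:ext}. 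One thereby obtains on $(Y', D')$ a reflexive, rank-one subsheaf
$$
\mathcal{L}' \subset \Sym^{[n]} \Omega^{[1]}_{Y'}(\log \lfloor D' \rfloor)
$$
with $\kappa(Y', \mathcal{L}') \geq \kappa(Y, \mathcal{L}) = \dim Y$.

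To derive the contradiction I would apply a logarithmic Bogomolov--Sommese type inequality for symmetric reflexive log differentials on the dlt pair $(Y',D')$, combined with the geometry of $\varphi$. In the Mori fiber case the general fiber $F$ is rationally connected and supports no non-zero log symmetric differential, so $\mathcal{L}'$ must descend from $Z$, giving $\kappa(Y', \mathcal{L}') \leq \dim Z < \dim Y$. In the minimal model case the restriction $(K_{Y'} + D')|_F$ is numerically trivial on a general fiber $F$ of the Iitaka fibration, and a Bogomolov--Sommese argument again forces any big invertible subsheaf of $\Sym^{[n]} \Omega^{[1]}_{Y'}(\log \lfloor D' \rfloor)$ to come from the base $Z$, yielding the same bound $\kappa(Y', \mathcal{L}') \leq \dim Z < \dim Y$. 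Either alternative contradicts bigness of $\mathcal{L}'$, and hence $K_Y + D$ must have been big after all.

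The main obstacle is the passage of the Viehweg--Zuo sheaf across the MMP: divisorial contractions and flips create singularities on which naive pullback and pushforward of differential forms are not exact, and the argument therefore depends crucially on the extension theorems for reflexive differentials on klt and dlt pairs surveyed in Section~\ref{sec:ext}. The restriction $\dim Y \leq 3$ enters both to guarantee termination of the log MMP together with abundance for lc pairs, and to keep the singularities of the dlt output within the range where these extension results are effective.
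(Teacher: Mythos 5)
Your overall strategy---compactify, invoke the Viehweg--Zuo sheaf of Theorem~\ref{thm:VZ}, push it through a log MMP of $(Y,D)$, and derive a contradiction on the output---is the right skeleton, and the restriction $\dim Y^\circ\leq 3$ does enter exactly where you say it does. But the decisive step is missing. You conclude by invoking ``a logarithmic Bogomolov--Sommese type inequality for symmetric reflexive log differentials'' to force the big rank-one subsheaf $\mathcal L'\subset\Sym^{[n]}\Omega^{[1]}_{Y'}(\log\lfloor D'\rfloor)$ to descend to $Z$. No such inequality exists: Bogomolov--Sommese (Theorem~\ref{thm:classBSv}, and Corollary~\ref{cor:BS} in the singular case) bounds the Kodaira--Iitaka dimension of rank-one subsheaves of $\Omega^{[p]}$, not of its symmetric powers, and a big invertible subsheaf of $\Sym^n\Omega^1_Y(\log D)$ is not by itself a contradiction to anything (the paper even records as an open problem that naive extensions of these results to symmetric powers fail). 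The actual mechanism is a semistability argument: on the minimal model one knows $K_{Y_\lambda}+D_\lambda$ is numerically trivial (in the case $\kappa=0$), so on a general Mehta--Ramanathan complete intersection curve $C$ one has $c_1\bigl(\Sym^{[m]}\Omega^{[1]}_{Y_\lambda}(\log D_\lambda)\bigr).C=0$ while $c_1(\sA_\lambda).C>0$; hence $\Omega^{[1]}_{Y_\lambda}(\log D_\lambda)$ is unstable, and the determinant of the maximal destabilising subsheaf is a rank-one subsheaf of some $\Omega^{[r]}$ of positive degree and rank $r<\dim Y$. Only at that point does Bogomolov--Sommese for lc pairs (Corollary~\ref{cor:BS}, whose proof is where the pull-back Theorem~\ref{thm:generalpullback} is really consumed) give a contradiction with $\rho=1$, forcing a genuine fibre space structure (Claims~\ref{claim:5-4} and \ref{claim:55}). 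Without this Harder--Narasimhan step your argument has no way to get from $\Sym^n$ down to a single exterior power.

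Two further corrections. First, the transport of $\mathcal L$ across the MMP does not require the extension theorems for reflexive differentials: since the inverse of each MMP step contracts no divisor, one simply restricts to the big open set where it is an isomorphism and takes the reflexive extension (Proposition~\ref{lem:pushdownA} and Fact~\ref{fact:extension1}); the hard extension results are needed only for the Bogomolov--Sommese vanishing on the singular output. Second, your endgame in both the Mori-fibre and Iitaka-fibration cases (``$\mathcal L'$ descends from $Z$, hence $\kappa(\mathcal L')\leq\dim Z$'') is not how the contradiction is reached, and the claimed vanishing of log symmetric differentials on the fibres is false in general (already $\Omega^1_{\mathbb P^1}(\log D)\cong\sO_{\mathbb P^1}$ for a two-point boundary). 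Instead one restricts the original family $f^\circ$ to a general fibre of the fibration and applies the already-known lower-dimensional case of Viehweg's conjecture by induction: the restricted family cannot have maximal variation, contradicting $\Var(f^\circ)=\dim Y^\circ$ because the fibres dominate $Y_\mu$.
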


For families of \emph{canonically polarised} varieties, much stronger results
have been obtained, giving an explicit geometric explanation of
Theorem~\ref{thm:mainresult0}.

\begin{mythm}[\protect{Relationship between the moduli map and the MMP, \cite[Thm.~1.1]{KK08c}}]\label{thm:mainresult2}
  Let $f^\circ: X^\circ \to Y^\circ$ be a smooth projective family of
  canonically polarised varieties, over a quasi-projective manifold $Y^\circ$
  of dimension $\dim Y^\circ \leq 3$. Let $Y$ be a smooth compactification of
  $Y^\circ$ such that $D := Y \setminus Y^\circ$ is a divisor with simple
  normal crossings.

  Then any run of the minimal model program of the pair $(Y,D)$ will terminate
  in a Kodaira or Mori fibre space whose fibration factors the moduli map
  birationally.
\end{mythm}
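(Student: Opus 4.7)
The plan is to track a sheaf of pluri-logarithmic differentials produced by Viehweg-Zuo through the steps of the log MMP on $(Y,D)$, using extension theorems for reflexive differentials on mildly singular spaces, and then to use Bogomolov-Sommese type vanishing on the general fibres of the terminal contraction in order to force the moduli map to factor through it birationally.

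First I would apply the Viehweg-Zuo positivity theorem recalled in Section~\ref{sec:VZ} to produce, for some integer $n>0$, an invertible subsheaf $\mathcal{A} \subseteq \Sym^n \Omega^1_Y(\log D)$ whose Kodaira-Iitaka dimension is bounded below by $\Var(f^\circ)$. This sheaf realises the birational variation of $f^\circ$ as positivity of symmetric log-differentials on the smooth compactification $Y$.

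Next I would run the log MMP of the klt pair $(Y,D)$, obtaining a sequence of divisorial contractions and flips $(Y,D) \dashrightarrow (Y_1,D_1) \dashrightarrow \cdots \dashrightarrow (Y',D')$ terminating, by the results of the log MMP in dimension up to three, in either a minimal model (in which case log abundance yields the Iitaka/Kodaira fibration of $K_{Y'}+D'$) or a Mori fibre space $\pi: Y' \to Z$. The crucial technical input at this stage is the extension theorem for reflexive logarithmic differentials on dlt spaces surveyed in Section~\ref{sec:ext}: it allows one to propagate $\mathcal{A}$ across each MMP step, producing reflexive subsheaves $\mathcal{A}_i \subseteq \Sym^{[n]} \Omega^{[1]}_{Y_i}(\log D_i)$ whose Kodaira-Iitaka dimension is preserved. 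Divisorial contractions and flips are isomorphisms in codimension one, and the extension theorem controls what happens along their exceptional loci; this is the place where the hypothesis $\dim Y^\circ \leq 3$ enters essentially, via the availability of both the log MMP and the extension theorem in that range.

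The final step is to exploit the terminal fibration $\pi: Y' \to Z$. A general fibre $F$ of $\pi$ is either a klt Fano variety (Mori case) or satisfies $(K_{Y'}+D')|_F \sim_{\mathbb{Q}} 0$ (Kodaira case), so its log Kodaira dimension is $-\infty$ or $0$, respectively. By Bogomolov-Sommese type vanishing for klt pairs, the Kodaira-Iitaka dimension of the restricted sheaf $\mathcal{A}'|_F$ is bounded above by this log Kodaira dimension. Combined with the lower bound $\kappa(\mathcal{A}'|_F) \geq \Var(f^\circ|_F)$ supplied by Viehweg-Zuo applied to the restricted family $f^\circ|_F$, this forces $\Var(f^\circ|_F) = 0$, hence the moduli map is constant on $F$ and therefore factors through $\pi$ birationally. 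The main obstacle is the middle step: propagating $\mathcal{A}$ across divisorial contractions and flips requires the new extension results for reflexive differentials on dlt singularities that form the technical core of the strategy, and making the reflexive-hull bookkeeping rigorous (so that Kodaira-Iitaka dimension is truly preserved) is where the real work lies.
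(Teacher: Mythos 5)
Your overall architecture (Viehweg--Zuo sheaf, run the log MMP, analyse the terminal fibration) matches the paper's, but two of your three steps are carried by mechanisms that do not work as you state them, and the hardest case of the theorem is not addressed at all.

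First, your final step misuses Bogomolov--Sommese vanishing. Corollary~\ref{cor:BS} bounds $\kappa(\sA)$ by the \emph{degree} $p$ of the forms for a rank-one subsheaf $\sA \subseteq \Omega^{[p]}_X(\log \lfloor D \rfloor)$; it does not bound the Kodaira--Iitaka dimension of a subsheaf of $\Sym^{[n]}\Omega^{[1]}$ by the log Kodaira dimension of the ambient pair. The Viehweg--Zuo sheaf lives in a symmetric power, so some device is needed to convert its positivity into a rank-one subsheaf of an honest $\Omega^{[r]}$ before Bogomolov--Sommese can be invoked. In the paper's argument (Proposition~\ref{prop:VZec1} and Claims~\ref{claim:5-4}, \ref{claim:55}) this is done by restricting to a Mehta--Ramanathan general complete intersection curve, using the numerical triviality of $K+D$ on the minimal model to show $\Omega^{[1]}(\log D)$ is unstable, and taking the determinant of the maximal destabilising subsheaf; that determinant is a big rank-one subsheaf of $\Omega^{[r]}$ which contradicts Corollary~\ref{cor:BS} when $\rho=1$. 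This Harder--Narasimhan step is entirely absent from your proposal. Relatedly, your lower bound ``$\kappa(\sA'|_F)\geq \Var(f^\circ|_F)$'' assumes the Viehweg--Zuo sheaf is compatible with restriction to subvarieties; the paper explicitly lists this as an open problem. The correct move is to apply the already-established lower-dimensional case of the conjecture to the restricted family $f^\circ|_F$ directly, not to restrict $\sA'$.

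Second, your fibre-wise induction begs the question precisely where the theorem has content. If $\kappa(Y^\circ)=0$, the Kodaira fibration at the end of the MMP maps to a point, the ``general fibre'' is all of $Y_\lambda$, and no dimension reduction occurs; the assertion to be proved is that the family is isotrivial. This is where the paper's Section~\ref{sec:ideaOfProof} does its real work: one must first show $D_\lambda\neq\emptyset$ (via Miyaoka's uniruledness criterion applied to the destabilised tangent sheaf), then run a second MMP for $(Y_\lambda,(1-\varepsilon)D_\lambda)$, which has $\kappa=-\infty$, to extract a genuine Mori fibre space, and then show $\rho(Y_\mu)>1$ by the Harder--Narasimhan plus Bogomolov--Sommese argument before any induction on fibres is available. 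Finally, a minor misattribution: transporting $\sA$ through the MMP steps is the elementary Proposition~\ref{lem:pushdownA} (reflexive sheaves extend over codimension-two sets, and the inverse of an MMP step contracts no divisor); the pull-back theorem for reflexive differentials enters not there but through the proof of Bogomolov--Sommese vanishing for log canonical pairs on the singular minimal model.
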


\begin{rem}
  Neither the compactification $Y$ nor the minimal model program discussed in
  Theorem~\ref{thm:mainresult2} is unique. When running the minimal model
  program, one often needs to choose the extremal ray that is to be
  contracted.
\end{rem}

In order to explain the statement of Theorem~\ref{thm:mainresult2}, let
$\mathfrak M$ be the appropriate coarse moduli space whose existence is shown,
e.g.~in \cite[Thm.~1.11]{V95}. Further, let $\mu^\circ : Y^\circ \to \mathfrak
M$ be the moduli map associated with the family $f^\circ$, and let $\mu : Y
\dasharrow \mathfrak M$ be the associated rational map from the
compactification $Y$. If $\lambda : Y \dasharrow Y_\lambda$ is a rational map
obtained by running the minimal model program, and if $Y_\lambda \to
Z_\lambda$ is the associated Kodaira or Mori fibre space, then
Theorem~\ref{thm:mainresult2} asserts the existence of a map $Z_\lambda
\dasharrow \mathfrak M$ that makes the following diagram commutative,
$$
\xymatrix{Y \ar@{-->}[rrr]^{\lambda}_{\text{MMP of the pair } (Y,D)}
  \ar@{-->}[d]_{\text{moduli map induced by } f^\circ} &&&
  Y_\lambda \ar[d]^{\text{Kodaira or Mori fibre
space}} \\
\mathfrak M &&& Z_\lambda \ar@{-->}[lll]^{\exists !}.}
$$

Now, if we assume in addition that $\kappa(Y^\circ) \geq 0$, then the minimal
model program terminates in a Kodaira fibre space whose base $Z_\lambda$ has
dimension $\dim Z_\lambda = \kappa(Y^\circ)$, so that $\Var(f^\circ) \leq
\kappa(Y^\circ)$. If we assume that $\kappa(Y^\circ) = - \infty$, then the
minimal model program terminates in proper Mori fibre space and we obtain that
$\dim Z_\lambda < \dim Y$ and $\Var(f^\circ) < \dim Y^\circ$. The following
refined answer to Viehweg's conjecture is therefore an immediate corollary of
Theorem~\ref{thm:mainresult2}.

\begin{cor}[\protect{Refined answer to Viehweg's conjecture, \cite[Cor.~1.3]{KK08c}}]\label{cor:mainresult1}
  Let $f^\circ: X^\circ \to Y^\circ$ be a smooth projective family of
  canonically polarised varieties, over a quasi-projective manifold $Y^\circ$
  of dimension $\dim Y^\circ \leq 3$. Then either
  \begin{enumerate}
  \item $\kappa(Y^\circ) = -\infty$ and $\Var(f^\circ) < \dim Y^\circ$, or
  \item $\kappa(Y^\circ) \geq 0$ and $\Var(f^\circ) \leq
    \kappa(Y^\circ)$. \qed
  \end{enumerate}
\end{cor}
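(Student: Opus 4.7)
The plan is to unwind Theorem~\ref{thm:mainresult2} into a dichotomy driven by the sign of $\kappa(Y^\circ)$, exactly along the lines of the paragraph preceding the corollary. First I would fix a smooth compactification $Y$ of $Y^\circ$ with $D := Y \setminus Y^\circ$ a simple normal crossings divisor, so that the pair $(Y,D)$ is log smooth and $\kappa(Y^\circ) = \kappa(Y, K_Y+D)$ by definition. Running the minimal model program for the pair $(Y,D)$, Theorem~\ref{thm:mainresult2} supplies a birational map $\lambda: Y \dasharrow Y_\lambda$ ending in either a Kodaira or a Mori fibre space $Y_\lambda \to Z_\lambda$, together with a rational factorisation of the moduli map $\mu: Y \dasharrow \mathfrak M$ through $Z_\lambda$. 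Birational factorisation through $Z_\lambda$ forces $\Var(f^\circ) \leq \dim Z_\lambda$, because the variation is, by the remark following Definition~\ref{def:var}, the dimension of the image of $Y^\circ$ in the coarse moduli space $\mathfrak M$.

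Next I would distinguish the two cases according to whether $K_Y+D$ is pseudo-effective. If $\kappa(Y^\circ) = -\infty$, then $K_Y+D$ is not pseudo-effective and the MMP must terminate with a Mori fibre space $Y_\lambda \to Z_\lambda$; in particular $\dim Z_\lambda < \dim Y_\lambda = \dim Y = \dim Y^\circ$, which together with $\Var(f^\circ) \leq \dim Z_\lambda$ yields case~(i). If instead $\kappa(Y^\circ) \geq 0$, then $(Y,D)$ is log abundant along this run of the MMP, the program terminates in a minimal model $(Y_\lambda, D_\lambda)$, and the associated Kodaira (Iitaka) fibration has base of dimension
\[
\dim Z_\lambda = \kappa(Y_\lambda, K_{Y_\lambda}+D_\lambda) = \kappa(Y, K_Y+D) = \kappa(Y^\circ),
\]
the middle equality coming from the fact that the MMP for $(Y,D)$ preserves the log Kodaira dimension. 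Combined with $\Var(f^\circ) \leq \dim Z_\lambda$, this gives case~(ii).

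Neither step requires new ideas beyond what is already packaged in Theorem~\ref{thm:mainresult2}; the only mild subtlety I anticipate is bookkeeping around the choice of MMP run, since, as the remark after Theorem~\ref{thm:mainresult2} stresses, neither $Y$ nor $\lambda$ is canonical. This is harmless here because the conclusion of the corollary depends only on $\kappa(Y^\circ)$ and $\Var(f^\circ)$, both of which are intrinsic to $f^\circ$; one simply picks any single run of the MMP and reads off the inequality. The main obstacle in the whole story therefore lies entirely upstream, in establishing Theorem~\ref{thm:mainresult2} itself — that is, in showing that the Kodaira/Mori fibre space produced by the MMP genuinely factors the moduli map — and this is what the subsequent sections of the paper address; the corollary itself is a formal consequence.
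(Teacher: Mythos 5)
Your proposal is correct and follows exactly the route the paper takes: the paper derives Corollary~\ref{cor:mainresult1} as an ``immediate corollary'' of Theorem~\ref{thm:mainresult2} via precisely the dichotomy you describe --- a Mori fibre space with $\dim Z_\lambda < \dim Y$ when $\kappa(Y^\circ)=-\infty$, and a Kodaira fibre space with $\dim Z_\lambda = \kappa(Y^\circ)$ when $\kappa(Y^\circ)\geq 0$, combined with $\Var(f^\circ)\leq\dim Z_\lambda$ from the factorisation of the moduli map. Your closing remark that the real content lies upstream in Theorem~\ref{thm:mainresult2} matches the paper's presentation as well.
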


\begin{rem}
  Corollary~\ref{cor:mainresult1} asserts that any family of canonically
  polarised varieties over a base manifold $Y^\circ$ with $\kappa(Y^\circ) =
  0$ is necessarily isotrivial.
\end{rem}

\begin{rem}
  Corollary~\ref{cor:mainresult1} has also been shown in case where $Y^\circ$ is
  a \emph{projective} manifold of arbitrary dimension, conditional to the
  standard conjectures of minimal model theory\footnote{i.e., existence and
    termination of the minimal model program and abundance},
  cf.~\cite[Thm.~1.4]{KK08b}. A very short proof that does not rely on minimal
  model theory has been announced by Patakfalvi as this paper goes to print,
  \cite{Pat11}.
\end{rem}

\begin{example}[Optimality of Corollary~\ref{cor:mainresult1} in case $\kappa(Y^\circ) = -\infty$]
  To see that the result of Corollary~\ref{cor:mainresult1} is optimal in case
  $\kappa(Y^\circ) = -\infty$, let $f_1^\circ : X_1^\circ \to Y^\circ_1$ be
  any family of canonically polarised varieties with $\Var(f_1^\circ) =2$,
  over a smooth surface $Y^\circ_1$ (which may or may not be compact). Setting
  $X^\circ := X_1^\circ \times \mathbb P^1$ and $Y^\circ := Y_1^\circ \times
  \mathbb P^1$, we obtain a family $f^\circ = f^\circ_1 \times {\rm
    Id}_{\mathbb P^1}: X^\circ \to Y^\circ$ with variation $\Var(f^\circ) =
  2$, and with a base manifold $Y^\circ$ of Kodaira dimension $\kappa(Y^\circ)
  = - \infty$.
\end{example}

\begin{example}[Related and complementary results in case $\kappa(Y^\circ) = -\infty$]
  In the setup of Corollary~\ref{cor:mainresult1}, if $Y^\circ$ is a projective
  Fano manifold, then a fundamental result of Campana and Kollár-Miyaoka-Mori
  asserts that $Y^\circ$ is rationally connected, \cite[V.~Thm.~2.13]{K96}. In
  other words, given any two points $x$, $y$ in $Y^\circ$, there exists a
  rational curve $C \subset Y^\circ$ which contains both $x$ and $y$. Recalling
  from Theorem~\ref{thm:kovhyp} that families over rational curves are
  isotrivial, it follows immediately that the family $f^\circ$ is necessarily
  isotrivial itself.

  A much stronger version of this result has been shown by Lohmann,
  \cite{Loh11}. Given a projective variety $Y$ and a $\mathbb Q$-divisor $D$
  such that $(Y,D)$ is a divisorially log terminal
  (=dlt)\footnote{\label{foot:dlt}We refer to \cite[Sect.~2.3]{KM98} for the
    definition of a \emph{dlt} pair, and for related notions concerning
    singularities of pairs that are relevant in minimal model theory.} pair,
  consider the smooth quasi-projective variety
  $$
  Y^\circ := (Y \setminus \supp \lfloor D \rfloor)_{\rm reg}.
  $$
  Lohmann shows that if $(Y,D)$ is log-Fano, that is, if the $\mathbb Q$-divisor
  $-(K_Y+D)$ is ample, then any family of canonically polarized varieties over
  $Y^\circ$ is necessarily isotrivial. The proof relies on a generalization of
  Araujo's result \cite{Ara} which relates extremal rays in the moving cone of a
  variety with fiber spaces that appear at the end of the minimal model
  program. Lohmann shows that the moduli map factorizes through any of the
  fibrations obtained in this way.
\end{example}

\subsubsection{Campana's conjecture}

In a series of papers, including \cite{Cam04, Cam07}, Campana introduced the
notion of ``geometric orbifolds'' and ``special varieties''. Campana's
language helps to formulate a very natural generalisation of
Theorem~\ref{thm:shafarevich}, which includes the cases covered by the Viehweg
Conjecture~\ref{conj:Viehweg}, and gives (at least conjecturally) a
satisfactory geometric explanation of isotriviality observed in some families
over spaces that are not covered by Conjecture~\ref{conj:Viehweg}.

Before formulating the conjecture, we briefly recall the precise definition of
a special logarithmic pair for the reader's convenience. We take the classical
Bogomolov-Sommese Vanishing Theorem as our starting point. We refer to
\cite{Iitaka82, EV92} or to the original reference \cite{Deligne70} for an
explanation of the sheaf $\Omega^p_Y(\log D)$ of logarithmic differentials.

\begin{mythm}[\protect{Bogomolov-Sommese Vanishing Theorem, cf.~\cite[Sect.~6]{EV92}}]\label{thm:classBSv}
  Let $Y$ be a smooth projective variety and $D \subset Y$ a reduced (possibly
  empty) divisor with simple normal crossings. If $p \leq \dim Y$ is any
  number and $\sA \subseteq \Omega^p_Y(\log D)$ any invertible subsheaf, then
  the Kodaira-Iitaka dimension of $\sA$ is at most $p$, i.e., $\kappa(\sA)
  \leq p$. \qed
\end{mythm}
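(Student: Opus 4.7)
The plan is to argue by contradiction. The case $p = \dim Y$ is trivial, so assume $p < \dim Y$ and suppose that there exists an invertible subsheaf $\sA \subseteq \Omega^p_Y(\log D)$ with $\kappa(\sA) \geq p+1$. The strategy is to exploit the positivity of $\sA$ to build, on a suitable cyclic cover, a nonzero logarithmic $p$-form whose existence is incompatible with the logarithmic Akizuki-Kodaira-Nakano vanishing theorem.

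First I would fix $N \gg 0$ such that $\sA^{\otimes N}$ admits a section $s$ with reduced zero divisor $B$; by blowing up if necessary, one may arrange that $B + D$ has simple normal crossings support. The data $(\sA^{\otimes N}, s)$ then determines a cyclic cover $\pi \colon Z \to Y$ of degree $N$, and a log resolution $\tau \colon \hat Z \to Z$ provides a smooth projective variety with an SNC divisor $\hat D \subset \hat Z$ containing the reduced total transform of $D \cup B$. By construction, $(\pi\circ\tau)^{*}\sA$ acquires a tautological section $t$ whose $N$-th power equals $(\pi\circ\tau)^{*}s$.

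Next, the inclusion $\sA \hookrightarrow \Omega^p_Y(\log D) \hookrightarrow \Omega^p_Y(\log(D+B))$, pulled back to $\hat Z$ and composed with the natural map $(\pi\circ\tau)^{*}\Omega^p_Y(\log(D+B)) \to \Omega^p_{\hat Z}(\log \hat D)$, yields an embedding
\begin{equation*}
(\pi\circ\tau)^{*}\sA \hookrightarrow \Omega^p_{\hat Z}(\log \hat D),
\end{equation*}
and the section $t$ produces a corresponding nonzero logarithmic $p$-form $\tilde \omega \in H^0(\hat Z, \Omega^p_{\hat Z}(\log \hat D))$. Since $\kappa(\sA) \geq p+1$, a further Kawamata cover allows one to replace $(\pi\circ\tau)^{*}\sA$ by a line bundle of the form $A \otimes \sO(-E)$ with $A$ ample and $E$ effective, without destroying the previous construction.

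To conclude, one feeds the above embedding into the logarithmic Akizuki-Kodaira-Nakano vanishing theorem, which asserts
\begin{equation*}
H^0\bigl(\hat Z, \, \Omega^p_{\hat Z}(\log \hat D) \otimes A^{-1}\bigr) = 0
\end{equation*}
for $p < \dim \hat Z$ and $A$ ample. Twisting the induced nonzero map $\sO_{\hat Z} \hookrightarrow \Omega^p_{\hat Z}(\log \hat D) \otimes ((\pi\circ\tau)^{*}\sA)^{-1}$ by the effective divisor $E$ and applying vanishing to $A$ produces the required contradiction. The main obstacle is the careful bookkeeping: one has to track orders of vanishing and pole orders along each component of the ramification, branch, and exceptional divisors to verify that the map $(\pi\circ\tau)^{*}\sA \to \Omega^p_{\hat Z}(\log \hat D)$ is everywhere regular, and to convert the Iitaka-positivity of $\sA$ into precisely the amount of ampleness required by the vanishing theorem. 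Once these technical hurdles are cleared, the contradiction is immediate.
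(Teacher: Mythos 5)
The paper does not prove this statement: it is quoted as a classical result with a reference to \cite[Sect.~6]{EV92}, so the only meaningful comparison is with the Esnault--Viehweg argument you are implicitly reconstructing. Your overall strategy --- cyclic covers, the tautological root of the pulled-back section, and a twist of the resulting embedding into $\Omega^p_{\hat Z}(\log \hat D)$ against a logarithmic Akizuki--Kodaira--Nakano vanishing --- is indeed the right family of ideas, and most of the steps you defer to ``bookkeeping'' (reducedness of $B$ after removing the fixed part, injectivity of the composite map on the log resolution, the eigensheaf computation identifying $\pi^*\sA$ with an effective divisor on the cover) are genuinely routine.

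There is, however, one genuine gap, and it sits at the heart of the theorem. The step ``since $\kappa(\sA)\geq p+1$, a further Kawamata cover allows one to replace $(\pi\circ\tau)^*\sA$ by $A\otimes\sO(-E)$ with $A$ ample and $E$ effective'' is only available when $\sA$ is \emph{big}, i.e., when $\kappa(\sA)=\dim Y$. In the intermediate range $p<\kappa(\sA)<\dim Y$ the sheaf $\sA$ is not big, and no finite cover, blow-up, or tensor power produces an ample line bundle from it up to effective twists; Kodaira--Iitaka dimension is preserved under such operations. Consequently your appeal to the vanishing $H^0\bigl(\hat Z,\Omega^p_{\hat Z}(\log\hat D)\otimes A^{-1}\bigr)=0$ for $A$ ample only rules out \emph{big} invertible subsheaves of $\Omega^p_Y(\log D)$ for $p<\dim Y$, which is strictly weaker than the assertion $\kappa(\sA)\leq p$. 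The missing ingredient is the reduction of the intermediate case to the big case, which in \cite{EV92} is built into the formulation of their vanishing theorem in terms of $\kappa(\sA)$ rather than ampleness: one passes to the Iitaka fibration of $\sA$ (equivalently, analyses the cyclic cover fibered over the $\kappa(\sA)$-dimensional Iitaka image), and this is where the bound by $p$ --- rather than by $\dim Y$ --- actually comes from. Note also that the applications in this survey (Proposition~\ref{prop:VZec1} and Claim~\ref{claim:55}) only invoke the theorem for ample or $\mathbb Q$-ample subsheaves, so your argument would suffice there, but not for the theorem as stated.
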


In a nutshell, we say that a pair $(Y, D)$ is
special if the inequality in the Bogomolov-Sommese Vanishing Theorem is always
strict.

\begin{defn}[Special logarithmic pair]\label{def:speciallog}
  In the setup of Theorem~\ref{thm:classBSv}, a pair $(Y, D)$ is called
  \emph{special} if the strict inequality $\kappa(\sA) < p$ holds for all $p$
  and all invertible sheaves $\sA \subseteq \Omega^p_Y(\log D)$. A smooth,
  quasi-projective variety $Y^\circ$ is called special if there exists a
  smooth compactification $Y$ such that $D:= Y \setminus Y^\circ$ is a divisor
  with simple normal crossings and such that the pair $(Y,D)$ is special.
\end{defn}

\begin{rem}[Special quasi-projective variety]
  It is an elementary fact that if $Y^\circ$ is a smooth, quasi-projective
  variety and $Y_1$, $Y_2$ two smooth compactifications such that $D_i := Y_i
  \setminus Y^\circ$ are divisors with simple normal crossings, then $(Y_1,
  D_1)$ is special if and only if $(Y_2, D_2)$ is. The notion of special
  should thus be seen as a property of the quasi-projective variety $Y^\circ$.
\end{rem}

\begin{fact}[\protect{Examples of special manifolds, cf.~\cite[Thms.~3.22 and 5.1]{Cam04}}]
  Rationally connected manifolds and manifolds $X$ with $\kappa(X) = 0$ are
  special. \qed
\end{fact}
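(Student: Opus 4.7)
My plan is to verify the strict inequality of Definition~\ref{def:speciallog} directly, case by case: for every $1 \leq p \leq \dim X$ and every invertible subsheaf $\sA \subseteq \Omega^p_X$ I need to show $\kappa(\sA) < p$ (the case $p = 0$ is either vacuous or automatic, since a proper invertible subsheaf of $\sO_X$ has Kodaira dimension $-\infty$). For the rationally connected case I would appeal to the classical theorem of Kollár--Miyaoka--Mori, that on a smooth projective rationally connected variety one has $H^0\bigl(X, (\Omega^1_X)^{\otimes N}\bigr) = 0$ for every $N \geq 1$. The underlying geometric idea is to restrict to a very free rational curve $f\colon \mathbb P^1 \to X$ through a general point: the bundle $f^*\Omega^1_X$ splits as a sum of line bundles of strictly negative degree, so no tensor power has sections, and since very free curves cover $X$ any global tensor must vanish identically. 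In characteristic zero $\Omega^p_X$ embeds as a direct summand of $(\Omega^1_X)^{\otimes p}$ via antisymmetrisation; hence for any $\sA \subseteq \Omega^p_X$ and $m \geq 1$ there is a chain of inclusions $\sA^{\otimes m} \hookrightarrow (\Omega^p_X)^{\otimes m} \hookrightarrow (\Omega^1_X)^{\otimes pm}$, forcing $H^0(X, \sA^{\otimes m}) = 0$ and hence $\kappa(\sA) = -\infty < p$.

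For the case $\kappa(X) = 0$ I would argue by contradiction. Theorem~\ref{thm:classBSv} always gives $\kappa(\sA) \leq p$, so if $X$ were not special then some $\sA \subseteq \Omega^p_X$ would realise $\kappa(\sA) = p$ for some $p \geq 1$; after saturation such an $\sA$ is a \emph{Bogomolov sheaf}. The crucial structural input---Bogomolov's theorem for $p = 1$, extended by Campana to arbitrary $p$---produces from $\sA$ a dominant meromorphic fibration $\phi \colon X \dashrightarrow Z$ with $\dim Z = p$ whose base admits a smooth birational model of general type. Applying the Iitaka subadditivity theorem $C_{n,p}$ in the (unconditionally known) case of a base of general type, due to Viehweg, I would then deduce
\[
\kappa(X) \;\geq\; \kappa(F) + \kappa(Z) \;\geq\; 0 + p \;\geq\; 1,
\]
where $F$ denotes a general fibre of $\phi$. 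This contradicts $\kappa(X) = 0$, so no such $\sA$ can exist.

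The main obstacle lies entirely in the second half. The rationally connected part only packages KMM with an elementary exterior-algebra embedding and is essentially a reference. By contrast, the heart of the $\kappa(X)=0$ argument---that a saturated invertible subsheaf of $\Omega^p_X$ of maximal Kodaira dimension $p$ is the pullback of a canonical bundle from a $p$-dimensional base of general type---is the nontrivial Bogomolov--Campana structure theorem, whose proof rests on weak positivity of direct images à la Viehweg. Once this is granted the additivity step is standard because the base has maximal Kodaira dimension, and no general form of the Iitaka conjecture is needed.
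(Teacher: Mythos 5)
The paper offers no argument for this Fact---it is quoted from Campana with the reference [Cam04, Thms.~3.22 and 5.1] and a \emph{qed}---so the comparison to make is with Campana's own proofs. Your first half (rationally connected $\Rightarrow$ special) is correct and is essentially Campana's argument: Kollár--Miyaoka--Mori gives $H^0\bigl(X,(\Omega^1_X)^{\otimes N}\bigr)=0$, and the chain $\sA^{\otimes m}\hookrightarrow(\Omega^1_X)^{\otimes pm}$ then kills all sections of all powers of any invertible $\sA\subseteq\Omega^p_X$, whence $\kappa(\sA)=-\infty<p$.

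The second half has a genuine gap. The structure theorem you attribute to Bogomolov--Campana is false as you state it. If $\sA\subseteq\Omega^p_X$ is saturated, invertible and $\kappa(\sA)=p$, its Iitaka fibration $\phi\colon X\dasharrow Z$ does have $\dim Z=p$, but $\sA$ coincides with $\phi^*K_Z$ only generically: saturation adds an effective divisor supported on the multiple and degenerate fibres, and it is only the \emph{orbifold} base $\bigl(Z,\Delta(\phi)\bigr)$---with $\Delta(\phi)$ recording the fibre multiplicities---that is of general type, not $Z$ itself. An elliptic surface over $\mathbb P^1$ with sufficiently many multiple fibres carries a Bogomolov sheaf in $\Omega^1_X$ even though its base is $\mathbb P^1$. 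Consequently Viehweg's additivity theorem over a base of general type cannot be applied as you propose; what is needed is Campana's orbifold additivity $\kappa(X)\geq\kappa(F)+\kappa\bigl(Z,K_Z+\Delta(\phi)\bigr)$ for fibrations of general type, which is the actual technical heart of the proof of [Cam04, Thm.~5.1] (it again rests on weak positivity of direct images, but is not ``standard'' in the form you invoke). A smaller omission: you need $\kappa(F)\geq 0$ before concluding, which does follow from easy addition $\kappa(X)\leq\kappa(F)+\dim Z$ together with $\kappa(X)=0$, but should be said. In short, your outline is morally the route Campana takes, but the step from ``Bogomolov sheaf'' to ``base of general type'' is exactly where the orbifold formalism becomes unavoidable.
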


With this notation in place, Campana's conjecture can be formulated as
follows.

\begin{conj}[\protect{Campana's conjecture, \cite[Conj.~12.19]{Cam07}}]\label{conj:campana}
  Let $f: X^\circ \to Y^\circ$ be a smooth family of canonically polarised
  varieties over a smooth quasi-projective base. If $Y^\circ$ is special, then
  the family $f$ is isotrivial.
\end{conj}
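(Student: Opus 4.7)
The plan is to prove the contrapositive: assuming $f: X^\circ \to Y^\circ$ has positive variation, produce an invertible subsheaf of a (pluri-)logarithmic tensor on a smooth compactification of $Y^\circ$ whose Kodaira--Iitaka dimension equals its cotangent degree, contradicting the defining inequality of a special pair. The outline divides into three steps, followed by identification of the main obstacle.

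First, choose a smooth compactification $Y$ of $Y^\circ$ with $D := Y \setminus Y^\circ$ a divisor with simple normal crossings. Invoking the positivity results for moduli of canonically polarised manifolds and the Viehweg--Zuo construction of pluri-differential forms (to be introduced in Section~\ref{sec:VZ}), one obtains an integer $p$ with $1 \leq p \leq \dim Y^\circ$, a positive integer $N$, and an invertible subsheaf
$$
\sA \subseteq \Sym^N \Omega^p_Y(\log D)
$$
with $\kappa(\sA) \geq \Var(f) > 0$. This step uses crucially that the fibres of $f$ are canonically polarised, so that $f_* \omega_{X/Y}^{\otimes \nu}$ carries strong positivity and the moduli map dominates an ample class on the moduli space.

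Second, derive a contradiction from the existence of $\sA$ under specialness. Definition~\ref{def:speciallog}, as stated, controls invertible subsheaves of $\Omega^p_Y(\log D)$, whereas the Viehweg--Zuo sheaf lives inside a symmetric power. One must therefore either (i)~strengthen the Bogomolov--Sommese Vanishing Theorem~\ref{thm:classBSv} to symmetric powers, establishing $\kappa(\sA) \leq p$ for all invertible $\sA \subseteq \Sym^N \Omega^p_Y(\log D)$ with strict inequality when $(Y,D)$ is special, or (ii)~pass to Campana's orbifold formulation of ``special'' in which symmetric powers appear directly. Either route closes the argument by ruling out the sheaf produced in the previous step. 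In small dimensions, an alternative is available: by Theorem~\ref{thm:mainresult2} any run of the MMP of $(Y, D)$ factors the moduli map through a Kodaira or Mori fibre space, and one can hope to show that specialness of $(Y,D)$ forces the image to be a point. The extension theorems for reflexive differential forms on singular spaces (Section~\ref{sec:ext}) are essential for transporting $\sA$ across the birational steps of the MMP without losing the positivity estimate.

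The main obstacle, and the reason Conjecture~\ref{conj:campana} remains open in full generality, is Step~2 in its unconditional form: the Bogomolov--Sommese type vanishing for \emph{symmetric powers} of $\Omega^p_Y(\log D)$ on a special pair is essentially the orbifold Bogomolov--Sommese conjecture of Campana, and is known only in partial or low-dimensional settings. A secondary difficulty is that a special variety may have $\kappa(Y^\circ) = -\infty$ (for instance, if $Y^\circ$ is rationally connected), so no Iitaka fibration is available to reduce the dimension, and the contradiction must be extracted purely from the pluri-differential form produced in Step~1 together with its compatibility with the MMP of $(Y, D)$.
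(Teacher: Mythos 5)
The statement here is a conjecture; the paper does not prove it in general, and your identification of the symmetric-power Bogomolov--Sommese problem as the central unconditional obstruction is consistent with the paper's own presentation. Two points of comparison with what the paper actually does for the known cases. First, a small inaccuracy in your Step~1: the Viehweg--Zuo Theorem~\ref{thm:VZ} produces an invertible sheaf inside $\Sym^m \Omega^1_Y(\log D)$, a symmetric power of the sheaf of \emph{one}-forms; there is no general exterior degree $p$ in its output, so the tension is specifically between $\Sym^m\Omega^1$ and the rank-one subsheaves of $\Omega^p$ that Definition~\ref{def:speciallog} controls.

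Second, and more substantively, the device the paper uses to settle the cases $\dim Y^\circ \leq 2$ (and, in \cite{JKSpecialBaseManifolds}, $\dim Y^\circ \leq 3$) is designed precisely to circumvent the obstacle you name, and it is missing from your outline: the refined Viehweg--Zuo Theorem~\ref{thm:VZimproved} places the sheaf $\sA$ inside $\Sym^m \mathcal B$, where $\mathcal B \subseteq \Omega^1_Y(\log D)$ is the saturation of the image of the differential of the moduli map, a reflexive sheaf whose rank equals $\Var(f^\circ)$. One first applies the solution of Viehweg's conjecture in low dimension (Theorem~\ref{thm:mainresult0}) to conclude $\Var(f^\circ) < \dim Y^\circ$, using only that a special variety is not of log general type. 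In the surface case this forces $\Var(f^\circ) \leq 1$; if $\Var(f^\circ) = 1$, then $\mathcal B$ is an invertible subsheaf of $\Omega^1_Y(\log D)$ and $\Sym^m \mathcal B = \mathcal B^{\otimes m}$, so $\kappa(\mathcal B) \geq \kappa(\sA) \geq 1$ contradicts the specialness inequality $\kappa(\mathcal B) < 1$ directly, with no symmetric-power vanishing theorem needed. In other words, the key idea is to shrink the ambient sheaf from $\Sym^m \Omega^1_Y(\log D)$ to $\Sym^m \mathcal B$ with $\operatorname{rank} \mathcal B = \Var(f^\circ)$, and then to exploit the dimension gap between $\Var(f^\circ)$ and $\dim Y^\circ$ guaranteed by Viehweg's conjecture. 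Any serious attempt at Conjecture~\ref{conj:campana} should therefore start from Theorem~\ref{thm:VZimproved} rather than Theorem~\ref{thm:VZ}; your route (i), the orbifold Bogomolov--Sommese statement for symmetric powers, is strictly harder than what the known proofs require.
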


In analogy with the construction of the maximally rationally connected
quotient map of uniruled varieties, Campana constructs in
\cite[Sect.~3]{Cam04} an almost-holomorphic ``core map'' whose fibres are
special in the sense of Definition~\ref{def:speciallog}. Like the MRC
quotient, the core map is uniquely characterised by certain maximality
properties, \cite[Thm.~3.3]{Cam04}, which essentially say that the core map of
$X$ contracts almost all special subvarieties contained in $X$. If Campana's
Conjecture~\ref{conj:campana} holds, this would imply that the core map always
factors the moduli map, similar to what we have seen in
Section~\ref{ssec:ViehwegConj} above,
$$
\xymatrix{ %
  Y \ar@{-->}[rrrr]^{\text{core map}}_{\text{almost holomorphic}}
  \ar@{-->}[d]_{\text{moduli map induced by } f^\circ} &&&&
  Z \ar@{-->}@/^0.4cm/[dllll]^{\exists !} \\
  \mathfrak M.}
$$

As with Viehweg's Conjecture~\ref{conj:Viehweg}, Campana's
Conjecture~\ref{conj:campana} has been shown for surfaces \cite{JK09} and
threefolds \cite{JKSpecialBaseManifolds}.

\begin{mythm}[\protect{Campana's conjecture in dimension three, \cite[Thm.~1.5]{JKSpecialBaseManifolds}}]\label{thm:main}
  Campana's Conjecture~\ref{conj:campana} holds if $\dim Y^\circ \leq 3$. \qed
\end{mythm}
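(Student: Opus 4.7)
\begin{idea}
The plan is to argue by contradiction and to reduce Campana's Conjecture to Viehweg's Conjecture, which is available in dimension at most three by Theorem~\ref{thm:mainresult0}. Assume that $f^\circ$ is not isotrivial and set $v := \Var(f^\circ) \geq 1$; fix a smooth compactification $Y$ of $Y^\circ$ with $D := Y \setminus Y^\circ$ snc. I aim to exhibit an invertible subsheaf $\sA \subseteq \Omega^v_Y(\log D)$ with $\kappa(\sA) = v$. By Definition~\ref{def:speciallog} such a sheaf contradicts the specialness of the pair $(Y,D)$, hence of $Y^\circ$, and so forces $v = 0$.

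To construct $\sA$ I would first use the extension techniques for moduli maps recalled in Section~\ref{sec:ext}, together with Hironaka resolution, to pass to a suitable birational model on which the moduli map $\mu^\circ : Y^\circ \to \mathfrak M$ extends to a morphism $\mu : Y \to \overline{\mathfrak M}$ whose image has dimension $v$. After Stein factorisation and further blow-ups I may assume that $\mu$ factors through a surjective morphism $\pi : Y \to Z$ with connected fibres onto a smooth projective variety $Z$ of dimension $v \leq 3$, endowed with an snc boundary $D_Z \subset Z$ such that $\pi^{-1}(D_Z) \subseteq D$ and such that $\pi$ is log smooth outside $D_Z$. Over $Z^\circ := Z \setminus D_Z$ the family $f^\circ$ descends to a smooth projective family of canonically polarised varieties of maximal variation $\dim Z = v$, so Theorem~\ref{thm:mainresult0} applies and yields that $(Z, D_Z)$ is of log general type; equivalently, $\omega_Z(\log D_Z)$ is big.

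The logarithmic cotangent sequence of the log smooth morphism $\pi$ realises $\pi^* \Omega^1_Z(\log D_Z)$ as a locally split subbundle of $\Omega^1_Y(\log D)$, and taking $v$-th exterior powers yields an inclusion of invertible sheaves
$$
\pi^* \omega_Z(\log D_Z) \hookrightarrow \Omega^v_Y(\log D).
$$
Since $\pi$ is surjective with connected fibres and $\omega_Z(\log D_Z)$ is big, the pulled-back line bundle has Kodaira-Iitaka dimension equal to $v$; this provides the forbidden subsheaf $\sA$ and produces the desired contradiction.

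The main obstacle is the very first step, namely extending the moduli map to a morphism $\mu : Y \to \overline{\mathfrak M}$ and arranging the log smooth fibration $\pi : Y \to Z$ together with a compatible snc boundary. This demands a careful interplay between the extension results for moduli maps discussed in Section~\ref{sec:ext}, the positivity results on moduli recalled in Section~\ref{sec:VZ}, and repeated log resolution. One must further guarantee that a genuine family of canonically polarised varieties with maximal variation is available on $Z^\circ$, which in general requires passage to a finite cover of $Z^\circ$ before Viehweg's Conjecture can be invoked on the new base.
\end{idea}
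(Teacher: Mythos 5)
Your overall template --- contradict specialness by exhibiting an invertible $\sA \subseteq \Omega^v_Y(\log D)$ with $\kappa(\sA) = v = \Var(f^\circ)$ --- is indeed the right one, and it is the template the paper itself illustrates in the two-dimensional case. But the route you take to produce $\sA$ has a genuine gap at its central step: the family $f^\circ$ does \emph{not} descend to $Z^\circ$, nor to any étale cover of it. A morphism to the coarse moduli space $\mathfrak M$ carries no family with it, and the restriction of $f^\circ$ to a fibre of $\pi$ is isotrivial but not trivial. The only general workaround is the one you mention in passing --- restrict $f^\circ$ to a multisection $T^\circ \to Z^\circ$, which is finite but typically ramified --- and this is fatal to the argument: Viehweg's conjecture then gives that $T^\circ$ is of log general type, and log general type does \emph{not} descend along finite ramified covers (compare $\mathbb P^1$ as a quotient of a genus-two curve, or $\mathbb A^1$ under $z \mapsto z^n$ branched at two points). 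So you cannot conclude that $(Z, D_Z)$ is of log general type, and the invertible sheaf $\pi^* \omega_Z(\log D_Z) \hookrightarrow \Omega^v_Y(\log D)$ --- whose construction is otherwise fine --- need not be big. This failure of descent is precisely the phenomenon Campana's orbifold base $(Z, \Delta_Z)$ is designed to record, and it is why Campana's conjecture is strictly harder than, and not a formal consequence of, Viehweg's conjecture.

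The actual proof avoids descending the family altogether. The refined Viehweg--Zuo theorem (Theorem~\ref{thm:VZimproved}) produces, directly on $Y$, an invertible $\sA \subseteq \Sym^m \mathcal B$ with $\kappa(\sA) \geq \Var(f^\circ)$, where $\mathcal B \subseteq \Omega^1_Y(\log D)$ is the saturation of $\Image(d\mu)$ --- exactly the intrinsic substitute for your $\pi^*\Omega^1_Z(\log D_Z)$, with the positivity supplied by the Viehweg--Zuo construction rather than by an application of Viehweg's conjecture downstairs. In dimension two this finishes immediately ($\mathcal B$ has rank one, so it is an invertible subsheaf of $\Omega^1_Y(\log D)$ of Kodaira--Iitaka dimension $\geq 1$, contradicting specialness); in dimension three one must still transfer the positivity of $\sA \subseteq \Sym^m\mathcal B$ to $\det \mathcal B \subseteq \Omega^{[v]}(\log)$ on a suitable model, which requires the minimal model program for $(Y,D)$ and the machinery for reflexive differentials on dlt pairs from Section~\ref{sec:ext}. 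I would encourage you to rework your argument with $\mathcal B$ in place of the fibration $\pi$.
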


\subsection{Conjectures and open problems}

Viehweg's Conjecture~\ref{conj:Viehweg} and Campana's
Conjecture~\ref{conj:campana} have been shown for families over base manifolds
of dimension three or less. As we will see in Section~\ref{sec:ideaOfProof},
the restriction to three-dimensional base manifolds comes from the fact that
minimal model theory is particularly well-developed for threefolds, and from
our limited ability to handle differential forms on singular spaces of higher
dimension. We do not believe that there is a fundamental reason that restricts
us to dimension three, and we do believe that the relationship between the
moduli map and the MMP found in Theorem~\ref{thm:mainresult2} will hold in
arbitrary dimension.

\begin{conj}[Relationship between the moduli map and the MMP]\label{conj:mainresult2}
  Let $f^\circ: X^\circ \to Y^\circ$ be a smooth projective family of
  canonically polarised varieties, over a quasi-projective manifold
  $Y^\circ$. Let $Y$ be a smooth compactification of $Y^\circ$ such that $D :=
  Y \setminus Y^\circ$ is a divisor with simple normal crossings. Then any
  run of the minimal model program of the pair $(Y,D)$ will terminate in a
  Kodaira or Mori fibre space whose fibration factors the moduli map
  birationally.
\end{conj}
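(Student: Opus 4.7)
The plan is to extend the three-dimensional argument of Theorem~\ref{thm:mainresult2} to arbitrary dimension by induction along a run of the logarithmic minimal model program on the pair $(Y,D)$. Concretely, let
\[
\lambda:(Y,D)=(Y_0,D_0)\dashrightarrow(Y_1,D_1)\dashrightarrow\cdots\dashrightarrow(Y_n,D_n)
\]
be a sequence of $(K_Y+D)$-divisorial contractions and flips, followed by a final Kodaira or Mori fibre contraction $\phi:Y_n\to Z_n$. I would prove, one elementary step at a time, that the rational moduli map $\mu:Y\dashrightarrow\mathfrak{M}$ induced by $f^\circ$ descends to every intermediate model $Y_i$, and that the resulting map on $Y_n$ factors through $\phi$; this is precisely the factorisation claimed in the conjecture.

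\textbf{Strategy of the induction.} The first key tool is the Viehweg-Zuo construction of Section~\ref{sec:VZ}: as soon as the variation of the family is positive, one obtains on the log smooth base an invertible subsheaf of $\Sym^N\Omega^1_Y(\log D)$ whose Kodaira-Iitaka dimension is bounded below by $\Var(f^\circ)$. The second key tool is the extension theory of Section~\ref{sec:ext}, which allows reflexive logarithmic differentials to extend across the exceptional locus of any log resolution of a dlt pair. Combining them, at each intermediate dlt pair $(Y_i,D_i)$ one produces a Viehweg-Zuo-type invertible subsheaf $\sA_i$ of the reflexive hull of $\Sym^N\Omega^1_{Y_i}(\log D_i)$ of Kodaira-Iitaka dimension at least $\Var(f^\circ)$. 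Applied on the general fibre of the final contraction $\phi:Y_n\to Z_n$, a Bogomolov-Sommese vanishing theorem for dlt pairs then forces the variation along those fibres to vanish, which is exactly the desired factorisation of the moduli map through $\phi$. For divisorial contractions $Y_i\to Y_{i+1}$ the argument is the same in spirit: the Viehweg-Zuo sheaf restricted to a general fibre of the contraction, combined with the singular Bogomolov-Sommese estimate on that fibre, shows that the moduli map is constant along the contracted locus, and therefore descends.

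\textbf{Main obstacle.} The essential difficulty is concentrated entirely in the analytic and birational-geometric input on \emph{singular} spaces in dimension $\geq 4$. Specifically, the argument requires an extension theorem for reflexive logarithmic differentials on dlt pairs in arbitrary dimension, together with a corresponding Bogomolov-Sommese vanishing statement for such reflexive sheaves; in dimension three these are established in the works summarised in Section~\ref{sec:ext}, but the proofs rely on a rather detailed understanding of surface log canonical singularities that is not presently available in higher dimensions. A logically independent obstacle is that the plan presupposes that the logarithmic MMP of $(Y,D)$ terminates and that abundance holds for the resulting log minimal model, so that a Kodaira or Mori fibre space actually appears at the end; both are open beyond dimension three. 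Granting these standard MMP conjectures together with a dlt extension theorem and a dlt Bogomolov-Sommese vanishing theorem in arbitrary dimension, the three-dimensional proof of \cite{KK08c} should extend with essentially no change.
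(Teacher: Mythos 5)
The statement you are proving is a \emph{conjecture} in this paper: the author explicitly presents Conjecture~\ref{conj:mainresult2} as open in dimension $\geq 4$ and offers no proof, only the remark that a proof ``no longer seems out of reach''. So there is no argument in the paper to compare yours against; what you have written is a conditional strategy, and you correctly present it as such. Your strategy is indeed the one the paper itself develops for $\dim Y^\circ \leq 3$ (Sections~\ref{sec:VZ}--\ref{sec:ideaOfProof}): produce a Viehweg--Zuo sheaf on $(Y,D)$, push it forward along the log MMP via Proposition~\ref{lem:pushdownA}, and use the Harder--Narasimhan filtration together with Bogomolov--Sommese vanishing on the singular models to exhibit and exploit the terminal fibre space structure.

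Two points in your assessment of the obstacles need correction. First, you locate the dimension restriction in the extension theory of Section~\ref{sec:ext}, but the pull-back theorem (Theorem~\ref{thm:generalpullback}) and the Bogomolov--Sommese vanishing for log canonical pairs (Corollary~\ref{cor:BS}) are stated and proved in \cite{GKKP10} in \emph{arbitrary} dimension; the genuine remaining obstruction is entirely on the MMP side (existence/termination of the log MMP and abundance beyond dimension three), as the paper's concluding remark in Section~2.3 indicates. Second, your mechanism for the final step --- restricting the Viehweg--Zuo sheaf itself to a general fibre of $\phi: Y_n \to Z_n$ or of a divisorial contraction and applying Bogomolov--Sommese there --- runs into a problem the paper explicitly flags as open (Section~3.3): the Viehweg--Zuo construction is not known to be compatible with base change, so the restriction of $\sA_n$ to a fibre need not be a Viehweg--Zuo sheaf for the restricted family, and need not retain any positivity. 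The paper's own argument in Section~\ref{ssec:fiberspace} avoids this: it uses the Viehweg--Zuo sheaf on $Y_\mu$ only to show $\rho(Y_\mu) > 1$ via Corollary~\ref{cor:BS}, and then restricts the \emph{family} $f^\circ$ to the strict transform of a general fibre and invokes the lower-dimensional case of the conjecture inductively. Moreover, to get the factorisation of the moduli map (rather than merely a bound on the variation) one needs the refined Theorem~\ref{thm:VZimproved}, where $\sA \subseteq \Sym^m \mathcal B$ with $\mathcal B$ the saturation of $\Image(d\mu)$; your outline does not use this refinement, and without it the argument only controls $\Var(f^\circ)$, not the fibres of the moduli map.
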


\begin{conj}[\protect{Refined Viehweg conjecture, cf.~\cite[Conj.~1.6]{KK08}}]\label{conj:mainresult3}
  Corollary~\ref{cor:mainresult1} holds without the assumption that $\dim
  Y^\circ \leq 3$.
\end{conj}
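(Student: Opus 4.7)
The plan is to deduce Conjecture~\ref{conj:mainresult3} from the stronger Conjecture~\ref{conj:mainresult2} exactly as Corollary~\ref{cor:mainresult1} was deduced from Theorem~\ref{thm:mainresult2}: once one knows that every run of the log MMP of $(Y,D)$ terminates in a Kodaira or Mori fibre space $Y_\lambda \to Z_\lambda$ through which the moduli map factors birationally, the variation of $f^\circ$ is bounded by $\dim Z_\lambda$, which in the dlt log-minimal case equals $\kappa(Y^\circ)$ and in the Mori case is strictly less than $\dim Y^\circ$. So the real problem is to prove Conjecture~\ref{conj:mainresult2} in arbitrary dimension, and I will concentrate on that.

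Granting the standard conjectures of log minimal model theory (existence and termination of the $(K_Y+D)$-MMP, and log abundance), I would try to imitate the three-dimensional proof of Theorem~\ref{thm:mainresult2}. First, I would use the Viehweg--Zuo construction discussed in Section~\ref{sec:VZ} to obtain, for every sufficiently high divisible power $N$, an invertible subsheaf $\mathcal A \subseteq \Sym^N \Omega^1_Y(\log D)$ whose Kodaira--Iitaka dimension satisfies $\kappa(\mathcal A) \geq \Var(f^\circ)$. Next, I would run the $(K_Y+D)$-MMP to produce a birational map $\lambda : (Y,D) \dasharrow (Y_\lambda, D_\lambda)$, where $(Y_\lambda, D_\lambda)$ is dlt. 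The key is to propagate the Viehweg--Zuo sheaf $\mathcal A$ through every divisorial contraction and flip, so as to obtain a reflexive rank-one subsheaf $\mathcal A_\lambda \subseteq (\Sym^N \Omega^1_{Y_\lambda}(\log D_\lambda))^{**}$ with $\kappa(\mathcal A_\lambda) \geq \kappa(\mathcal A)$. Finally, if the MMP terminates in a Mori (or Kodaira) fibre space $\varphi : Y_\lambda \to Z_\lambda$, a logarithmic Bogomolov--Sommese vanishing statement on the singular pair $(Y_\lambda, D_\lambda)$ should force $\mathcal A_\lambda$ to come from $Z_\lambda$, which in turn forces the moduli map to factor through $\varphi$ birationally.

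The hard part, and the only genuinely new input needed beyond the conjectures of minimal model theory, is the handling of reflexive logarithmic pluri-forms on dlt pairs of arbitrary dimension. Two technical ingredients are required: a pluri-form extension theorem of the shape ``for every resolution $\pi : \widetilde Y \to Y_\lambda$ of a dlt pair, every logarithmic reflexive symmetric differential on $(Y_\lambda,D_\lambda)$ pulls back to a logarithmic symmetric differential on $(\widetilde Y, \widetilde D)$'', generalising the extension results surveyed in Section~\ref{sec:ext} from $\Omega^p$ to $\Sym^N \Omega^p$; and a logarithmic Bogomolov--Sommese vanishing theorem for reflexive symmetric differentials on dlt pairs, bounding $\kappa(\mathcal A_\lambda)$ strictly below the relative dimension of $\varphi$ unless $\mathcal A_\lambda$ is pulled back from $Z_\lambda$. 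In dimension three both ingredients are available; in higher dimensions the pluri-form version is, to my knowledge, open, and proving it is where I expect the real work to lie, presumably by combining the cyclic-cover trick with the known extension theorem for $\Omega^1$ on dlt pairs, plus a careful local analysis at the codimension-two strata of $\lfloor D_\lambda\rfloor$.

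A secondary obstacle is that the Viehweg--Zuo sheaves are a priori defined only after a suitable birational modification of the base $Y$, so to start the MMP one must first reduce to a log-smooth model on which $\mathcal A$ lives; this reduction is harmless but must be tracked carefully through the MMP so that the factoring statement for the moduli map descends to the original $Y^\circ$. Assuming all these pieces fall into place, the conclusion of Conjecture~\ref{conj:mainresult3} follows by the same dichotomy as in Corollary~\ref{cor:mainresult1}: the log-minimal endpoint contributes case (ii), the Mori endpoint contributes case (i), and isotriviality of $f^\circ$ when $\kappa(Y^\circ)=0$ is recovered as the degenerate instance of (ii).
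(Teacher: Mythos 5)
The statement you are asked to prove is Conjecture~\ref{conj:mainresult3}, which the paper states precisely as an \emph{open conjecture}: there is no proof of it in the paper, and the author only remarks that a proof ``does no longer seem out of reach''. Your proposal is accordingly not a proof but a research programme, and you say so yourself. The essential gaps are exactly the ones you flag: (a) you assume the standard conjectures of log minimal model theory (existence, termination, abundance) in arbitrary dimension, which are open; (b) you reduce to Conjecture~\ref{conj:mainresult2}, which is likewise stated in the paper as open; and (c) the differential-form machinery you invoke on the singular models is not available in the generality you need. Since none of these inputs is established, the argument does not close, and the conjecture remains unproved by your proposal.

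That said, the strategy you outline is essentially the one the paper itself advocates (Sections~\ref{sec:VZ}--\ref{sec:ideaOfProof}): Viehweg--Zuo sheaves, push-forward through the MMP via Proposition~\ref{lem:pushdownA}, Bogomolov--Sommese vanishing on the singular endpoint, and induction on the dimension of the fibre space base. One correction on where the genuine difficulty sits: in the paper's actual three-dimensional argument the Viehweg--Zuo sheaf in $\Sym^{[m]}\Omega^1_{Y_\lambda}(\log D_\lambda)$ is used only to destabilise $\Omega^{[1]}_{Y_\lambda}(\log D_\lambda)$ via the Harder--Narasimhan filtration; the determinant of the destabilising subsheaf then lands in $\Omega^{[r]}_{Y_\lambda}(\log D_\lambda)$, and the vanishing applied is Corollary~\ref{cor:BS} for reflexive $p$-forms on lc pairs, which by \cite{GKKP10} holds in \emph{all} dimensions. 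So the ``pluri-form extension theorem'' you single out as the main obstacle is not actually needed on this route --- and the paper explicitly warns (open problems of Section~\ref{sec:ext}, citing \cite[Ex.~3.1.3]{GKK08}) that the naive pluri-form generalisation of Theorem~\ref{thm:generalpullback} is false, so building the proof on it would be a dead end. The bottlenecks in arbitrary dimension are instead the MMP conjectures, the $\mathbb Q$-Cartier hypothesis in Corollary~\ref{cor:BS} (the destabilising determinant need not be $\mathbb Q$-Cartier when $Y_\mu$ is not $\mathbb Q$-factorial along the induction), and the inductive descent to lower-dimensional fibres.
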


Given the current progress in minimal model theory, a proof of
Conjectures~\ref{conj:mainresult2} and \ref{conj:mainresult3} does no longer
seem out of reach.

\section{Techniques I: Existence of Pluri-differentials on the base of a family}
\label{sec:VZ}

\subsection{The existence result}

Throughout the present Section~\ref{sec:VZ}, we consider a smooth projective
family $f^\circ: X^\circ \to Y^\circ$ of projective, canonically polarised
complex manifolds, over a smooth complex quasi-projective base. We assume that
the family is not isotrivial, and fix a smooth projective compactification $Y$
of $Y^\circ$ such that $D := Y \setminus Y^\circ$ is a divisor with simple
normal crossings. In this setup, Viehweg and Zuo have shown the following
fundamental result asserting the existence of many logarithmic
pluri-differentials on $Y$.

\begin{mythm}[Existence of pluri-differentials on $Y$, \protect{\cite[Thm.~1.4(i)]{VZ02}}]\label{thm:VZ}
  Let $f^\circ: X^\circ \to Y^\circ$ be a smooth projective family of
  canonically polarised complex manifolds, over a smooth complex
  quasi-projective base.  Assume that the family is not isotrivial and fix a
  smooth projective compactification $Y$ of $Y^\circ$ such that $D := Y
  \setminus Y^\circ$ is a divisor with simple normal crossings.

  Then there exists a number $m > 0$ and an invertible sheaf $\sA \subseteq
  \Sym^m \Omega^1_Y (\log D)$ whose Kodaira-Iitaka dimension is at least the
  variation of the family, $\kappa(\sA) \geq \Var(f^\circ)$. \qed
\end{mythm}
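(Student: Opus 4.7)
The plan is to combine two classical streams of ideas. First, the positivity theory of direct images $f_*\omega_{X/Y}^{\otimes \nu}$ developed by Viehweg, Kawamata, and Kollár translates the variation of $f^\circ$ into the bigness of a determinantal line bundle on $Y$. Second, a Hodge-theoretic machine built from an auxiliary variation of Hodge structure converts that positivity into logarithmic symmetric differentials on $Y$. The final line bundle $\mathcal{A}$ is extracted by iterating the Kodaira--Spencer map of the auxiliary VHS against the positive sheaf produced in the first step.

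First I would fix a smooth compactification $f \colon X \to Y$ of $f^\circ$ such that $f^{-1}(D)$ is a reduced simple normal crossings divisor. Since the fibres of $f^\circ$ are canonically polarised, Viehweg's weak positivity for $f_*\omega_{X/Y}^{\otimes \nu}$ combined with the existence of a quasi-projective coarse moduli space yields, for $\nu \gg 0$, that the line bundle $\mathcal{L} := \det f_*\omega_{X/Y}^{\otimes \nu}$ satisfies $\kappa(Y, \mathcal{L}) \geq \Var(f^\circ)$. Next, for $N \gg 0$, I would choose a smooth divisor $H \in |\mathcal{L}^{\otimes N}|$ in sufficiently general position so that its pullback to $X$, together with $f^{-1}(D)$, supports an $N$-th cyclic cover $\tilde{X} \to X$ that, after log-resolution and semistable reduction, yields a family $\tilde{f} \colon \tilde{X} \to Y$ which is smooth and projective over $Y^\circ$ (possibly after a birational modification of $Y$, which is harmless for the conclusion).

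The local system $R^k \tilde{f}_* \mathbb{C}$ then underlies a polarised variation of Hodge structure on $Y^\circ$, and Schmid's nilpotent orbit theorem together with Deligne's canonical extension produce a logarithmic Hodge bundle $\overline{E} = \bigoplus_{p+q=k} E^{p,q}$ on $Y$, equipped with an $\mathcal{O}_Y$-linear Higgs field $\theta \colon E^{p,q} \to E^{p-1,q+1} \otimes \Omega^1_Y(\log D)$. The decisive algebraic point of the cyclic cover construction is that (a tensor power of) $\mathcal{L}$ embeds into the top Hodge piece $E^{k,0}$. Iterating the Higgs field gives morphisms $\theta^n \colon E^{k,0} \to E^{k-n,n} \otimes \Sym^n \Omega^1_Y(\log D)$ for each $n \leq k$. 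Let $n$ be maximal such that the restriction of $\theta^n$ to $\mathcal{L} \hookrightarrow E^{k,0}$ is nonzero; the kernel of $\theta^{n+1}$ then provides a nonzero map $\mathcal{L} \to \mathcal{K} \otimes \Sym^n \Omega^1_Y(\log D)$ for some line subsheaf $\mathcal{K} \subseteq E^{k-n,n}$ annihilated by $\theta$. Griffiths' semi-negativity for Hodge quotients shows $\mathcal{K}^{\vee}$ is pseudo-effective, so tensoring with $\mathcal{K}^{\vee}$ does not destroy the Kodaira--Iitaka dimension of $\mathcal{L}$; the resulting saturated invertible subsheaf $\mathcal{A} \subseteq \Sym^n \Omega^1_Y(\log D)$ satisfies $\kappa(\mathcal{A}) \geq \kappa(\mathcal{L}) \geq \Var(f^\circ)$, which is the desired conclusion.

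The hard part is controlling the logarithmic structure throughout the cyclic cover and Hodge-theoretic steps. One must arrange the branch divisor $H$ in sufficiently general position with respect to $D$ and to the singularities of the original family at infinity, so that the new family $\tilde{f}$ admits a log-smooth model and the resulting Higgs bundle has poles of order exactly logarithmic along $D$ — no worse. A subsidiary technical point is to prevent $\theta^n$ from dropping rank on $\mathcal{L}$ prematurely, which requires that the cyclic cover genuinely involve moduli variation and not reduce to a pullback from a lower-dimensional base. Once these log-geometric and Hodge-theoretic prerequisites are in place, the transition from positivity of $\mathcal{L}$ to positivity of $\mathcal{A} \subseteq \Sym^m \Omega^1_Y(\log D)$ becomes essentially formal via Griffiths' curvature estimates.
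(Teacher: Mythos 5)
Your outline is, in substance, the Viehweg--Zuo argument that the paper itself only sketches in Section~\ref{ssec:VZ2}: positivity of $\det f_*\bigl(\omega_{X/Y}^{\otimes\nu}\bigr)$, a cyclic covering that feeds this positivity into the top Hodge piece of an auxiliary variation of Hodge structure, iteration of the logarithmic Higgs field until it vanishes, and semi-negativity of $\ker\theta$ (Zuo) to extract the invertible subsheaf of $\Sym^m\Omega^1_Y(\log D)$. The paper packages the same mechanism differently: the maps $\tau^0_{p,q}$ on $F^{p,q}=R^qf'_*\bigl(\Omega^p_{X'/Y'}(\log\Delta')\otimes\mathcal L^{-1}\bigr)/\text{torsion}$ are your Higgs field after the twist by $\mathcal L^{-1}=\bigl(\Omega^n_{X'/Y'}(\log\Delta')\bigr)^{-1}$, your maximal $n$ is the paper's maximal $m$ with $\tau^m(F^{n,0})\neq\{0\}$, and your two inputs (bigness of the determinant bundle, semi-negativity of the kernel) are fused into the single Fact~\ref{fact:VZ48}. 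So the routes coincide up to normalisation.

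One step is wrong as written and would break the argument if taken literally: the branch divisor of the cyclic cover must live on $X$, as a general member of a linear system of the form $|\omega_{X/Y}^{\otimes\nu}\otimes f^*(\cdots)|$ produced from the generically surjective map $\sA^{\oplus M}\to\Sym^N f_*\bigl(\omega_{X/Y}^{\otimes 2}\bigr)$ of Theorem~\ref{thm:Vpos} (in its general form \cite[Prop.~3.4]{VZ02}). This is precisely where canonical polarisation enters: $\omega_{X/Y}^{\otimes\nu}$ is relatively very ample, so the cover is genuinely branched \emph{inside} each fibre, and the eigensheaf decomposition places $f_*\bigl(\omega_{X/Y}\otimes L^{(i)}\bigr)$, hence the positive piece, into $E^{k,0}$ of the new family in a way coupled to its Kodaira--Spencer maps. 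If instead one branches along $f^*H$ for $H\in|\mathcal L^{\otimes N}|$ on $Y$, then over $Y^\circ\setminus H$ the new family is fibrewise a disjoint union of copies of the old fibres; the associated local system is the old one tensored with a unitary rank-one system, its Higgs field is the old one, and no pluricanonical positivity is coupled to $\theta$. A second, smaller imprecision: the image of $\theta^n|_{\mathcal L}$ lands in $(\ker\theta)\otimes\Sym^n\Omega^1_Y(\log D)$ with $\ker\theta$ of possibly higher rank, so one does not directly obtain a line subsheaf $\mathcal K$; instead one dualises to get $(\ker\theta)^*\to\Sym^n\Omega^1_Y(\log D)$ and composes with the generically generating maps from Fact~\ref{fact:VZ48}, exactly as in Section~\ref{ssec:eopVZ}. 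With these two corrections your proposal matches the paper's proof.
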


\begin{rem}
  Observe that the Shafarevich hyperbolicity conjecture,
  Theorem~\ref{thm:shafarevich}, follows as an immediate corollary of
  Theorem~\ref{thm:VZ}.
\end{rem}
\begin{rem}
  A somewhat weaker version of Theorem~\ref{thm:VZ} holds for families of
  projective manifolds with only semiample canonical bundle if one assumes
  additionally that the family is of maximal variation, i.e., that
  $\Var(f^\circ) = \dim Y^\circ$, cf.~\cite[Thm.~1.4(iv)]{VZ02}.
\end{rem}

As we will see in Section~\ref{sec:ideaOfProof}, the ``Viehweg-Zuo'' sheaf
$\sA$ is one of the crucial ingredients in the proofs of Viehweg's and
Campana's conjecture for families over threefolds,
Theorems~\ref{thm:mainresult0}, \ref{thm:mainresult2} and \ref{thm:main}.  A
careful review of Viehweg and Zuo's construction reveals that the
``Viehweg-Zuo sheaf'' $\sA$ comes from the coarse moduli space $\mathfrak{M}$,
at least generically. The precise statement, given in
Theorem~\ref{thm:VZimproved}, uses the following notion.

\begin{notation}[Differentials coming from moduli space generically]\label{not:introB}
  Let $\mu: Y^\circ \to \mathfrak M$ be the moduli map associated with the
  family $f^\circ$, and consider the subsheaf $\mathcal B \subseteq
  \Omega^1_Y(\log D)$, defined on presheaf level as follows: if $U \subseteq
  Y$ is any open set and $\sigma \in H^0\bigl(U,\, \Omega^1_Y (\log D) \bigr)$
  any section, then $\sigma \in H^0\bigl(U,\, \mathcal B \bigr)$ if and only
  if the restriction $\sigma|_{U'}$ is in the image of the differential map
  $$
  d\mu|_{U'} : \mu^* \bigl( \Omega^1_{\mathfrak M}\bigr)|_{U'} \longrightarrow
  \Omega^1_{U'},
  $$
  where $U' \subseteq U\cap Y^\circ$ is the open subset where the moduli map
  $\mu$ has maximal rank.
\end{notation}

\begin{rem}
  By construction, it is clear that the sheaf $\mathcal B$ is a saturated
  subsheaf of $\Omega^1_Y (\log D)$, i.e., that the quotient sheaf $\Omega^1_Y
  (\log D)/\mathcal B$ is torsion free. We say that $\mathcal B$ is the
  saturation of $\Image(d\mu)$ in $\Omega^1_Y(\log D)$.
\end{rem}

\begin{mythm}[\protect{Refinement of the Viehweg-Zuo Theorem~\ref{thm:VZ}, \cite[Thm.~1.4]{JK09}}]\label{thm:VZimproved}
  In the setup of Theorem~\ref{thm:VZ}, there exists a number $m > 0$ and an
  invertible subsheaf $\sA \subseteq \Sym^m \mathcal B$ whose Kodaira-Iitaka
  dimension is at least the variation of the family, $\kappa(\sA) \geq
  \Var(f^\circ)$.
\end{mythm}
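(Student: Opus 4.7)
The plan is to re-examine the Viehweg-Zuo construction of $\sA$ and to show that at every step where a logarithmic cotangent vector on $Y$ enters the construction, it is pulled back from the moduli space $\mathfrak{M}$ via $\mu$.

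First, I would recall the architecture of the proof of Theorem~\ref{thm:VZ}. After a semistable reduction of $f^\circ$ and some auxiliary constructions that exploit the positivity of direct images of powers of the relative dualising sheaf, Viehweg and Zuo produce an injection out of $\sA$ which is built as a composition of iterated Kodaira-Spencer maps
\begin{equation*}
\theta_p : E^{n-p,p} \longrightarrow E^{n-p-1,p+1} \otimes \Omega^1_Y(\log D), \qquad 0 \le p \le n,
\end{equation*}
where $E^{\bullet,\bullet}$ denote the graded pieces of the Hodge filtration of a suitable variation of Hodge structure on $Y^\circ$. Composing and symmetrising yields the inclusion $\sA \hookrightarrow \Sym^m \Omega^1_Y(\log D)$ of Theorem~\ref{thm:VZ}, and the $m$ copies of $\Omega^1_Y(\log D)$ that appear are precisely those contributed by the $m$-fold iteration of the Kodaira-Spencer class
\begin{equation*}
\kappa_f : T_Y(-\log D) \longrightarrow R^1 f_* T_{X/Y}\bigl(-\log f^{-1}(D)\bigr).
\end{equation*}

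Next I would invoke the functoriality of Kodaira-Spencer. Since $\kappa_f$ classifies the infinitesimal deformation of the fibres of $f^\circ$ and the fibres are canonically polarised, it factors through the differential of the moduli map: there is a commutative diagram
\begin{equation*}
T_Y(-\log D) \xrightarrow{d\mu} \mu^* T_{\mathfrak{M}} \longrightarrow R^1 f_* T_{X/Y}\bigl(-\log f^{-1}(D)\bigr).
\end{equation*}
Dualising and contracting, the $\Omega^1_Y(\log D)$-factor in the image of every $\theta_p$ is contained, on the open subset $U' \subseteq Y^\circ$ of Notation~\ref{not:introB}, in the image of the codifferential $d\mu^*$, i.e.\ inside $\mathcal B|_{U'}$. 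Iterating over all $p$, the composite of the $\theta_p$ restricts over $U'$ to a map $\sA|_{U'} \hookrightarrow \Sym^m \mathcal B|_{U'}$.

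It remains to globalise this inclusion. Since $\mathcal B$ is a saturated subsheaf of the locally free sheaf $\Omega^1_Y(\log D)$, its $m$-th symmetric power $\Sym^m \mathcal B$ is saturated in $\Sym^m \Omega^1_Y(\log D)$, so the quotient $\Sym^m \Omega^1_Y(\log D)/\Sym^m \mathcal B$ is torsion free. The composition $\sA \hookrightarrow \Sym^m \Omega^1_Y(\log D) \twoheadrightarrow \Sym^m \Omega^1_Y(\log D)/\Sym^m \mathcal B$ vanishes on the dense open set $U'$ and takes values in a torsion-free sheaf, hence vanishes identically, giving the desired factorisation $\sA \subseteq \Sym^m \mathcal B$ on all of $Y$. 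The Kodaira-Iitaka bound $\kappa(\sA) \ge \Var(f^\circ)$ then transfers verbatim from Theorem~\ref{thm:VZ}. I expect the most delicate step to be making this factorisation-through-$d\mu$ precise across the auxiliary modifications (semistable reductions, base changes, blow-ups, and cyclic covers) that enter Viehweg-Zuo's argument: one must check that $\sA$, as produced by the refined construction, agrees on $Y$ with the sheaf of Theorem~\ref{thm:VZ}, and that only genuine first-order Kodaira-Spencer classes---which are forced to factor through $d\mu$---contribute to the iteration, rather than higher-order jet information that could escape $\mathcal B$.
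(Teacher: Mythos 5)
Your overall strategy --- trace through the Viehweg--Zuo construction, check that every copy of $\Omega^1_Y(\log D)$ contributed by the iterated Kodaira--Spencer/Higgs maps lies generically in $\Image(d\mu)$, and then saturate --- is exactly the one this survey indicates (the survey gives no proof of Theorem~\ref{thm:VZimproved}; it defers to \cite{JK09} and only asks the reader to believe the claim after the synopsis in Section~\ref{ssec:VZ2}). The central step of your proposal, however, is not yet an argument. The diagram
$$
T_Y(-\log D) \longrightarrow \mu^* T_{\mathfrak M} \longrightarrow R^1 f_* T_{X/Y}\bigl(-\log f^{-1}(D)\bigr)
$$
does not exist as written: $\mathfrak M$ is only a \emph{coarse} moduli space, it carries no universal family, and at its singular points $T_{\mathfrak M}$ has no deformation-theoretic meaning, so there is no natural second arrow. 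This is precisely the difficulty that \cite{JK09} resolve by passing to a finite cover of the moduli space over which a universal family exists and proving that the sheaves $F^{p,q}$ and the maps $\tau^0_{p,q}$ of Section~\ref{sssec:hKS1} are compatible with the resulting base change. Relatedly, what enters the construction is not the Kodaira--Spencer map of $f$ itself but the connecting maps of the $\mathcal L^{-1}$-twisted filtered complex, i.e.\ the Higgs field of a variation of Hodge structure attached to a cyclic covering; that these, too, factor generically through $\mathcal B$ is the actual content of the refinement and does not follow formally from ``functoriality of Kodaira--Spencer''. You correctly flag this as the delicate point, but the proposal defers it rather than closing it.

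A smaller gap lies in the globalisation: you assert that $\Sym^m \mathcal B$ is saturated in $\Sym^m \Omega^1_Y(\log D)$ because $\mathcal B$ is. Symmetric powers of saturated subsheaves of locally free sheaves need not be saturated in general, so this step needs either a proof (it does hold here, but requires checking in codimension two) or a replacement of $\Sym^m\mathcal B$ by its saturation. The final transfer of the bound $\kappa(\sA) \geq \Var(f^\circ)$ from Theorem~\ref{thm:VZ} is unproblematic.
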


Theorem~\ref{thm:VZimproved} follows without too much work from Viehweg's and
Zuo's original arguments and constructions, which are reviewed in
Section~\ref{ssec:VZ2} below. Compared with Theorem~\ref{thm:VZ}, the refined
Viehweg-Zuo theorem relates more directly to Campana's
Conjecture~\ref{conj:campana} and other generalizations of the Shafarevich
conjecture. To illustrate its use, we show in the surface case how
Theorem~\ref{thm:VZimproved} reduces Campana's Conjecture~\ref{conj:campana}
to the Viehweg Conjecture~\ref{conj:Viehweg}, for which a positive answer is
known.

\begin{cor}[Campana's conjecture in dimension two]
  Conjecture~\ref{conj:campana} holds if $\dim Y^\circ = 2$.
\end{cor}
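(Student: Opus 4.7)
I argue by contraposition: assume $f^\circ : X^\circ \to Y^\circ$ is \emph{not} isotrivial, and construct an invertible subsheaf $\sA \subseteq \Omega^p_Y(\log D)$ with $\kappa(\sA) \geq p$ for some $1 \leq p \leq 2$, thereby violating the strict inequality in Definition~\ref{def:speciallog} and showing that $(Y, D)$ is not special. Since $\dim Y^\circ = 2$ and the family is non-isotrivial, the argument splits according to the value of $\Var(f^\circ) \in \{1, 2\}$.

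If $\Var(f^\circ) = 2$, then by Theorem~\ref{thm:mainresult0}---Viehweg's conjecture in dimension $\leq 3$, applied to the canonically polarised (hence semiample-canonical) family $f^\circ$---the surface $Y^\circ$ is of log general type, so $K_Y + D$ is big. The identification $\sO_Y(K_Y + D) \cong \det \Omega^1_Y(\log D) = \Omega^2_Y(\log D)$ then provides an invertible subsheaf $\sA := \sO_Y(K_Y + D) \hookrightarrow \Omega^2_Y(\log D)$ with $\kappa(\sA) = 2$, violating specialness with $p = 2$.

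If $\Var(f^\circ) = 1$, I apply the refined Viehweg-Zuo Theorem~\ref{thm:VZimproved} to obtain $m > 0$ and an invertible subsheaf $\sA \subseteq \Sym^m \mathcal{B}$ with $\kappa(\sA) \geq 1$. By the construction in Notation~\ref{not:introB}, the generic rank of $\mathcal{B}$ equals the rank of $d\mu$ at the generic point, which is $\Var(f^\circ) = 1$. Since $Y$ is a smooth surface and $\mathcal{B}$ is saturated in the locally free sheaf $\Omega^1_Y(\log D)$, it is reflexive of rank one, hence a line bundle; in particular $\Sym^m \mathcal{B} = \mathcal{B}^{\otimes m}$. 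The inclusion $\sA \subseteq \mathcal{B}^{\otimes m}$ forces $\kappa(\mathcal{B}^{\otimes m}) \geq \kappa(\sA) \geq 1$, so $\kappa(\mathcal{B}) \geq 1$. Thus $\mathcal{B} \hookrightarrow \Omega^1_Y(\log D)$ is a line bundle with $\kappa(\mathcal{B}) \geq 1 = p$, again violating specialness.

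In dimension two there is no real obstacle because $\mathcal{B}$ has rank at most one, so passing from an invertible subsheaf of $\Sym^m \mathcal{B}$ back to an invertible subsheaf of $\Omega^1_Y(\log D)$ is automatic. The analogous strategy in higher dimensions would require both Viehweg's conjecture in full generality and a way to extract, from an invertible subsheaf of $\Sym^m \mathcal{B}$ of large Kodaira-Iitaka dimension, an invertible subsheaf of $\Omega^p_Y(\log D)$ with $\kappa \geq p$ for some $p \leq \mathrm{rank}(\mathcal{B})$; this symmetric-power bookkeeping is the genuine obstruction in the general case.
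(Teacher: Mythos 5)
Your proposal is correct and follows essentially the same route as the paper: reduce the case $\Var(f^\circ)=2$ via the solution of Viehweg's conjecture in dimension two (your contrapositive phrasing through $\Omega^2_Y(\log D)\cong\sO_Y(K_Y+D)$ is just the unwinding of ``special $\Rightarrow$ not of log general type''), and kill the case $\Var(f^\circ)=1$ with Theorem~\ref{thm:VZimproved} plus the observation that the saturated sheaf $\mathcal B$ is an invertible subsheaf of $\Omega^1_Y(\log D)$ of Kodaira--Iitaka dimension at least one, contradicting Definition~\ref{def:speciallog}. The only differences are presentational (contraposition versus contradiction), so nothing further is needed.
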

\begin{proof}
  We maintain the notation of Conjecture~\ref{conj:campana} and let $f:
  X^\circ \to Y^\circ$ be a smooth family of canonically polarised varieties
  over a smooth quasi-projective base, with $Y^\circ$ a special surface.
  Since $Y^\circ$ is special, it is not of log general type, and the solution
  to Viehweg's conjecture in dimension two, \cite[Thm.~1.1]{KK08c}, gives that
  $\Var(f^\circ) < 2$.

  We argue by contradiction, suppose that $\Var(f^\circ)=1$ and choose a
  compactification $(Y, D)$ as in Definition~\ref{def:speciallog}. By
  Theorem~\ref{thm:VZimproved} there exists a number $m>0$ and an invertible
  subsheaf $\mathcal A \subseteq \Sym^m \mathcal B$ such that $\kappa(\mathcal
  A) \geq 1$.  However, since $\mathcal B$ is saturated in the locally free
  sheaf $\Omega_Y^1(\log D)$, it is reflexive, \cite[Claim on p.~158]{OSS},
  and since $\Var(f^\circ)=1$, the sheaf $\mathcal B$ is of rank $1$.  Thus
  $\mathcal B \subseteq \Omega_Y^1(\log D)$ is an invertible subsheaf,
  \cite[Lem.~1.1.15, on p.~154]{OSS}, and Definition~\ref{def:speciallog} of a
  special pair gives that $\kappa(\mathcal B) <1$, contradicting the fact that
  $\kappa(\mathcal A) \geq 1$.  It follows that $\Var(f^\circ) = 0$ and that
  the family is hence isotrivial.
\end{proof}

\subsubsection*{Outline of this section}

Given its importance in the theory, we give a very brief synopsis of
Viehweg-Zuo's proof of Theorem~\ref{thm:VZ}, showing how the theorem follows
from deep positivity results\footnote{The positivity results in question are
  formulated in Theorems~\ref{thm:Vpos} and Fact~\ref{fact:VZ48},
  respectively.} for push-forward sheaves of relative dualizing sheaves, and
for kernels of Kodaira-Spencer maps, respectively. Even though no proof of the
refined Theorem~\ref{thm:VZimproved}, is given, it is hoped that the reader
who chooses to read Section~\ref{ssec:VZ2} will believe that
Theorem~\ref{thm:VZimproved} follows with some extra work by essentially the
same methods.

The reader who is interested in a detailed understanding, including is
referred to the papers \cite{Kol86}, \cite{VZ02}, and to the survey
\cite{Viehweg01}. The overview contained in this section and the facts
outlined in Section~\ref{ssec:facts} can perhaps serve as a guideline to
\cite{VZ02}.

Many of the technical difficulties encountered in the full proof of
Theorem~\ref{thm:VZ} vanish if $f^\circ$ is a family of curves. The proof
becomes very transparent in this case. In particular, it is very easy to see
how the Kodaira-Spencer map associated with the family $f^\circ$ transports
the positivity found in push-forward sheaves into the sheaf of differentials
$\Omega^1_Y(\log Y)$. After setting up notation in Section~\ref{ssec:8A}, we
have therefore included a Section~\ref{ssec:8B} which discusses the curve case
in detail.

Most of the material presented in the current Section~\ref{sec:VZ}, including
the synopsis of Viehweg-Zuo's construction, is taken without much modification
from the paper \cite{JK09}. The presentation is inspired in part by
\cite{Viehweg01}.

\subsection{A synopsis of Viehweg-Zuo's construction}
\label{ssec:VZ2}

\subsubsection{Setup of notation}
\label{ssec:8A}

Throughout the present Section~\ref{ssec:VZ2}, we choose a smooth projective
compactification $X$ of $X^\circ$ such that the following holds:
\begin{enumerate}
\item The difference $\Delta := X \setminus X^\circ$ is a divisor with simple
  normal crossings.
\item The morphism $f^\circ$ extends to a projective morphism $f: X \to Y$.
  \setcounter{saveenumi}{\theenumi}
\end{enumerate}
It is then clear that $\Delta = f^{-1}(D)$ set-theoretically. Removing a
suitable subset $S \subset Y$ of codimension $\codim_Y S \geq 2$, the
following will then hold automatically on $Y' := Y\setminus S$ and $X' := X
\setminus f^{-1}(S)$, respectively.
\begin{enumerate}
\setcounter{enumi}{\thesaveenumi}
\item The restricted morphism $f' := f|_{X'}$ is flat.
\item The divisor $D' := D\cap Y'$ is smooth.
\item The divisor $\Delta' := \Delta \cap X'$ is a relative normal crossing
  divisor, i.e. a normal crossing divisor whose components and all their
  intersections are smooth over the components of $D'$.
\end{enumerate}
In the language of Viehweg-Zuo, \cite[Def~2.1(c)]{VZ02}, the restricted
morphism $f' : X' \to Y'$ is a ``good partial compactification of $f^\circ$''.

\begin{rem}[Restriction to a partial compactification]\label{rem:extoS}
  Let $\mathcal G$ be a locally free sheaf on $Y$, and let $\mathcal F'
  \subseteq \mathcal G|_{Y'}$ be an invertible subsheaf. Since $\codim_Y S
  \geq 2$, there exists a unique extension of the sheaf $\mathcal F'$ to an
  invertible subsheaf $\mathcal F \subseteq \mathcal G$ on $Y$. Furthermore,
  the restriction map $H^0 \bigl( Y,\, \mathcal F \bigr) \to H^0 \bigl( Y',\,
  \mathcal F' \bigr)$ is an isomorphism. In particular, the notion of
  Kodaira-Iitaka dimension makes sense for the sheaf $\mathcal F'$, and
  $\kappa(\mathcal F') = \kappa(\mathcal F)$.
\end{rem}

We denote the relative dimension of $X$ over $Y$ by $n := \dim X - \dim Y$.

\subsubsection{Idea of proof of Theorem~\ref*{thm:VZ} for families of curves}
\label{ssec:8B}

Before sketching the proof of Theorem~\ref{thm:VZ} in full generality, we
illustrate the main idea in a particularly simple setting.

\begin{sass}\label{sass:famcurves}
  Throughout the present introductory Section~\ref{ssec:8B}, we maintain the
  following simplifying assumptions in addition to the assumptions made in
  Theorem~\ref{thm:VZ}.
  \begin{enumerate}
  \item The quasi-projective variety $Y^\circ$ is in fact projective. In
    particular, we assume that $X=X^\circ$, $f = f^\circ$, that $D=\emptyset$
    and that $\Delta = \emptyset$.
  \item The family $f: X \to Y$ is a family of curves of genus $g > 1$. In
    particular, we have that $(T_{X/Y})^* = \Omega^1_{X/Y} = \omega_{X/Y}$,
    where $T_{X/Y}$ is the kernel of the derivative $Tf : T_X \to f^*(T_Y)$.
  \item The variation of $f^\circ$ is maximal, that is, $\Var(f^\circ) = \dim
    Y^\circ$.
  \end{enumerate}
\end{sass}

The proof of Theorem~\ref*{thm:VZ} sketched here uses positivity of the
push-forward of relative dualizing sheaves as its main input. The positivity
result required is discussed in Viehweg's survey \cite[Sect.~1--3]{Viehweg01},
where positivity is obtained as a consequence of generalised Kodaira vanishing
theorems. The reader interested in a broader overview might also want to look
at the remarks and references in \cite[Sect.~6.3.E]{L04}, as well as the
papers \cite{Kol86, Zuo00}

\begin{mythm}[\protect{Positivity of push-forward sheaves, cf.~\cite[Prop.~3.4.(i)]{VZ02}}]\label{thm:Vpos}
  Under the simplifying Assumptions~\ref{sass:famcurves}, the push-forward
  sheaf $f_* \bigl(\omega_{X/Y}^{\otimes 2} \bigr)$ is locally free of
  positive rank. If $\sA \in \Pic(Y)$ is any ample line bundle, then there
  exist numbers $N,M \gg 0$ and a sheaf morphism
  $$
  \phi : \sA^{\oplus M} \to \Sym^N f_* \bigl(\omega_{X/Y}^{\otimes 2} \bigr)
  $$
  which is surjective at the general point of $Y$. \qed
\end{mythm}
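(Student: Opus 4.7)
The plan is to establish the two claims separately, the first being routine and the second substantial.

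First I would address the local freeness and positive rank of $f_*(\omega_{X/Y}^{\otimes 2})$. Since $f: X \to Y$ is smooth and proper with connected curve fibres $F$ of genus $g \geq 2$, and $\omega_{X/Y}$ commutes with base change, Serre duality gives $h^1(F, \omega_F^{\otimes 2}) = h^0(F, \omega_F^{-1}) = 0$ and Riemann-Roch gives $h^0(F, \omega_F^{\otimes 2}) = 3g - 3 > 0$. Grauert's theorem then shows that $f_*(\omega_{X/Y}^{\otimes 2})$ is locally free of rank $3g - 3$.

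For the existence of the generically surjective map $\phi$, my approach would combine Viehweg's weak positivity theorem with bigness of the determinant under maximal variation. The structure is as follows. \emph{Step 1: Weak positivity.} Invoke Viehweg's theorem that $f_*(\omega_{X/Y}^{\otimes k})$ is weakly positive for every $k \geq 1$, meaning that for any ample $\mathcal{A} \in \Pic(Y)$ and any $\alpha > 0$ there is a $\beta > 0$ such that $\Sym^{\alpha \beta} f_*(\omega_{X/Y}^{\otimes k}) \otimes \mathcal{A}^{\otimes \beta}$ is globally generated over a dense open subset of $Y$. The proof proceeds through cyclic covers built from sections of $\omega_{X/Y}^{\otimes m}$, combined with the Esnault-Viehweg generalised Kodaira vanishing, which produces semipositivity; weak positivity is then obtained by an iteration/amplification argument.

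\emph{Step 2: Bigness of the determinant.} Under the hypothesis of maximal variation, the moduli map $\mu: Y \to \overline{M_g}$ is generically finite. Kollár's ampleness theorem for moduli of canonically polarised varieties produces an ample line bundle $\mathcal{L}$ on $\overline{M_g}$ such that a positive multiple of $\mu^*\mathcal{L}$ coincides with $\det f_*(\omega_{X/Y}^{\otimes k})$ for sufficiently large $k$. Consequently $\det f_*(\omega_{X/Y}^{\otimes k})$ is big on $Y$. \emph{Step 3: Combination.} A formal argument (see e.g.\ Viehweg's book \cite{V95}) shows that the combination of weak positivity with a big determinant produces, for some $N$, enough global sections of $\Sym^N f_*(\omega_{X/Y}^{\otimes 2}) \otimes \mathcal{A}^{-1}$ to generate the stalk at the generic point of $Y$. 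Choosing $M$ such sections yields the sought morphism $\phi: \mathcal{A}^{\oplus M} \to \Sym^N f_*(\omega_{X/Y}^{\otimes 2})$, surjective at the general point.

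The hard part will be Step 1: Viehweg's weak positivity is technically delicate, requiring the careful interplay of cyclic covers and vanishing theorems, and controlling how sections of high powers of $\omega_{X/Y}$ descend to $Y$. Step 2 is conceptually cleaner, but nontrivially depends on moduli-theoretic ampleness results on $\overline{M_g}$. Step 3 is largely formal once the two previous inputs are in place.
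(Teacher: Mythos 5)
The paper states this result as a quoted fact, citing \cite[Prop.~3.4(i)]{VZ02} and \cite[Sect.~1--3]{Viehweg01}, and does not reproduce a proof; your outline --- local freeness of $f_*\bigl(\omega_{X/Y}^{\otimes 2}\bigr)$ via fibrewise vanishing, base change and Grauert, followed by weak positivity of the push-forward, bigness of its determinant under maximal variation, and Viehweg's formal amplification lemma combining the two --- is precisely the argument of those cited sources. I see no gap; this is the standard route and matches the approach the paper relies on.
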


For the reader's convenience, we recall two other facts used in the proof,
namely the existence of a Kodaira-Spencer map, and Serre duality in the
relative setting.

\begin{mythm}[\protect{Kodaira-Spencer map,~cf.~\cite[Sect.~9.1.2]{Voisin-Hodge1} or \cite[Sect.~6.2]{Huy05}}]\label{thm:Kodaira-Spencer}
  Under the simplifying Assumptions~\ref{sass:famcurves}, since $\Var(f) > 0$,
  there exists a non-zero sheaf morphism $\kappa : T_Y \to R^1f_*(T_{X/Y})$
  which measures the variation of the isomorphism classes of fibres in
  moduli. \qed
\end{mythm}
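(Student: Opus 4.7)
The plan is to construct $\kappa$ as the connecting homomorphism of a standard relative tangent sequence, and then to deduce non-vanishing from the moduli-theoretic meaning of the Kodaira-Spencer class.

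First I would start from the relative tangent sequence
$$0 \longrightarrow T_{X/Y} \longrightarrow T_X \longrightarrow f^*T_Y \longrightarrow 0,$$
which is exact because $f$ is smooth (Simplifying Assumption~\ref{sass:famcurves}). Pushing forward along $f$ produces a long exact sequence, and since the fibres of $f$ are connected one has $f_*\mathcal{O}_X=\mathcal{O}_Y$, so the projection formula identifies $f_*f^*T_Y$ with $T_Y$. The connecting homomorphism of the long exact sequence is then the desired map $\kappa : T_Y \to R^1f_*(T_{X/Y})$.

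To verify non-vanishing, I would work fibrewise. Smooth base change identifies the stalk of $R^1f_*(T_{X/Y})$ at a closed point $y\in Y$ with $H^1(X_y, T_{X_y})$, and under this identification the stalk map $\kappa_y : T_{Y,y}\to H^1(X_y, T_{X_y})$ agrees with the classical Kodaira-Spencer class measuring the first-order deformation of $X_y$ along a tangent direction in $T_{Y,y}$. Because $g>1$, the coarse moduli space $M_g$ exists, and under the standard identification $T_{[X_y]}M_g\cong H^1(X_y, T_{X_y})$ coming from deformation theory, $\kappa_y$ coincides with the differential at $y$ of the induced moduli map $\mu : Y\to M_g$. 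The assumption $\Var(f)>0$ says by definition that $f$ is not isotrivial, so $\mu$ is non-constant, its differential is generically nonzero, and hence $\kappa\neq 0$.

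The one subtle step—and the main obstacle in turning this sketch into a full proof—is the identification between the algebraically defined connecting homomorphism $\kappa_y$ and the differential $d\mu_y$ of the moduli map, since the former is a pure sheaf-cohomology construction while the latter comes from analytic deformation theory. For smooth families of curves of genus $\geq 2$ this identification is entirely classical, however, and is carried out in detail in the references cited alongside the theorem statement.
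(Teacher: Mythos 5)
Your proposal is correct and follows essentially the same route the paper itself indicates: the paper states the theorem as a classical fact (deferring to Voisin and Huybrechts) and later, in its ``Construction'' recalling the Kodaira--Spencer map, obtains $\kappa$ as the connecting morphism of the relative (co)tangent sequence --- your version via $0 \to T_{X/Y} \to T_X \to f^*T_Y \to 0$ is just the dual presentation of the paper's twist of $0 \to f^*\Omega^1_Y \to \Omega^1_X \to \Omega^1_{X/Y} \to 0$ by $\omega_{X/Y}^*$. The non-vanishing argument via the identification of the fibrewise class with the differential of the moduli map, and the fact that $\Var(f^\circ)>0$ forces that map to be non-constant, is exactly the standard justification the cited references supply.
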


\begin{mythm}[\protect{Serre duality in the relative setting, cf.~\cite[Sect.~6.4]{Liu02}}]\label{thm:serre}
  Under the simplifying Assumptions~\ref{sass:famcurves}, if $\mathcal F$ is
  any coherent sheaf on $X$, then there exists a natural isomorphism
  $f_*(\mathcal F^* \otimes \omega_{X/Y}) \cong \bigl(R^1 f_*(\mathcal
  F)\bigr)^* $. \qed
\end{mythm}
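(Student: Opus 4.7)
The plan is to deduce the relative Serre duality from Grothendieck's general duality theory. Because $f: X \to Y$ is smooth and proper of relative dimension one, the twisted inverse image satisfies $f^{!}(\mathcal O_Y) \cong \omega_{X/Y}[1]$, and Grothendieck duality yields a canonical isomorphism in the derived category
\[
  Rf_* R\mathcal{H}om_X\bigl(\mathcal F,\, \omega_{X/Y}\bigr)[1] \;\cong\; R\mathcal{H}om_{\mathcal O_Y}\bigl(Rf_* \mathcal F,\, \mathcal O_Y\bigr)
\]
for any coherent sheaf $\mathcal F$ on $X$. I would first extract the cohomology sheaves in degree $-1$ on both sides. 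On the left, for locally free $\mathcal F$, one obtains $f_*(\mathcal F^* \otimes \omega_{X/Y})$. On the right, since $Rf_* \mathcal F$ is concentrated in degrees $0$ and $1$ (because the relative dimension is one), the hypercohomology spectral sequence collapses and identifies the degree $-1$ term with $\mathcal{H}om_{\mathcal O_Y}(R^1 f_* \mathcal F,\, \mathcal O_Y) = (R^1 f_* \mathcal F)^*$, which is precisely the desired statement.

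A more hands-on alternative is to construct the natural comparison map explicitly. The relative trace $\mathrm{Tr}: R^1 f_* \omega_{X/Y} \to \mathcal O_Y$ combined with the cup product furnishes the pairing
\[
  f_*(\mathcal F^* \otimes \omega_{X/Y}) \;\times\; R^1 f_* \mathcal F \;\longrightarrow\; R^1 f_* \omega_{X/Y} \;\longrightarrow\; \mathcal O_Y,
\]
whose adjoint is a morphism $\phi : f_*(\mathcal F^* \otimes \omega_{X/Y}) \to (R^1 f_* \mathcal F)^*$. To verify that $\phi$ is an isomorphism, I would restrict to each fiber $X_y$, a smooth projective curve of genus $g > 1$. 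There classical Serre duality produces a canonical isomorphism $H^0(X_y, \mathcal F_y^* \otimes \omega_{X_y}) \cong H^1(X_y, \mathcal F_y)^*$ compatible with cup product and trace. Flatness of $f$ together with Grauert's theorem on cohomology and base change then lifts the pointwise identification to an isomorphism of coherent sheaves on $Y$.

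The main obstacle is extending the argument from locally free $\mathcal F$ to an arbitrary coherent sheaf, where the symbol $\mathcal F^*$ should really be interpreted in the derived sense as $R\mathcal{H}om_X(\mathcal F, \mathcal O_X)$. Grothendieck duality handles this cleanly but requires the full apparatus of the twisted inverse image functor $f^!$ and the compatibility of trace with base change; for the purposes of the present survey it is therefore more economical to cite the standard formulation of relative Serre duality as a black box, in the form recorded in Liu's textbook.
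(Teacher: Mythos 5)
The paper does not prove this statement at all: it is recorded as a known fact, with the reference to Liu, Sect.~6.4, and the qed box attached directly to the statement --- exactly the ``cite it as a black box'' option you arrive at in your final paragraph. Your sketch of what lies behind the black box is correct and standard. The derived-category route via $f^!\mathcal O_Y \cong \omega_{X/Y}[1]$ is the cleanest: since $Rf_*\mathcal F$ is concentrated in degrees $0$ and $1$ (relative dimension one), the only contribution to the degree $-1$ cohomology of $R\mathcal{H}om_{\mathcal O_Y}(Rf_*\mathcal F, \mathcal O_Y)$ is $\mathcal{H}om_{\mathcal O_Y}(R^1f_*\mathcal F, \mathcal O_Y)$, as you say. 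Two caveats are worth making explicit. First, as you already note, for a genuinely arbitrary coherent $\mathcal F$ the left-hand side acquires correction terms from the higher sheaves $\mathcal{E}xt^i_X(\mathcal F, \omega_{X/Y})$, so the statement as printed is only literally correct for locally free $\mathcal F$; this is harmless here, since the paper applies the theorem only to $\mathcal F = T_{X/Y}$, which is a line bundle under the simplifying assumptions. Second, in your hands-on alternative, Grauert's theorem requires $h^1(X_y, \mathcal F_y)$ to be locally constant in $y$, which can fail even for locally free $\mathcal F$ (fibrewise cohomology can jump); the duality isomorphism itself does not need this hypothesis, so that step is better phrased via the compatibility of the trace map with base change in the derived sense rather than via Grauert. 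Neither point affects the way the theorem is used in the paper.
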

\begin{proof}[Proof of Theorem~\ref{thm:VZ} under the Simplifying Assumptions~\ref{sass:famcurves}]
  Consider the dual of the (non-trivial) Kodaira-Spencer map discussed in
  Theorem~\ref{thm:Kodaira-Spencer}, say $\kappa^* : \bigl(R^1f_*(T_{X/Y})
  \bigr)^* \to (T_Y)^*$. Recalling that $T_{X/Y}^*$ equals the relative
  dualizing sheaf $\omega_{X/Y}$, and using the relative Serre Duality
  Theorem~\ref{thm:serre}, the sheaf morphism $\kappa^*$ is naturally
  identified with a non-zero morphism
  \begin{equation}\label{eq:X2}
  \kappa^* : f_* \bigl( \omega_{X/Y}^{\otimes 2} \bigr) \to \Omega^1_Y.    
  \end{equation}
  Choosing an ample line bundle $\sA \in \Pic(Y)$ and sufficiently large and
  divisible numbers $N,M \gg 0$, Theorem~\ref{thm:Vpos} yields a sequence of
  sheaf morphisms
  $$
  \xymatrix{%
    \sA^{\oplus M} \ar[rr]^(.4){\phi}_(.4){\text{gen.~surjective}} && \Sym^N f_* \bigl(\omega_{X/Y}^{\otimes 2} \bigr)
    \ar[rr]^(.55){\Sym^N(\kappa^*)}_(.55){\text{non-trivial}} && \Sym^N \Omega^1_Y, %
  }
  $$
  whose composition $\sA^{\oplus M} \to \Sym^N \Omega^1_Y$ is clearly not the
  zero map. Consequently, we obtain a non-trivial map $\sA \to \Sym^N
  \Omega^1_Y$, finishing the proof of Theorem~\ref{thm:VZ} under the
  Simplifying Assumptions~\ref{sass:famcurves}.
\end{proof}

The proof outlined above uses the dual of the Kodaira-Spencer map as a vehicle
to transport the positivity which exists in $f_* \bigl( \omega_{X/Y}^{\otimes
  2} \bigr)$ into the sheaf $\Omega^1_Y$ of differential forms on $Y$. If $f$
was a family of surfaces rather than a family of curves, then Serre duality
cannot easily be used to identify the dual of $R^1f_*(T_{X/Y})$ with a
push-forward sheaf of type $f_* \bigl( \omega_{X/Y}^{\otimes \bullet} \bigr)$,
or any with other sheaf whose positivity is well-known.  To overcome this
problem, Viehweg suggested to replace the Kodaira-Spencer map $\kappa$ by
sequences of more complicated sheaf morphisms $\tau^0_{p,q}$ and $\tau^k$,
constructed in Sections~\ref{sssec:hKS1} and \ref{sssec:hKS2} below. To
motivate the slightly involved construction of these maps, we recall without
proof a description of the classical Kodaira-Spencer map.

\begin{construction}
  Under the Simplifying Assumptions~\ref{sass:famcurves}, consider the standard
  sequence of relative differential forms on $X$,
  \begin{equation}\label{eq:reldiff}
    0 \to f^*\Omega^1_Y \to \Omega^1_X \to \Omega^1_{X/Y} \to 0,    
  \end{equation}
  and its twist with the invertible sheaf $\omega_{X/Y}^*$,
  $$
  0 \to f^*\Omega^1_Y \otimes \omega_{X/Y}^* \to \Omega^1_X \otimes
  \omega_{X/Y}^* \to \underbrace{\Omega^1_{X/Y} \otimes \omega_{X/Y}^*}_{\cong
    \mathcal O_X} \to 0.
  $$
  Using that $f_* (\mathcal O_X) = \mathcal O_Y$, the first connecting
  morphism associated with this sequence then reads
  \begin{equation}\label{eq:X1}
    \mathcal O_Y \to \Omega^1_Y \otimes R^1f_*(\omega_{X/Y}^*) =: \mathcal F.    
  \end{equation}
  The sheaf $\mathcal F$ is naturally isomorphic to the sheaf Hom$\bigl( T_Y,
  R^1f_*(T_{X/Y}) \bigr)$. To give a morphism $\mathcal O_Y \to \mathcal F$ is
  thus the same as to give a map $T_Y \to R^1f_*(T_{X/Y})$, and the morphism
  obtained in~\eqref{eq:X1} is the same as the Kodaira-Spencer map discussed
  in Theorem~\ref{thm:Kodaira-Spencer}.

  Observe also that Serre duality yields a natural identification of $\mathcal
  F$ with the sheaf Hom$\bigl(f_*(\omega_{X/Y}^{\otimes 2}), \Omega^1_Y
  \bigr)$. To give a morphism $\mathcal O_Y \to \mathcal F$ it is thus the
  same as to give a map $f_*(\omega_{X/Y}^{\otimes 2}) \to \Omega^1_Y$. The
  morphism obtained in this way from~\eqref{eq:X1} is of course the morphism
  $\kappa^*$ of Equation~\eqref{eq:X2}.
\end{construction}

\subsubsection{Proof of Theorem~\ref*{thm:VZ}, construction of the $\boldsymbol{\tau^0_{p,q}}$}
\label{sssec:hKS1}

In the general setting of Theorem~\ref{thm:VZ} where the simplifying
Assumptions~\ref{sass:famcurves} do not generally hold, the starting point of
the Viehweg-Zuo construction is the standard sequence of relative logarithmic
differentials associated to the flat morphism $f'$ which generalises
Sequence~\eqref{eq:reldiff} from above,
\begin{equation}\label{eq:lds}
  0 \to (f')^*\Omega^1_{Y'}(\log D') \to \Omega^1_{X'}(\log \Delta') \to
  \Omega^1_{X'/Y'}(\log \Delta') \to 0.
\end{equation}
We refer to \cite[Sect.~4]{EV90} for a discussion of Sequence~\eqref{eq:lds},
and for a proof of the fact that the cokernel $\Omega^1_{X'/Y'}(\log \Delta')$
is locally free. By \cite[II, Ex.~5.16]{Ha77}, Sequence~\eqref{eq:lds} defines
a filtration of the $p^{\rm th}$ exterior power,
$$
\Omega^p_{X'}(\log \Delta') = F^0 \supseteq F^1 \supseteq \cdots \supseteq F^p
\supseteq F^{p+1} = \{0\},
$$
with $F^r/F^{r+1} \cong (f')^*\bigl( \Omega^r_{Y'}(\log D') \bigr) \otimes
\Omega^{p-r}_{X'/Y'}(\log \Delta')$. Take the first sequence induced by the
filtration,
$$
0 \longrightarrow F^1 \longrightarrow F^0 \longrightarrow F^0/F^1
\longrightarrow 0,
$$
modulo $F^2$, and obtain
\begin{multline}\label{eq:amt}
  0 \longrightarrow (f')^*\bigl( \Omega^1_{Y'}(\log D')\bigr) \otimes
  \Omega^{p-1}_{X'/Y'}(\log \Delta')
  \longrightarrow F^0/F^2 \longrightarrow \\
  \longrightarrow \Omega^p_{X'/Y'}(\log \Delta') \longrightarrow 0.
\end{multline}
Setting $\mathcal L := \Omega^n_{X'/Y'} (\log \Delta')$, twisting
Sequence~\eqref{eq:amt} with $\mathcal L^{-1}$ and pushing down, the connecting
morphisms of the associated long exact sequence give maps
$$
\tau^0_{p,q}: F^{p,q} \longrightarrow F^{p-1,q+1} \otimes \Omega_{Y'}^1(\log
D'),
$$
where $F^{p,q}:= R^q f'_*( \Omega_{X'/Y'}^p (\log \Delta') \otimes \mathcal
L^{-1})/\text{torsion}$. Set $\mathcal N_0^{p,q} := \ker (\tau^0_{p,q})$.

\subsubsection{Alignment of the $\boldsymbol{\tau^0_{p,q}}$}
\label{sssec:hKS2}

The morphisms $\tau^0_{p,q}$ and $\tau^0_{p-1,q+1}$ can be composed if we
tensor the latter with the identity morphism on $\Omega_{Y'}^1(\log D')$. More
specifically, we consider the following morphisms,
$$
\underbrace{\tau^0_{p,q} \otimes \Id_{\Omega^1_{Y'}(\log D')^{\otimes q}}}_{=:
  \tau_{p,q}} : F^{p,q} \otimes \bigl(\Omega^1_{Y'}(\log D')\bigr)^{\otimes q}
\to F^{p-1,q+1} \otimes \bigl( \Omega_{Y'}^1(\log D') \bigr)^{\otimes q+1},
$$
and their compositions
\begin{equation}\label{eq:tk}
  \underbrace{\tau_{n-k+1,k-1} \circ \cdots \circ \tau_{n-1,1} \circ \tau_{n,0}}_{=:
    \tau^k} : F^{n,0} \to F^{n-k, k} \otimes \bigl(\Omega_{Y'}^1(\log D')
  \bigr)^{\otimes k}.
\end{equation}

\subsubsection{Fundamental facts about $\boldsymbol{\tau^k}$ and $\boldsymbol{\mathcal N_0^{p,q}}$}
\label{ssec:facts}

Theorem~\ref{thm:VZ} is shown by relating the morphism $\tau^0_{p,q}$ with the
structure morphism of a Higgs-bundle coming from the variation of Hodge
structures associated with the family $f^\circ$. Viehweg's positivity results
of push-forward sheaves of relative differentials, as well as Zuo's results on
the curvature of kernels of generalised Kodaira-Spencer maps are the main
input here. Rather than recalling the complicated line of argumentation, we
simply state two central results from the argumentation of \cite{VZ02}.

\begin{fact}[Factorization via symmetric differentials, \protect{\cite[Lem.~4.6]{VZ02}}]\label{fact:VZ46}
  For any $k$, the morphism $\tau^k$ factors via the symmetric differentials
  $\Sym^k \Omega_{Y'}^1(\log D') \subseteq \bigl(\Omega_{Y'}^1(\log D')
  \bigr)^{\otimes k}$. More precisely, the morphism $\tau^k$ takes its image
  in $F^{n-k, k} \otimes \Sym^k \Omega_{Y'}^1(\log D')$.  \qed
\end{fact}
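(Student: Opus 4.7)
The idea is to recognize the morphisms $\tau^0_{p,q}$ of Section~\ref{sssec:hKS1} as the components of a logarithmic Higgs field $\theta$, and then derive the symmetry of $\tau^k$ from the integrability condition $\theta\wedge \theta = 0$. More precisely, I would first identify $\mathcal E := \bigoplus_{p+q = n} F^{p,q}$, together with the morphisms $\tau^0_{p,q}$, as the associated graded logarithmic Higgs bundle of the (logarithmic) variation of Hodge structures attached to $R^n(f')_*\Omega^\bullet_{X'/Y'}(\log \Delta')\otimes \mathcal L^{-1}$ on $Y'$, so that the Higgs field $\theta:\mathcal E\to \mathcal E\otimes \Omega^1_{Y'}(\log D')$ restricted to $F^{p,q}$ coincides with $\tau^0_{p,q}$. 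Unwinding~\eqref{eq:tk} then shows that $\tau^k$ is the restriction to $F^{n,0}$ of the $k$-fold composition $\theta^k := (\theta\otimes \Id^{\otimes(k-1)})\circ \cdots \circ(\theta\otimes \Id)\circ \theta$.

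Next I would prove the integrability identity $\theta\wedge \theta = 0$, i.e.\ the vanishing of the composite $(\theta\otimes \Id)\circ\theta$ after projection to $F^{\bullet-2,\bullet+2}\otimes \Lambda^2 \Omega^1_{Y'}(\log D')$. Away from $D'$ this is a standard consequence of the flatness of the Gauss--Manin connection combined with Griffiths transversality. In the logarithmic setting one extends the identity across $D'$ by analysing the residues of the logarithmic Gauss--Manin connection; equivalently, $\theta\wedge\theta = 0$ is the relation $d_1\circ d_1 = 0$ on the $E_1$-page of the Hodge-to-log-de~Rham spectral sequence of $f'$, which holds by the general spectral-sequence formalism applied to the filtered complex $(\Omega^\bullet_{X'}(\log \Delta'), F^\bullet)$ of Section~\ref{sssec:hKS1}.

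With integrability at hand, the conclusion is formal. The vanishing of $\theta\wedge \theta$ in $\Lambda^2\Omega^1_{Y'}(\log D')$ is precisely the assertion that $(\theta\otimes \Id)\circ \theta$ takes values in the symmetric factor $\Sym^2 \Omega^1_{Y'}(\log D') \subseteq \Omega^1_{Y'}(\log D')^{\otimes 2}$. Applied to each pair of adjacent tensor slots in $\theta^k$, this shows that every adjacent transposition in the symmetric group $S_k$ fixes $\Image(\tau^k)$; since such transpositions generate $S_k$, the image of $\tau^k$ is $S_k$-invariant and therefore lies in $F^{n-k,k}\otimes \Sym^k \Omega^1_{Y'}(\log D')$, as claimed. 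The main difficulty will be to carry out the logarithmic integrability argument in a neighbourhood of $D'$, where Griffiths transversality alone does not suffice and one must control the residue structure of the logarithmic Gauss--Manin connection along the boundary divisor.
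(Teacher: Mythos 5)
The paper gives no proof of Fact~\ref{fact:VZ46} at all --- it is quoted with a \verb!\qed! from \cite[Lem.~4.6]{VZ02}, whose ``rather elementary'' argument is exactly the one you propose: the $\tau^0_{p,q}$ are the graded pieces of a logarithmic Higgs field, the integrability $\theta\wedge\theta=0$ kills the $\Lambda^2\Omega^1_{Y'}(\log D')$-component of any two consecutive factors, and invariance under adjacent transpositions then forces the image of $\tau^k$ into $\Sym^k\Omega^1_{Y'}(\log D')$. Your proposal is correct; the only refinement worth making is that, because of the twist by $\mathcal L^{-1}$, the direct sum of the $F^{p,q}$ is not literally the Higgs bundle of a variation of Hodge structures, so the clean way to get integrability is the purely formal relation $d_1\circ d_1=0$ for the spectral sequence of the filtered complex of Section~\ref{sssec:hKS1} (your second formulation), which also disposes of the worry about the behaviour along $D'$.
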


\begin{cons}
  Using Fact~\ref{fact:VZ46} and the observation that $F^{n,0} \cong
  \sO_{Y'}$, we can therefore view $\tau^k$ as a morphism
  $$
  \tau^k : \sO_{Y'} \longrightarrow F^{n-k, k} \otimes \Sym^k
  \Omega_{Y'}^1(\log D').
  $$
\end{cons}

While the proof of Fact~\ref{fact:VZ46} is rather elementary, the following
deep result is at the core of Viehweg-Zuo's argument. Its role in the proof of
Theorem~\ref{thm:VZ} is comparable to that of the Positivity
Theorem~\ref{thm:Vpos} discussed in Section~\ref{ssec:8B}.

\begin{fact}[Negativity of $\mathcal N_0^{p,q}$, \protect{\cite[Claim~4.8]{VZ02}}]\label{fact:VZ48}
  Given any numbers $p$ and $q$, there exists a number $k$ and an invertible
  sheaf $\mathcal A' \in \Pic(Y')$ of Kodaira-Iitaka dimension $\kappa(\mathcal A') \geq
  \Var(f^0)$ such that $(\mathcal A')^* \otimes \Sym^k \bigl( (\mathcal N_0^{p,q})^*
  \bigr)$ is generically generated. \qed
\end{fact}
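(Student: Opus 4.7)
The plan is to identify the data $(\bigoplus_{p+q=n} F^{p,q},\ \oplus\, \tau^0_{p,q})$ with (a twist of) the graded logarithmic Higgs bundle attached to the middle-dimensional variation of Hodge structures carried by the family $f^\circ$. Once this identification is in place, the required negativity becomes a combination of two independent inputs: Zuo's curvature estimate for the kernel of a logarithmic Higgs field, and Viehweg's positivity theorems, which provide the invertible subsheaf $\mathcal{A}'$ whose Kodaira--Iitaka dimension detects the variation.

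First, I would pass to an auxiliary semistable reduction. By standard semistable reduction, after a generically finite base change $\widetilde Y \to Y'$ one obtains a smooth family $\widetilde f \colon \widetilde X \to \widetilde Y$ with unipotent local monodromies around the boundary, to which Schmid's nilpotent orbit theorem and the work of Steenbrink apply. In particular, $R^n \widetilde f_*(\mathbb{C})$ extends to a polarized logarithmic VHS on a smooth compactification, and Deligne's canonical extension provides a locally free logarithmic Higgs bundle $(E,\theta)$ on $(\widetilde Y, \widetilde D)$ whose graded pieces agree with $R^q \widetilde f_*\bigl(\Omega^p_{\widetilde X/\widetilde Y}(\log \widetilde \Delta)\bigr)$. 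A formal check (using the definition of $F^{p,q}$ and that $\mathcal{L}^{-1}$ contributes a shift) identifies the pullback of the $\tau^0_{p,q}$ with the corresponding graded pieces of the Higgs field $\theta$, up to a fixed invertible twist.

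Next, I would apply Zuo's theorem on negativity of Higgs kernels (see \cite{Zuo00}): for a polarized logarithmic VHS, the Hodge metric restricted to $\ker \theta$ has seminegative curvature with at most logarithmic singularities along the boundary. In algebraic terms this means that the dual of the kernel is weakly positive in the sense of Viehweg. Descending along $\widetilde Y \to Y'$, one obtains the corresponding weak positivity statement for $(\mathcal{N}_0^{p,q})^*$ on $Y'$. In parallel, Viehweg's positivity combined with his fibre-product trick produces an invertible sheaf $\mathcal{A}' \hookrightarrow \Sym^N f'_*(\omega_{X'/Y'}^{\otimes \nu})$ for some $N, \nu \gg 0$, with $\kappa(\mathcal{A}') \geq \Var(f^\circ)$; the Kodaira-Iitaka dimension jumps up precisely to record the variation. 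Applying the Higgs field iteratively to $\mathcal{A}'$ and using Fact~\ref{fact:VZ46} to factor through symmetric rather than tensor powers, one realises a chosen power of $\mathcal{A}'$ as a subsheaf of $F^{n-k,k} \otimes \Sym^k\Omega^1_{Y'}(\log D')$. Dualising the portion that lands in $\mathcal{N}_0^{p,q}$ and combining with the weak positivity of $(\mathcal{N}_0^{p,q})^*$ gives generic generation of $(\mathcal{A}')^* \otimes \Sym^k((\mathcal{N}_0^{p,q})^*)$ for $k$ large and divisible enough.

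The main obstacle, and the technical heart of Viehweg--Zuo, is controlling the logarithmic behaviour of the Hodge metric along $D'$ on the original partial compactification $Y'$, rather than on the auxiliary $\widetilde Y$: one must ensure that Zuo's seminegativity survives descent and that the residues of $\theta$ along $D'$ do not destroy the curvature estimate. Viehweg--Zuo handle this via a sequence of cyclic covers ramified along sections of high powers of an ample line bundle on $Y$, chosen to absorb both the non-unipotent part of the monodromy and the auxiliary contribution coming from $\mathcal{L}^{-1}$. A secondary, purely bookkeeping obstacle is to match the numerical index $(p,q)$ and the iteration number $k$ so that the composed map $\tau^k$ factors through the correct graded piece; this is where the consequence drawn from Fact~\ref{fact:VZ46} becomes essential.
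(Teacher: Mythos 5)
First, a point of reference: the survey does not prove this statement at all --- it is quoted as a black box from \cite[Claim~4.8]{VZ02}, with the explicit remark that the ``complicated line of argumentation'' of \cite{VZ02} is not being recalled. So your sketch has to be measured against the original Viehweg--Zuo argument. Against that standard, you have correctly assembled the two main inputs (Zuo's curvature estimate for kernels of logarithmic Higgs fields \cite{Zuo00}, and Viehweg's positivity of push-forwards, Theorem~\ref{thm:Vpos}), and you correctly point to unipotent reduction and to cyclic coverings as the locus of the technical work.

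There are, however, two genuine problems. First, the identification in your opening paragraph is not correct as stated: because of the twist by $\mathcal L^{-1}=\bigl(\Omega^n_{X'/Y'}(\log \Delta')\bigr)^{-1}$, the system $\bigl(\bigoplus F^{p,q}, \bigoplus \tau^0_{p,q}\bigr)$ is \emph{not} a twist of the Higgs bundle of the variation of Hodge structures carried by $f^\circ$ itself. In \cite{VZ02} the comparison is with the Hodge bundles $(E^{p,q},\theta)$ of an \emph{auxiliary} family, obtained as a cyclic cover of $X$ ramified along the zero divisor of a section of (roughly) a high power of $\omega_{X/Y}$ twisted by the pull-back of a negative power of $\mathcal A'$; the existence of that section is exactly what Theorem~\ref{thm:Vpos} provides, and this covering construction is the \emph{only} mechanism by which the sheaf $\mathcal A'$ with $\kappa(\mathcal A')\geq \Var(f^\circ)$ enters the comparison maps between the $F^{p,q}$ and the $E^{p,q}$, and hence the statement of the Fact. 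Second, and more seriously, the derivation in your third paragraph --- iterating the Higgs field on $\mathcal A'$, factoring through symmetric powers via Fact~\ref{fact:VZ46}, landing in $F^{n-k,k}\otimes \Sym^k\Omega^1_{Y'}(\log D')$, and then ``dualising the portion that lands in $\mathcal N_0^{p,q}$'' --- is not a proof of Fact~\ref{fact:VZ48}; it is precisely the argument of Section~\ref{ssec:eopVZ} by which Fact~\ref{fact:VZ48} is \emph{consumed} in order to deduce Theorem~\ref{thm:VZ}. Invoking it here makes your argument circular. The actual proof runs in the opposite direction: one shows that $\mathcal N_0^{p,q}=\ker(\tau^0_{p,q})$ is carried into $\ker(\theta)$, up to the twist by $\mathcal A'$ built into the covering, and then applies Zuo's theorem that $\ker(\theta)$ is seminegative, i.e.\ that suitable symmetric powers of its dual, after an ample twist, are generically generated.
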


\subsubsection{End of proof}
\label{ssec:eopVZ}

To end the sketch of proof, we follow \cite[p.~315]{VZ02} almost verbatim. By
Fact~\ref{fact:VZ48}, the trivial sheaf $F^{n,0} \cong \sO_{Y'}$ cannot lie in
the kernel $\mathcal N_0^{n,0}$ of $\tau^1 = \tau^0_{n,0}$. We can therefore
set $1 \leq m$ to be the largest number with $\tau^{m}(F^{n,0}) \not = \{0\}$.
Since $m$ is maximal, $\tau^{m+1} = \tau_{n-m,m} \circ \tau^{m} \equiv 0$ and
$$
\Image(\tau^{m}) \subseteq \ker(\tau_{n-m,m}) = \mathcal N_0^{n-m,m}\otimes \Sym^m
\Omega^1_{Y'}(\log D').
$$
In other words, $\tau^{m}$ gives a non-trivial map
$$
\tau^{m}: \sO_{Y'} \cong F^{n,0} \longrightarrow \mathcal N_0^{n-m,m}\otimes \Sym^m
\Omega^1_{Y'}(\log D').
$$
Equivalently, we can view $\tau^{m}$ as a non-trivial map
\begin{equation}\label{eq:end}
  \tau^{m}: (\mathcal N_0^{n-m,m})^*  \longrightarrow \Sym^m \Omega^1_{Y'}(\log D').
\end{equation}
By Fact~\ref{fact:VZ48}, there are many morphisms $\mathcal A' \to \Sym^k \bigl(
(\mathcal N_0^{n-m, m})^* \bigr)$, for $k$ large enough. Together
with~\eqref{eq:end}, this gives a non-zero morphism $\mathcal A' \to \Sym^{k\cdot m}
\Omega^1_{Y'}(\log D')$.

We have seen in Remark~\ref{rem:extoS} that the sheaf $\mathcal A' \subseteq
\Sym^{k\cdot m} \Omega^1_{Y'}(\log D')$ extends to a sheaf $\mathcal A \subseteq
\Sym^{k\cdot m} \Omega^1_Y(\log D)$ with $\kappa(\mathcal A) = \kappa(\mathcal A') \geq
\Var(f^\circ)$. This ends the proof of Theorem~\ref{thm:VZ}. \qed

\subsection{Open problems}
 	
In spite of its importance, little is known about further properties that the
Viehweg-Zuo sheaves $\mathcal A$ might have.

\begin{question}
  For families of higher-dimensional manifolds, how does the Viehweg-Zuo
  construction behave under base change? Does it satisfy any universal
  properties at all? If not, is there a ``natural'' positivity result for base
  spaces of families that does satisfy good functorial properties?
\end{question}

In the setup of Theorem~\ref{thm:VZ}, if $Z^\circ \subset Y^\circ$ is any
closed submanifold, then the associated Viehweg-Zuo sheaves $\mathcal A$,
constructed for the family $f^\circ : X^\circ \to Y^\circ$, and $\mathcal
A_Z$, constructed for the restricted family $f^\circ_Z : X^\circ
\times_{Y^\circ} Z^\circ \to Z^\circ$, may differ. In particular, it is not
clear that $\mathcal A_Z$ is the restriction of $\mathcal A$, and the sheaves
$\mathcal A$ and $\mathcal A_Z$ may live in different symmetric products of
their respective $\Omega^1$'s.

One likely source of non-compatibility with base change is the choice of the
number $m$ in Section~\ref{ssec:eopVZ} (``largest number with
$\tau^{m}(F^{n,0}) \not = \{0\}$''). It seems unlikely that this definition
behaves well under base change.

\begin{question}
  For families of higher-dimensional manifolds, are there distinguished
  subvarieties in moduli space that have special Viehweg-Zuo sheaves, perhaps
  contained in particularly high/low symmetric powers of $\Omega^1$? Does the
  lack of a restriction morphism induce a geometric structure on the moduli
  space?
\end{question}

The refinement of the Viehweg-Zuo Theorem, presented in
Theorem~\ref{thm:VZimproved} above, turns out to be important for the
applications that we have in mind. It is, however, not clear to us if the
sheaf $\mathcal B$ which appears in Theorem~\ref{thm:VZimproved} is really
optimal.

\begin{question}
  Prove that the sheaf $\mathcal B \subseteq \Omega^1_Y(\log D)$ is the
  smallest sheaf possible for which Theorem~\ref{thm:VZimproved} holds, or
  else find the smallest possible sheaf. For instance, does
  Theorem~\ref{thm:VZimproved} admit a natural improvement if we replace
  $\Omega^1_Y(\log D)$ by a suitable sheaf of orbifold differentials, using
  Campana's language of geometric orbifolds?
\end{question}

\section{Techniques II: Reflexive differentials on singular spaces}
\label{sec:ext}

\subsection{Motivation}
\label{ssec:motiv}

\subsubsection{A special case of the Viehweg conjecture}

To motivate the results presented in this section, let $f^\circ : X^\circ \to
Y^\circ$ be a smooth, projective family of canonically polarised varieties
over a smooth, quasi-projective base manifold, and assume that the family
$f^\circ$ is of maximal variation, i.e., that $\Var(f^\circ) = \dim
Y^\circ$. As before, choose a smooth compactification $Y \supseteq Y^\circ$
such that $D := Y \setminus Y^\circ$ is a divisor with only simple normal
crossings.

To prove Viehweg's conjecture, we need to show that the logarithmic Kodaira
dimension of $Y^\circ$ is maximal, i.e., that $\kappa(Y^\circ) = \dim
Y^\circ$. In particular, we need to rule out the possibility that
$\kappa(Y^\circ)=0$. As we will see in the proof of
Proposition~\ref{prop:VZec1} below, a relatively elementary argument exists in
cases where the Picard number of $Y$ is one, $\rho(Y)=1$. We refer the reader
to \cite[Sect.~I.1]{HL97} for the notion of semistability and for a discussion
of the Harder-Narasimhan filtration used in the proof.

\begin{myprop}[Partial answer to Viehweg's conjecture in case $\rho(Y)=1$]\label{prop:VZec1}
  In the setup described above, if we additionally assume that $\rho(Y)=1$,
  then $\kappa(Y^\circ) \not = 0$.
\end{myprop}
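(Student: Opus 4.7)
The plan is to derive a contradiction from the assumption $\kappa(Y^\circ) = 0$ by combining the Viehweg-Zuo Theorem~\ref{thm:VZ} with the classical Bogomolov-Sommese Vanishing Theorem~\ref{thm:classBSv} and the preservation of slope-semistability under symmetric powers in characteristic zero.

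First, I would use the Picard number hypothesis to pin down $K_Y + D$ numerically. Assuming $\kappa(Y^\circ) = 0$, the divisor $K_Y + D$ is $\mathbb Q$-effective but not big. With $\rho(Y) = 1$, every effective $\mathbb Q$-divisor is numerically proportional to a single ample class $H$, so non-bigness forces $K_Y + D$ to be numerically trivial. Setting $n := \dim Y$, the slope of logarithmic differentials with respect to $H$ therefore vanishes: $\mu_H(\Omega^1_Y(\log D)) = \deg_H(K_Y+D)/n = 0$.

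Next, I would show that $\Omega^1_Y(\log D)$ is $H$-slope-semistable. Suppose not; then there is a saturated subsheaf $\mathcal F \subsetneq \Omega^1_Y(\log D)$ with $\mu_H(\mathcal F) > 0$ and $p := \operatorname{rank}(\mathcal F) < n$. The determinant embeds as $\det \mathcal F \hookrightarrow \wedge^p \Omega^1_Y(\log D) = \Omega^p_Y(\log D)$, so Theorem~\ref{thm:classBSv} gives $\kappa(\det \mathcal F) \le p < n$. On the other hand, $\deg_H(\det \mathcal F) = p \cdot \mu_H(\mathcal F) > 0$ together with $\rho(Y) = 1$ forces $\det \mathcal F$ to be ample and hence big, i.e.\ $\kappa(\det \mathcal F) = n$. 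This contradiction establishes semistability.

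Finally, I would invoke the fact that in characteristic zero, slope-semistability is preserved under symmetric products, so $\Sym^m \Omega^1_Y(\log D)$ is $H$-semistable of slope zero for every $m > 0$. Any invertible subsheaf $\sA$ of it therefore satisfies $\deg_H(\sA) \le 0$, and $\rho(Y) = 1$ then forces $\kappa(\sA) \le 0$. This directly contradicts Theorem~\ref{thm:VZ}, which produces some $m > 0$ and an invertible $\sA \subseteq \Sym^m \Omega^1_Y(\log D)$ with $\kappa(\sA) \ge \Var(f^\circ) = \dim Y^\circ \ge 1$. The only real subtlety lies in the semistability step: one must combine Bogomolov-Sommese with $\rho(Y)=1$ to exclude destabilizing subsheaves, and one must appeal to the preservation of slope-semistability under $\Sym^m$, precisely the Harder-Narasimhan input flagged in the statement.
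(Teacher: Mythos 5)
Your argument is correct and uses exactly the same ingredients as the paper's proof: numerical triviality of $K_Y+D$ from $\kappa=0$ and $\rho(Y)=1$, preservation of slope-semistability under symmetric powers in characteristic zero, Bogomolov--Sommese applied to the determinant of a destabilising subsheaf, and the Viehweg--Zuo sheaf to force the contradiction. The only difference is the (purely logical) arrangement: the paper lets the Viehweg--Zuo sheaf destabilise $\Sym^m\Omega^1_Y(\log D)$ and lands the contradiction on Bogomolov--Sommese, while you use Bogomolov--Sommese to establish semistability first and land the contradiction on Viehweg--Zuo.
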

\begin{proof}
  We argue by contradiction and assume that both $\kappa(Y^\circ) = 0$ and
  that $\rho(Y)=1$.  Let $\mathcal A \subseteq \Sym^m \Omega^1_Y(\log D)$ be
  the big invertible sheaf whose existence is guaranteed by the Viehweg-Zuo
  construction, Theorem~\ref{thm:VZ}. Since $\rho(Y)=1$, the sheaf $\sA$ is
  actually ample.

  As a first step, observe that the log canonical bundle $K_Y+D$ must be
  torsion, i.e., that there exists a number $m' \in \mathbb N^+$ such that
  $\mathcal O_Y\bigl(m' \cdot (K_Y+D) \bigr) \cong \mathcal O_Y$. This follows
  from the assumption that $\kappa(K_Y+D) = 0$ and from the observation that
  on a projective manifold with $\rho=1$, any invertible sheaf which admits a
  non-zero section is either trivial or ample. In particular, we obtain that
  the divisor $K_Y+D$ is numerically trivial.

  Next, let $C \subseteq Y$ be a general complete intersection curve in the
  sense of Mehta-Ramanathan, cf.~\cite[Sect.~II.7]{HL97}. The numerical
  triviality of $K_Y+D$ will then imply that
  $$
  (K_Y+D).C = c_1 \bigl( \Omega^1_Y(\log D) \bigr).C = c_1\bigl(\Sym^m
  \Omega^1_Y(\log D) \bigr).C = 0.
  $$
  On the other hand, since $\sA$ is ample, we have that $c_1(\mathcal A).C >
  0$. In summary, we obtain that the symmetric product sheaf $\Sym^m
  \Omega^1_Y(\log D)$ is not semistable. Since we are working in
  characteristic zero, this implies that the sheaf of Kähler differentials
  $\Omega^1_Y(\log D)$ will likewise not be semistable, and contains a
  destabilising subsheaf $\mathcal B \subseteq \Omega^1_Y(\log D)$ with
  $c_1(\mathcal B).C > 0$, cf.~\cite[Cor.~3.2.10]{HL97}. Since the
  intersection number $c_1(\mathcal B).C$ is positive, the rank $r$ of the
  sheaf $\mathcal B$ must be strictly less than $\dim Y$, and its determinant
  is an ample invertible subsheaf of the sheaf of logarithmic $r$-forms,
  $$
  \det \mathcal B \subseteq \Omega^r_Y(\log D).
  $$
  This, however, contradicts the Bogomolov-Sommese Vanishing
  Theorem~\ref{thm:classBSv} and therefore ends the proof.
\end{proof}

\subsubsection{Application of minimal model theory}

The assumption that $\rho(Y)=1$ is not realistic. In the general situation,
where $\rho(Y)$ can be arbitrarily large, we will apply the minimal model
program to the pair $(Y,D)$. As we will sketch in
Section~\ref{sec:ideaOfProof}, assuming that the standard conjectures of
minimal model theory hold true, a run of the minimal model program for a
suitable choice of a boundary divisor will yield a diagram,
$$
\xymatrix{ %
  Y \ar@{-->}[rrr]^{\lambda}_{\text{minimal model program}} &&& Y_\lambda
  \ar[d]^{\text{fibre space}}_{\pi} \\
  &&& Z_\lambda,
  }
$$
where $\lambda : Y \dasharrow Y_\lambda$ is a birational map whose inverse
does not contract any divisors, and where either $\rho(Y_\lambda) =1$ and
$Z_\lambda$ is a point, or where $Y_\lambda$ has the structure of a proper
Mori-- or Kodaira fibre space. In the first case, we can try to copy the proof
of Proposition~\ref{prop:VZec1} above. In the second case, we can use the
fibre structure and try to argue inductively.

The main problem that arises when adopting the proof of
Proposition~\ref{prop:VZec1} is the presence of singularities. Both the space
$Y_\lambda$ and the cycle-theoretic image divisor $D_\lambda \subset Y_\lambda$
will generally be singular, and the pair $(Y_\lambda, D_\lambda)$ will generally
be dlt. This leads to two difficulties.

\begin{enumerate}
\item The sheaf $\Omega^1_{Y_\lambda}(\log D_\lambda)$ of logarithmic Kähler
  differentials is generally not pure in the sense of
  \cite[Sect.~1.1]{HL97}. Accordingly, there is no good notion of stability
  that would be suitable to construct a Harder-Narasimhan filtration.

\item\label{X} The Viehweg-Zuo construction does not work for singular
  varieties. The author is not aware of any method suitable to prove
  positivity results for Kähler differentials, or prove the existence of
  sections in any symmetric product of $\Omega^1_{Y_\lambda}(\log D_\lambda)$.
\end{enumerate}

The aim of the present Section~\ref{sec:ext} is to show that that both
problems can be overcome if we replace the sheaf $\Omega^1_{Y_\lambda}(\log
D_\lambda)$ of Kähler differentials by its double dual
$\Omega^{[1]}_{Y_\lambda}(\log D_\lambda) := \bigl( \Omega^1_{Y_\lambda}(\log
D_\lambda) \bigr)^{**}$. We refer to \cite[Sect.~1.6]{Reid87} for a discussion
of the double dual in this context, and to \cite[II.~Sect.~1.1]{OSS} for a
thorough discussion of reflexive sheaves. The following notation will be
useful in the discussion.

\begin{notation}[Reflexive tensor operations]\label{not:relfxive}
  Let $X$ be a normal variety and $\mathcal A$ a coherent sheaf of $\mathcal
  O_X$-modules. Given any number $n\in \mathbb N$, set $\mathcal A^{[n]} :=
  (\mathcal A^{\otimes n})^{**}$, $\Sym^{[n]} \mathcal A := (\Sym^n \mathcal
  A)^{**}$.  If $\pi: X' \to X$ is a morphism of normal varieties, set
  $\pi^{[*]}(\mathcal A) := \bigl( \pi^*\mathcal A \bigr)^{**}$. In a similar
  vein, set $\Omega^{[p]}_X := \bigl( \Omega^p_X \bigr)^{**}$ and
  $\Omega^{[p]}_X(\log D) := \bigl( \Omega^p_X (\log D) \bigr)^{**}$.
\end{notation}

\begin{notation}[Reflexive differential forms]
  A section in $\Omega^{[p]}_X$ or $\Omega^{[p]}_X(\log D)$ will be called a
  \emph{reflexive form} or a \emph{reflexive logarithmic form}, respectively.
\end{notation}

\begin{fact}[Torsion freeness and Harder-Narasimhan filtration]
  Reflexive sheaves are torsion free and therefore pure. In particular, a
  Harder-Narasimhan filtration exists for $\Omega^{[p]}_X(\log D)$ and for the
  symmetric products $\Sym^{[n]} \Omega^1_X(\log D)$.
\end{fact}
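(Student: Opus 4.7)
My plan is to prove the two assertions in turn; both reduce to standard facts about coherent sheaves on integral schemes.

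For the first, I would show more generally that on any integral scheme $X$ the dual $\mathcal{G}^{*}$ of any coherent sheaf $\mathcal{G}$ is torsion free. A local section $s \in \mathcal{G}^{*}(U)$ is by definition a morphism $\mathcal{G}|_U \to \mathcal{O}_U$; if it is annihilated by some nonzero $f \in \mathcal{O}_X(U)$, then $f \cdot s(\sigma) = 0$ in $\mathcal{O}_X(U)$ for every local section $\sigma$ of $\mathcal{G}$, and since $X$ is integral this forces $s(\sigma) = 0$, hence $s = 0$. Taking $\mathcal{G} = \mathcal{F}^{*}$ and using the isomorphism $\mathcal{F} \cong \mathcal{F}^{**}$ then shows that every reflexive sheaf is torsion free. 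Purity is automatic on the normal variety $X$: a torsion free coherent sheaf has only the generic point as associated point, so all its associated primes share the same dimension $\dim X$, matching the definition of purity in \cite[Def.~1.1.2]{HL97}.

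For the second assertion, I would simply observe that $\Omega^{[p]}_X(\log D)$ and $\Sym^{[n]} \Omega^1_X(\log D)$ are reflexive by construction (see Notation~\ref{not:relfxive}), hence pure by the previous step. After fixing an ample polarization on $X$, the existence and uniqueness of a Harder-Narasimhan filtration with semistable quotients of strictly decreasing slope then follows directly from \cite[Thm.~1.3.4]{HL97}, whose proof requires only purity of the input sheaf and a well-defined slope function.

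The main obstacle is therefore bookkeeping rather than mathematics: the slope function in \cite{HL97} is phrased for sheaves on smooth polarised projective varieties, whereas here $X$ may be singular. This is harmless, however, since first Chern classes of reflexive sheaves on a normal variety $X$ are well-defined via restriction to the smooth locus $X_{\rm reg} \subseteq X$, whose complement has codimension at least two, so slopes and the HN machinery extend without modification.
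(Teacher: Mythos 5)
Your argument is correct, and it is exactly the standard justification that the paper leaves implicit --- the Fact is stated without proof. The chain (the dual of any coherent sheaf on an integral scheme is torsion free; a reflexive sheaf is the dual of its own dual, hence torsion free; torsion free on an integral variety implies pure of maximal dimension; purity is all that \cite[Thm.~1.3.4]{HL97} needs) is complete, and your closing remark about defining slopes of reflexive sheaves on a normal, possibly singular $X$ via the smooth locus (equivalently, via Mehta--Ramanathan complete intersection curves, which is how the paper actually uses the filtration in Proposition~\ref{prop:VZec1} and Claim~\ref{claim:55}) correctly disposes of the only point where \cite{HL97}'s smoothness hypotheses might seem to matter.
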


\begin{fact}[Extension over small sets]\label{fact:extension1}
  If $X$ is a normal space, if $\mathcal A$ is any reflexive sheaf on $X$ and
  if $Z \subset X$ any set of $\codim_X Z \geq 2$, then the restriction map
  $$
  H^0 \bigl( X,\, \mathcal A \bigr) \to H^0 \bigl( X\setminus Z,\, \mathcal A
  \bigr)
  $$
  is in fact isomorphic. We say that ``sections in $\mathcal A$ extend over the
  small set $Z$''.

  If $U := X \setminus Z$ is the complement of $Z$, with inclusion map $\iota
  : U \to X$, it follows immediately that $\mathcal A = \iota_*(\mathcal
  A|_U)$. In a similar vein, if $\mathcal B_U$ is any locally free sheaf on
  $U$, its push-forward sheaf $\iota_*(\mathcal B_U)$ will always be
  reflexive.
\end{fact}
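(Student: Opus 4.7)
The plan is to reduce everything to the algebraic Hartogs lemma for the structure sheaf of a normal variety, and then to bootstrap to arbitrary reflexive sheaves by writing them as sheaves of homomorphisms into $\mathcal{O}_X$.

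First, I would verify the prototypical case $\mathcal{A} = \mathcal{O}_X$. Because $X$ is normal, Serre's criterion gives that $\mathcal{O}_X$ satisfies the $S_2$-property, and this is precisely the statement that every regular function on $X \setminus Z$ extends uniquely across a subset $Z$ of codimension at least two. In sheaf-theoretic language, this yields $\iota_* \mathcal{O}_U = \mathcal{O}_X$, which is exactly the desired extension property for the structure sheaf.

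Second, I would bootstrap to an arbitrary coherent reflexive sheaf $\mathcal{A}$ by using the identification $\mathcal{A} = \mathcal{H}\!om_{\mathcal{O}_X}(\mathcal{A}^*, \mathcal{O}_X)$. Giving a section $s \in H^0(U, \mathcal{A})$ is equivalent to giving a morphism $\varphi_s : \mathcal{A}^*|_U \to \mathcal{O}_U$. To extend $\varphi_s$, I would work locally on an affine open $V \subseteq X$, choose finitely many generators of $\mathcal{A}^*|_V$, and observe that their images under $\varphi_s$ are regular functions on $V \cap U$; by the first step these extend to regular functions on $V$. The $\mathcal{O}_V$-linear relations among the generators are polynomial identities that persist after extension because $V$ is reduced and $V \cap U$ is dense in $V$, so one obtains a well-defined morphism $\mathcal{A}^*|_V \to \mathcal{O}_V$. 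Gluing these local extensions yields the desired global section, proving $\mathcal{A} = \iota_*(\mathcal{A}|_U)$.

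Third, for the push-forward statement: if $\mathcal{B}_U$ is locally free of finite rank on $U$, trivialize $\mathcal{B}_U$ over affine opens covering $U$, extend each trivialization to affine opens of $X$ using the first step, and glue these extensions to obtain a coherent subsheaf of the sheaf of rational sections on $X$. This coherent sheaf agrees with $\iota_* \mathcal{B}_U$ on $U$, and the Hartogs property established in the first step forces equality on all of $X$. Reflexivity follows because the result inherits $S_2$ from $\mathcal{O}_X$ and agrees with a locally free sheaf on the codimension-two complement $U$; a standard criterion (for instance \cite[II.1.1.12]{OSS}) then identifies it with its double dual.

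The hard part will be establishing coherence of $\iota_* \mathcal{B}_U$ in the third step: push-forwards along open immersions are almost never coherent in general, and the argument genuinely exploits the interplay between the codimension hypothesis on $Z$ and the local freeness of $\mathcal{B}_U$. The remaining statements all follow cleanly from the $S_2$-property of $\mathcal{O}_X$, which is the beating heart of the proof.
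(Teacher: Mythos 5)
The paper states this as a \emph{Fact} and offers no proof of its own (it refers to \cite[II.~Sect.~1.1]{OSS} for background on reflexive sheaves), so there is nothing to compare against; your proposal has to stand on its own. Your first two steps do: reducing to the algebraic Hartogs lemma for $\mathcal O_X$ via Serre's criterion, and then bootstrapping to a reflexive $\mathcal A$ by viewing it as $\mathcal{H}om(\mathcal A^*, \mathcal O_X)$ and extending the images of local generators of $\mathcal A^*$, is exactly the standard argument, and your check that the syzygies persist (density of $V \cap U$ plus reducedness) is the right point to verify. Injectivity of the restriction map, which you do not mention, is immediate from torsion-freeness of $\mathcal A$ and density of $U$.

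The third step, however, has a genuine gap, and it sits precisely where you locate ``the hard part''. The opens over which $\mathcal B_U$ is trivial are opens \emph{of $U$}; none of them contains a point of $Z$, so there is nothing to ``extend each trivialization to affine opens of $X$'' --- the proposed mechanism does not produce any candidate module of sections at the points of $Z$ where coherence actually has to be checked. The standard repair is different: since $U$ is an open subscheme of a noetherian scheme, the coherent sheaf $\mathcal B_U$ extends to \emph{some} coherent sheaf $\mathcal B'$ on $X$ (this is general nonsense, with no control over $\mathcal B'$ along $Z$). Set $\mathcal B := (\mathcal B')^{**}$. Dualization commutes with restriction to opens, so $\mathcal B|_U = \mathcal B_U^{**} = \mathcal B_U$, and $\mathcal B$ is coherent and reflexive on $X$. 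Your second step, applied to $\mathcal B$, now gives $\mathcal B = \iota_*(\mathcal B|_U) = \iota_*\mathcal B_U$, which yields both coherence and reflexivity of the push-forward in one stroke. In other words, the coherence of $\iota_*\mathcal B_U$ is not proved by gluing trivializations but by exhibiting it \emph{a posteriori} as a sheaf already known to be coherent; with that substitution your outline becomes a complete proof.
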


\subsubsection{Outline of this section}

It follows almost by definition that sheaves of reflexive differentials have
very good push-forward properties. In Section~\ref{ssec:pfwd} we will use
these properties to overcome one of the problems mentioned above and to
produce Viehweg-Zuo sheaves of reflexive differentials on singular
spaces. Perhaps more importantly, we will in Section~\ref{ssec:pb} recall
extension results for log canonical varieties. These results show that
reflexive differentials often admit a pull-back map, similar to the standard
pull-back of Kähler differentials. A generalisation of the Bogomolov-Sommese
vanishing theorem to log canonical varieties follows as a corollary. 

In Section~\ref{ssec:res}, we recall that some of the most important
constructions known for logarithmic differentials on snc pairs also work for
reflexive differentials on dlt pairs. This includes the existence of a residue
sequence. For our purposes, this makes reflexive differentials almost as
useful as regular differentials in the theory of smooth spaces. As we will
roughly sketch in Section~\ref{sec:ideaOfProof}, these results will allow to
adapt the proof of Proposition~\ref{prop:VZec1} to the singular setup, and
will give a proof of Viehweg's Conjecture~\ref{conj:Viehweg}, at least for
families over base manifolds of dimension $\leq 3$. Section~\ref{ssec:IOPpb}
gives a brief sketch of the proof of the pull-back result of
Section~\ref{ssec:pb}. We end by mentioning a few open problems and
conjectures.

Some of the material presented in the current Section~\ref{sec:ext}, including
Section~\ref{ssec:res} and all the illustrations, is taken without much
modification from the paper \cite{GKKP10}. Section~\ref{ssec:pfwd} follows the
paper \cite{KK08c}.

\subsection{Existence of a push-forward map}
\label{ssec:pfwd}

Fact~\ref{fact:extension1} implies that any Viehweg-Zuo sheaf which exists on
a pair $(Z, \Delta)$ of a smooth variety and a reduced divisor with simple
normal crossing support immediately implies the existence of a Viehweg-Zuo
sheaf of reflexive differentials on any minimal model of $(Z, \Delta)$, and
that the Kodaira-Iitaka dimension only increases in the process.  To formulate
the result precisely, we briefly recall the definition of the Kodaira-Iitaka
dimension for reflexive sheaves.

\begin{defn}[\protect{Kodaira-Iitaka dimension of a sheaf, \cite[Not.~2.2]{KK08c}}]\label{def:KIdim}
  Let $Z$ be a normal projective variety and $\sA$ a reflexive sheaf of rank
  one on $Z$.  If $h^0\bigl(Z,\, \sA^{[n]}\bigr) = 0$ for all $n \in \mathbb
  N$, then we say that $\sA$ has Kodaira-Iitaka dimension $\kappa(\sA) :=
  -\infty$.  Otherwise, set
  $$
  M := \bigl\{ n\in \mathbb N \,|\, h^0\bigl(Z,\, \sA^{[n]}\bigr)>0\bigr\},
  $$
  recall that the restriction of $\sA$ to the smooth locus of $Z$ is locally
  free and consider the natural rational mapping
  $$
  \phi_n : Z \dasharrow \mathbb P\bigl(H^0\bigl(Z,\, \sA^{[n]}\bigr)^*\bigr)
  \text{ for each } n \in M.
  $$
  The Kodaira-Iitaka dimension of $\sA$ is then defined as
  $$
  \kappa(\sA) := \max_{n \in M} \bigl(\dim \overline{\phi_n(Z)}\bigr).
  $$
\end{defn}

With this notation, the main result concerning the push-forward is then
formulated as follows.

\begin{myprop}[\protect{Push forward of Viehweg-Zuo sheaves, \cite[Lem.~5.2]{KK08c}}]\label{lem:pushdownA}
  Let $(Z, \Delta)$ be a pair of a smooth variety and a reduced divisor with
  simple normal crossing support. Assume that there exists a reflexive sheaf
  $\sA \subseteq \Sym^{[n]} \Omega^1_Z(\log \Delta)$ of rank one. If $\lambda
  : Z \dasharrow Z'$ is a birational map whose inverse does not contract any
  divisor, if $Z'$ is normal and $\Delta'$ is the (necessarily reduced)
  cycle-theoretic image of $\Delta$, then there exists a reflexive sheaf $\sA'
  \subseteq \Sym^{[n]} \Omega^1_{Z'}(\log \Delta')$ of rank one, and of
  Kodaira-Iitaka dimension $\kappa(\sA')\geq \kappa(\sA)$.
\end{myprop}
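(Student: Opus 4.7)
The plan is to exploit the hypothesis on $\lambda^{-1}$ to find a big open subset of $Z'$ on which $\lambda$ restricts to an isomorphism of pairs, to transport $\mathcal{A}$ across this isomorphism, and then to use reflexivity to extend everything across the codimension $\geq 2$ complement.

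\textbf{Step 1: Isomorphism on a big open subset of $Z'$.} Choose a common log resolution $p: W \to Z$ and $q: W \to Z'$, with $W$ smooth. The hypothesis that $\lambda^{-1}$ contracts no divisor forces every $p$-exceptional prime divisor $F \subseteq W$ to be $q$-exceptional as well: otherwise $q(F)$ would be a divisor in $Z'$ whose image under $\lambda^{-1}$ equals $p(F)$, a set of codimension $\geq 2$ in $Z$. Set $Z'_0 := Z' \setminus q(\mathrm{Exc}(q))$; this is a big open subset of $Z'$, and the inclusion $\mathrm{Exc}(p) \subseteq \mathrm{Exc}(q)$ ensures that $\lambda^{-1}|_{Z'_0} = p \circ q^{-1}|_{Z'_0}$ is an open embedding of $Z'_0$ into $Z$ with image $Z_0 \subseteq Z$. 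After further shrinking by at most a codimension $\geq 2$ subset, we may assume $Z'_0 \subseteq (Z')_{\mathrm{reg}}$. Moreover, $\lambda|_{Z_0}$ is an isomorphism of pairs $(Z_0, \Delta|_{Z_0}) \cong (Z'_0, \Delta'|_{Z'_0})$, since components of $\Delta$ contracted by $\lambda$ are disjoint from $Z_0$ and contribute nothing to the cycle-theoretic image $\Delta'$, while non-contracted components correspond bijectively to the components of $\Delta'$.

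\textbf{Step 2: Transport and reflexive extension.} Via the isomorphism from Step~1, $\mathcal{A}|_{Z_0}$ transports to a rank-one reflexive subsheaf $\mathcal{A}'_0 \subseteq \Sym^n \Omega^1_{Z'_0}(\log \Delta'|_{Z'_0})$. Let $\iota: Z'_0 \hookrightarrow Z'$ denote the inclusion and set $\mathcal{A}' := \iota_* \mathcal{A}'_0$; by Fact~\ref{fact:extension1} this is a rank-one reflexive sheaf on $Z'$. Since $Z'_0 \subseteq (Z')_{\mathrm{reg}}$, the equality $\Sym^{[n]} \Omega^1_{Z'}(\log \Delta')|_{Z'_0} = \Sym^n \Omega^1_{Z'_0}(\log \Delta'|_{Z'_0})$ holds, and applying $\iota_*$ to the inclusion on $Z'_0$ produces the desired inclusion $\mathcal{A}' \subseteq \Sym^{[n]} \Omega^1_{Z'}(\log \Delta')$, using Fact~\ref{fact:extension1} once more to identify $\iota_*\bigl(\Sym^{[n]} \Omega^1_{Z'}(\log \Delta')|_{Z'_0}\bigr)$ with $\Sym^{[n]} \Omega^1_{Z'}(\log \Delta')$.

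\textbf{Step 3: Kodaira-Iitaka comparison, and the main obstacle.} For every $m$, the iso from Step~1 combined with Fact~\ref{fact:extension1} gives
\[
H^0\bigl(Z',\, (\mathcal{A}')^{[m]}\bigr) \;=\; H^0\bigl(Z'_0,\, (\mathcal{A}'_0)^m\bigr) \;\cong\; H^0\bigl(Z_0,\, \mathcal{A}^{[m]}|_{Z_0}\bigr),
\]
into which $H^0(Z, \mathcal{A}^{[m]})$ injects, because $\mathcal{A}^{[m]}$ is torsion free and $Z_0$ is dense in $Z$. The induced linear projection $\mathbb{P}\bigl(H^0(Z', (\mathcal{A}')^{[m]})^*\bigr) \dashrightarrow \mathbb{P}\bigl(H^0(Z, \mathcal{A}^{[m]})^*\bigr)$ intertwines the rational maps of Definition~\ref{def:KIdim} via $\phi_m = \mathrm{proj} \circ \phi'_m \circ \lambda$ on $Z_0$, so $\dim \overline{\phi'_m(Z')} \geq \dim \overline{\phi_m(Z_0)} = \dim \overline{\phi_m(Z)}$ for all $m$, yielding $\kappa(\mathcal{A}') \geq \kappa(\mathcal{A})$. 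The only genuinely delicate point in the entire argument is Step~1: one must leverage the hypothesis on $\lambda^{-1}$ to upgrade the information that the indeterminacy locus of $\lambda^{-1}$ has codimension $\geq 2$ into an honest isomorphism of pairs on big open subsets. Once that identification is in place, the rest is a routine application of the principle that rank-one reflexive sheaves on normal varieties are determined by their codimension-one behaviour.
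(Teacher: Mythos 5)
Your proof is correct and follows essentially the same route as the paper's: both reduce to the observation that $\lambda^{-1}$ is an open embedding over a big open subset $U \subseteq Z'$ (you derive this from a common resolution, while the paper cites Zariski's main theorem directly), then transport $\mathcal A$, extend by push-forward using Fact~\ref{fact:extension1}, and compare spaces of sections. The extra care you take with the isomorphism of pairs and with the Kodaira--Iitaka comparison only makes explicit what the paper's one-line conclusion $h^0\bigl(Z',\, \sA'^{[m]}\bigr) \geq h^0\bigl(Z,\, \sA^{[m]}\bigr)$ leaves implicit.
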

\begin{proof}
  The assumption that $\lambda^{-1}$ does not contract any divisors and the
  normality of $Z'$ guarantee that $\lambda^{-1}: Z' \dasharrow Z$ is a
  well-defined embedding over an open subset $U \subseteq Z'$ whose complement
  has codimension $\codim_{Z'} (Z' \setminus U) \geq 2$, cf.~Zariski's main
  theorem \cite[V~5.2]{Ha77}.  In particular, $\Delta'|_U = \bigl(
  \lambda^{-1}|_U \bigr)^{-1}(\Delta)$. Let $\iota: U \to Z'$ denote the
  inclusion and set $\mathcal A' := \iota_* \bigl(
  (\lambda^{-1}|_U)^{*}\mathcal A \bigr)$ ---this sheaf is reflexive by
  Fact~\ref{fact:extension1}. We obtain an inclusion of reflexive sheaves,
  $\sA' \subseteq \Sym^{[n]} \Omega^1_{Z'}(\log \Delta')$. By construction, we
  have that $h^0\bigl(Z',\, \sA'^{[m]}\bigr) \geq h^0(Z,\, \sA^{[m]})$ for all
  $m>0$, hence $\kappa(\sA') \geq \kappa(\sA)$.
\end{proof}

Given the importance of the Viehweg-Zuo construction, Theorem~\ref{thm:VZ}, we
will call the sheaves $\mathcal A$ which appear in
Proposition~\ref{lem:pushdownA} ``Viehweg-Zuo sheaves''.

\begin{notation}[Viehweg-Zuo sheaves]
  Let $(Z, \Delta)$ be a pair of a smooth variety and a reduced divisor with
  simple normal crossing support, and let $n \in \mathbb N$ be any number. A
  reflexive sheaf $\sA \subseteq \Sym^{[n]} \Omega^1_Z(\log \Delta)$ of rank
  one will be called a ``Viehweg-Zuo sheaf''.
\end{notation}

\subsection{Existence of a pull-back morphism, statement and applications}
\label{ssec:pb}

Kähler differentials are characterised by a number of universal properties,
one of the most important being the existence of a pull-back map: if $\gamma :
Z \to X$ is any morphism of algebraic varieties and if $p \in \mathbb N$, then
there exists a canonically defined sheaf morphism
\begin{equation}\label{eq:pbKd}
  d\gamma : \gamma^* \Omega^p_X  \to \Omega^p_Z.
\end{equation}

The following example illustrates that for sheaves of reflexive differentials on
normal spaces, a pull-back map does not exist in general.

\begin{example}[\protect{Pull-back morphism for dualizing sheaves, cf.~\cite[Ex.~4.2]{GKKP10}}]
  Let $X$ be a normal Gorenstein variety of dimension $n$, and let $\gamma : Z
  \to X$ be any resolution of singularities.  Observing that the sheaf of
  reflexive $n$-forms is precisely the dualizing sheaf, $\Omega^{[n]}_X \simeq
  \omega_X$, it follows directly from the definition of canonical
  singularities that $X$ has canonical singularities if and only if a
  pull-back morphism $d\gamma : \gamma^* \Omega^{[n]}_X \to \Omega^{n}_Z$
  exists.
\end{example}

Together with Daniel Greb, Sándor Kovács and Thomas Peternell, the author has
shown that a pull-back map for reflexive differentials always exists if the
target is log canonical.

\begin{mythm}[\protect{Pull-back map for differentials on lc pairs, \cite[Thm.~4.3]{GKKP10}}]\label{thm:generalpullback}
  Let $(X, D)$ be an log canonical pair, and let $\gamma : Z \to X$ be a
  morphism from a normal variety $Z$ such that the image of $Z$ is not
  contained in the reduced boundary or in the singular locus, i.e.,
  $$
  \gamma(Z) \not\subseteq (X,D)_{\rm sing} \cup \supp \lfloor D \rfloor.
  $$
  If $1 \leq p \leq \dim X$ is any index and
  $$
  \Delta := \text{largest reduced Weil divisor contained in }
  \gamma^{-1}\bigl(\text{non-klt locus}\bigr),
  $$
  then there exists a sheaf morphism,
  $$
  d\gamma : \gamma^* \Omega^{[p]}_X(\log \lfloor D \rfloor) \to
  \Omega^{[p]}_Z(\log \Delta),
  $$
  that agrees with the usual pull-back morphism \eqref{eq:pbKd} of Kähler
  differentials at all points $p \in Z$ where $\gamma(p) \not \in (X,D)_{\rm
    sing} \cup \supp \lfloor D \rfloor$.
\end{mythm}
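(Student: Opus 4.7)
The plan is to reduce the construction of the pull-back morphism to the classical case of Kähler differentials on an snc model, and then to descend back to $X$ and $Z$ via the extension theorem for logarithmic differentials on log canonical pairs. First I would set up a resolution diagram: take a log resolution $\pi : \tilde X \to X$ of $(X, \lfloor D \rfloor)$, and let $\tilde D$ denote the reduced sum of the strict transform of $\lfloor D \rfloor$ and the $\pi$-exceptional divisor, so that $(\tilde X, \tilde D)$ is snc. Resolving the indeterminacy of $\pi^{-1}\circ \gamma$, choose a resolution $\sigma : \tilde Z \to Z$ dominating the closure of the graph of $\gamma$ in $Z \times_X \tilde X$; then $\tilde\gamma := \pi^{-1}\circ \gamma \circ \sigma$ becomes a genuine morphism $\tilde Z \to \tilde X$, and after a further blow-up the reduced preimage $\tilde\Delta := \bigl(\tilde\gamma^{-1}(\tilde D)\bigr)_{\mathrm{red}}$ has snc support.

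On the snc models, the standard functoriality of Kähler differentials produces a pull-back morphism
$$
d\tilde\gamma : \tilde\gamma^* \Omega^p_{\tilde X}(\log \tilde D) \;\longrightarrow\; \Omega^p_{\tilde Z}(\log \tilde\Delta).
$$
The crucial descent input is the extension theorem for logarithmic differentials on lc pairs of Greb-Kebekus-Kovács-Peternell, which asserts the natural isomorphism $\pi_* \Omega^p_{\tilde X}(\log \tilde D) \cong \Omega^{[p]}_X(\log \lfloor D \rfloor)$. Using the identity $\pi\circ \tilde\gamma = \gamma\circ\sigma$ and applying $\sigma_*$ to $d\tilde\gamma$, I would obtain a morphism
$$
\gamma^* \Omega^{[p]}_X(\log \lfloor D \rfloor) \;\longrightarrow\; \sigma_* \Omega^p_{\tilde Z}(\log \tilde\Delta),
$$
which on the dense open subset of $Z$ where everything is smooth reproduces the classical Kähler pull-back. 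It remains to identify the image as a subsheaf of $\Omega^{[p]}_Z(\log \Delta)$; by reflexivity of the latter, this reduces to checking the statement on the big open locus $Z^\circ \subseteq Z$ where $Z$ is smooth and $\gamma(Z^\circ)$ avoids $(X,D)_{\mathrm{sing}}\cup \supp \lfloor D \rfloor$. Codimension-one components of $Z \setminus Z^\circ$ either map into $\supp\lfloor D \rfloor$ (in the non-klt locus, hence inside $\Delta$) or into the non-klt part of $(X,D)_{\mathrm{sing}}$ (also inside $\Delta$), while components mapping to the klt locus of $(X,D)$ produce no log poles.

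The hard part will be the extension theorem itself, which is a deep result relying on vanishing theorems, input from the theory of Du Bois singularities, and techniques from the minimal model program: the proof proceeds by reduction to the dlt case via a suitable choice of log resolution and then runs an inductive argument on the dimension of the strata of the dlt boundary, combined with a local analysis near lc centers. A secondary but still technical point is verifying that no ``ghost'' log poles appear along $\sigma$-exceptional divisors when pushing forward to $Z$; this is controlled by the fact that such exceptional divisors map to loci of codimension $\geq 2$ in $X$, where the push-forward of the snc log sheaf is reflexive and agrees with $\Omega^{[p]}_X(\log \lfloor D \rfloor)$ by the extension theorem.
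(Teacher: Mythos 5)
Your proposal follows essentially the same route as the paper and as \cite{GKKP10}: reduce to functoriality of Kähler differentials on snc models via log resolutions, and descend using the isomorphism $\pi_*\Omega^p_{\widetilde{X}}(\log \widetilde{D})\cong\Omega^{[p]}_X(\log\lfloor D\rfloor)$, which is indeed the deep input and is proved exactly as you indicate (Steenbrink-type vanishing via the Du~Bois property of lc pairs, plus a relative MMP, residue maps on dlt pairs, and induction on dimension). One caveat: for the map to land in $\Omega^{[p]}_Z(\log\Delta)$ with $\Delta$ supported only over the non-klt locus, you must take $\widetilde{D}$ to be the largest reduced divisor contained in $\pi^{-1}(\text{non-klt locus})$ rather than the full exceptional locus, and your unproved assertion that divisors of $Z$ mapping into the klt part of $(X,D)_{\rm sing}$ acquire no log poles is precisely the harder half of the extension theorem --- the residue/rational-connectedness/relative-MMP argument of Section~\ref{sec:ext3} --- not a routine codimension count.
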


\begin{rem}
  If follows from the definition of klt, \cite[Def.~2.34]{KM98}, that the
  components of $D$ which appear with coefficient one are always contained in
  the non-klt locus of $(X,D)$. In particular, the divisor $\Delta$ defined in
  Theorem~\ref{thm:generalpullback} always contains the codimension-one part
  of $\gamma^{-1}\bigl( \supp \lfloor D \rfloor \bigr)$.

\end{rem}

The assertion of Theorem~\ref{thm:generalpullback} is rather general and
perhaps a bit involved. For klt spaces, the statement reduces to the
following simpler result.

\begin{mythm}[\protect{Pull-back map for differentials on klt spaces}]
  Let $X$ be a normal klt variety\footnote{More precisely, we should say ``Let
    $X$ be a normal variety such that the pair $(X, \emptyset)$ is
    klt\ldots''}, and let $\gamma : Z \to X$ be a morphism from a normal
  variety $Z$ such that the image $\gamma(Z)$ is not entirely contained in the
  singular locus of $X$.  If $1 \leq p \leq \dim X$ is any index then there
  exists a sheaf morphism,
  $$
  d\gamma : \gamma^* \Omega^{[p]}_X \to \Omega^{[p]}_Z,
  $$
  that agrees on an open set with the usual pull-back morphism of Kähler
  differentials. \qed
\end{mythm}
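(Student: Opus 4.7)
The plan is to derive this statement as an immediate specialization of the more general Theorem~\ref{thm:generalpullback} (pull-back map on lc pairs), taking the boundary divisor to be trivial, $D = \emptyset$. Since a klt variety $X$ is by definition one for which the pair $(X, \emptyset)$ is klt, and since klt implies lc, the pair $(X, \emptyset)$ satisfies the hypotheses of Theorem~\ref{thm:generalpullback}.

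Next, I would check that all the relevant objects degenerate correctly. The round-down $\lfloor \emptyset \rfloor$ is empty, so $\Omega^{[p]}_X(\log \lfloor D \rfloor) = \Omega^{[p]}_X$ and the singular/boundary set in the hypothesis becomes $(X,\emptyset)_{\rm sing} \cup \supp \lfloor \emptyset \rfloor = X_{\rm sing}$, matching exactly the condition $\gamma(Z) \not\subseteq X_{\rm sing}$ in the klt statement. For the target sheaf one must verify that the divisor $\Delta$ defined in Theorem~\ref{thm:generalpullback} is empty. By definition $\Delta$ is the largest reduced Weil divisor contained in $\gamma^{-1}(\text{non-klt locus of }(X,\emptyset))$. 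But since $(X, \emptyset)$ is klt by assumption, its non-klt locus is empty, so $\gamma^{-1}(\emptyset) = \emptyset$ and hence $\Delta = \emptyset$. Consequently $\Omega^{[p]}_Z(\log \Delta) = \Omega^{[p]}_Z$, and the morphism delivered by Theorem~\ref{thm:generalpullback} is precisely the desired reflexive pull-back $d\gamma : \gamma^*\Omega^{[p]}_X \to \Omega^{[p]}_Z$. Its agreement with the usual pull-back of Kähler differentials on the open set where $\gamma(p) \notin X_{\rm sing}$ is inherited directly from the corresponding clause in Theorem~\ref{thm:generalpullback}.

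There is essentially no obstacle; the entire content is a check that the hypotheses and conclusion of the lc theorem collapse correctly when $D=\emptyset$. The only conceptual point worth stressing is that klt with trivial boundary has empty non-klt locus, which is what kills the logarithmic poles on the source $Z$ and yields a genuine (non-logarithmic) reflexive pull-back. All the real work lies upstream in the proof of Theorem~\ref{thm:generalpullback} itself, whose strategy is outlined in Section~\ref{ssec:IOPpb} of the survey.
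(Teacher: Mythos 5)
Your proposal is correct and is exactly what the paper intends: the survey states this klt version immediately after Theorem~\ref{thm:generalpullback} as a direct specialization to $D=\emptyset$, and your verification that the non-klt locus of a klt pair is empty (hence $\Delta=\emptyset$ and no logarithmic poles appear on $Z$) is the only point that needs checking.
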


Extension properties of differential forms that are closely related to the
existence of pull-back maps have been studied in the literature, mostly
considering only special values of $p$. Using Steenbrink's generalization of
the Grauert-Riemenschneider vanishing theorem as their main input, similar
results were shown by Steenbrink and van Straten for varieties $X$ with only
isolated singularities and for $p \leq \dim X-2$, without any further
assumption on the nature of the singularities, \cite[Thm.~1.3]{SS85}. Flenner
extended these results to normal varieties, subject to the condition that $p
\leq \codim X_{\rm sing} - 2$, \cite{Flenner88}. Namikawa proved the extension
properties for $p \in \{1, 2\}$, in case $X$ has canonical Gorenstein
singularities, \cite[Thm.~4]{Namikawa01}. In the case of finite quotient
singularities similar results were obtained in \cite{deJongStarr}. For a log
canonical pair with reduced boundary divisor, the cases $p \in \{1, \dim X-1,
\dim X\}$ were settled in \cite[Thm.~1.1]{GKK08}.

A related setup where the pair $(X,D)$ is snc, and where $\pi : \widetilde{X}
\to X$ is the composition of a finite Galois covering and a subsequent
resolution of singularities has been studied by Esnault and Viehweg. In
\cite{RevI} they obtain in their special setting similar results and
additionally prove vanishing of higher direct image sheaves.

A brief sketch of the proof of Theorem~\ref{thm:generalpullback} is given in
Section~\ref{ssec:IOPpb} below. The proof uses a strengthening of the
Steenbrink vanishing theorem, which follows from local Hodge-theoretic
properties of log canonical singularities, in particular from the fact that
log canonical spaces are Du~Bois. These methods are combined with results
available only for special classes of singularities, such as the recent
progress in minimal model theory and partial classification of singularities
that appear in minimal models.

\subsubsection{Applications}

Theorem~\ref{thm:generalpullback} has many applications useful for moduli
theory. We mention two applications which will be important in our
context. The first application generalises the Bogomolov-Sommese vanishing
theorem to singular spaces.

\begin{cor}[\protect{Bogomolov-Sommese vanishing for lc pairs, \cite[Thm.~7.2]{GKKP10}}]\label{cor:BS}
  Let $(X, D)$ be a log canonical pair, where $X$ is projective. If $\sA
  \subseteq \Omega^{[p]}_X(\log \lfloor D \rfloor)$ is a $\mathbb Q$-Cartier
  reflexive subsheaf of rank one, then $\kappa(\sA) \leq p$.
\end{cor}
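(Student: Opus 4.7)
The strategy is to reduce to the classical Bogomolov–Sommese vanishing theorem (Theorem~\ref{thm:classBSv}) on a log resolution, using the pull-back map for reflexive differentials (Theorem~\ref{thm:generalpullback}) as the bridge between the singular pair $(X,D)$ and a smooth snc pair upstairs. The $\mathbb{Q}$-Cartier hypothesis on $\sA$ is used only at the very end, to convert the bound on the upstairs Kodaira–Iitaka dimension into a bound on $\kappa(\sA)$.

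\smallskip

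\noindent First I would choose a log resolution $\pi : \widetilde{X} \to X$ of the pair $(X,\lfloor D\rfloor)$, so that the exceptional divisor together with the strict transform of $\lfloor D\rfloor$ has simple normal crossing support. Since $(X,D)$ is log canonical, Theorem~\ref{thm:generalpullback} applies to $\pi$ and yields a pull-back morphism
$$
d\pi : \pi^{[*]} \Omega^{[p]}_X(\log \lfloor D \rfloor) \longrightarrow \Omega^p_{\widetilde{X}}(\log \Delta),
$$
where $\Delta$ is the largest reduced Weil divisor inside $\pi^{-1}(\text{non-klt locus})$. By choice of the log resolution $\Delta$ is snc, and $\Omega^p_{\widetilde{X}}(\log \Delta)$ is locally free. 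Pulling back the inclusion $\sA \subseteq \Omega^{[p]}_X(\log \lfloor D \rfloor)$ and composing with $d\pi$ produces a sheaf morphism $\pi^*\sA \to \Omega^p_{\widetilde{X}}(\log \Delta)$ which, by the last clause of Theorem~\ref{thm:generalpullback}, agrees with the usual pull-back of Kähler forms over the open locus where $X$ and $D$ are smooth. In particular, it is generically injective on $\pi^*\sA$. Let $\sA_{\widetilde{X}}$ denote the saturation of its image inside $\Omega^p_{\widetilde{X}}(\log\Delta)$: since $\widetilde{X}$ is smooth and $\sA_{\widetilde{X}}$ is reflexive of rank one, it is invertible, and the classical Bogomolov–Sommese Vanishing Theorem~\ref{thm:classBSv} gives $\kappa(\sA_{\widetilde{X}}) \leq p$.

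\smallskip

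\noindent It remains to show $\kappa(\sA) \leq \kappa(\sA_{\widetilde{X}})$. Here I use the $\mathbb{Q}$-Cartier hypothesis: choose $N > 0$ so that $\sA^{[N]}$ is an honest line bundle on $X$. Then $\pi^{*}\sA^{[N]} = \pi^{[*]}\sA^{[N]}$ is a line bundle on $\widetilde{X}$, and taking the $N$-th reflexive tensor power of the generically injective morphism $\pi^*\sA \to \sA_{\widetilde{X}}$ produces a morphism of line bundles $\pi^{*}\sA^{[N]} \to \sA_{\widetilde{X}}^{\otimes N}$ which is injective (since both sheaves are torsion free and the map is generically nonzero). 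For every $m > 0$ this yields inclusions
$$
H^0\bigl(X,\sA^{[mN]}\bigr) \hookrightarrow H^0\bigl(\widetilde{X}, \pi^{*}\sA^{[mN]}\bigr) \hookrightarrow H^0\bigl(\widetilde{X},\sA_{\widetilde{X}}^{\otimes mN}\bigr),
$$
using surjectivity of $\pi$ for the first inclusion and the injection of line bundles above for the second. Comparing growth rates in $m$ via Definition~\ref{def:KIdim} gives $\kappa(\sA) = \tfrac{1}{N}\kappa(\sA^{[N]}) \leq \kappa(\sA_{\widetilde{X}}) \leq p$, as required.

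\smallskip

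\noindent The main obstacle in the argument is not in this corollary itself but in invoking Theorem~\ref{thm:generalpullback}: once the pull-back morphism for reflexive logarithmic differentials on lc pairs is available, the rest is formal bookkeeping. Within the proof, the only subtle point is verifying that the composed map $\pi^*\sA \to \Omega^p_{\widetilde{X}}(\log\Delta)$ really is generically nonzero (which follows from the compatibility clause of Theorem~\ref{thm:generalpullback}) and that the $\mathbb{Q}$-Cartier hypothesis is genuinely needed so that reflexive powers $\sA^{[N]}$ can be pulled back as honest line bundles, making the comparison of global sections valid.
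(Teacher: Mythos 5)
Your first half reproduces the paper's own argument: pull back via Theorem~\ref{thm:generalpullback}, saturate the image of $\pi^*\sA$ inside the locally free sheaf $\Omega^p_{\widetilde{X}}(\log \Delta)$ to get an invertible $\sA_{\widetilde{X}}$, and apply the classical Theorem~\ref{thm:classBSv}. This is complete when $\sA$ is itself invertible, which is the only case the paper actually proves (it refers to \cite{GKKP10} for the rest). The gap is in your final comparison step for the genuinely $\mathbb Q$-Cartier case. From the morphism $\pi^*\sA \to \sA_{\widetilde{X}}$ you can only form its $N$-th \emph{tensor} power, i.e.\ a map $\pi^*(\sA^{\otimes N}) = (\pi^*\sA)^{\otimes N} \to \sA_{\widetilde{X}}^{\otimes N}$. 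To obtain the map of line bundles $\pi^*(\sA^{[N]}) \to \sA_{\widetilde{X}}^{\otimes N}$ that you invoke, you would need to factor through the natural morphism $\pi^*(\sA^{\otimes N}) \to \pi^*(\sA^{[N]})$. That morphism is an isomorphism only over the preimage of the open set where $\sA$ is locally free; its cokernel is the pullback of a sheaf supported on a codimension-two subset $Z \subseteq X$, and $\pi^{-1}(Z)$ (together with the $\pi$-exceptional locus) is typically \emph{divisorial} in $\widetilde{X}$. Consequently the induced rational map $\pi^*(\sA^{[N]}) \dashrightarrow \sA_{\widetilde{X}}^{\otimes N}$ can have genuine poles along these divisors, and the claimed inclusion $H^0\bigl(\widetilde{X}, \pi^*\sA^{[mN]}\bigr) \hookrightarrow H^0\bigl(\widetilde{X}, \sA_{\widetilde{X}}^{\otimes mN}\bigr)$ is not established: a section of $\sA^{[mN]}$ gives a section of $\sA_{\widetilde{X}}^{\otimes mN}$ only away from a locus whose preimage is a divisor, and nothing forces it to extend without poles. (The identity $\kappa(\sA) = \tfrac{1}{N}\kappa(\sA^{[N]})$ is also a slip — the correct statement is $\kappa(\sA) = \kappa(\sA^{[N]})$ — but that is cosmetic.)

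This is not a small technicality: the Warning that the paper places immediately after its proof says precisely that reflexive tensor powers do not commute with pull-back, that the Kodaira--Iitaka dimension of Definition~\ref{def:KIdim} is therefore not invariant under pulling back to a resolution, and that the invertible-case argument ``does not work without substantial modification'' when $\sA$ is only $\mathbb Q$-Cartier. Your proposal is essentially the invertible-case proof plus an unjustified assertion at exactly the point the Warning identifies. Handling the $\mathbb Q$-Cartier case requires additional input (in \cite{GKKP10} and its precursor \cite{GKK08} this is done with local index-one covers and a cutting-down/induction argument), not just formal bookkeeping with reflexive powers.
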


\begin{rem}[Notation used in Corollary~\ref{cor:BS}]
  The number $\kappa(\sA)$ appearing in the statement of
  Corollary~\ref{cor:BS} is the generalised Kodaira-Iitaka dimension
  introduced in Definition~\ref{def:KIdim}. A reflexive sheaf $\sA$ is rank
  one is called $\mathbb Q$-Cartier if there exists a number $n \in \mathbb
  N^+$ such that the $n$th reflexive tensor product $\sA^{[n]}$ is invertible.
\end{rem}

\begin{proof}[Proof of Corollary~\ref{cor:BS} in a special case]
  We prove Corollary~\ref{cor:BS} only in the special case where the sheaf
  $\sA \subseteq \Omega^{[p]}_X(\log \lfloor D \rfloor)$ is invertible. The
  reader interested in a full proof is referred to the original reference
  \cite{GKKP10}.

  Let $\gamma: Z \to X$ be any resolution of singularities, and let $\Delta
  \subset Z$ be the reduced divisor defined in
  Theorem~\ref{thm:generalpullback} above.  Theorem~\ref{thm:generalpullback}
  will then assert the existence of an inclusion 
  $$
  \gamma^*(\sA) \to \Omega^p_Z(\log \Delta),
  $$
  and the standard Bogomolov-Sommese vanishing result,
  Theorem~\ref{thm:classBSv}, applies to give that $\kappa
  \bigl(\gamma^*(\sA)\bigr) \leq p$. Since $\sA$ is invertible, and since
  $\gamma$ is birational, it is clear that $\kappa \bigl(\gamma^*(\sA)\bigr) =
  \kappa (\sA)$, finishing the proof.
\end{proof}

\begin{warning}
  Taking the double dual of a sheaf does generally \emph{not} commute with
  pulling back. Since reflexive tensor products were used in
  Definition~\ref{def:KIdim} to define the Kodaira-Iitaka dimension of a
  sheaf, it is generally false that the Kodaira-Iitaka dimension stays
  invariant when pulling a sheaf $\sA$ back to a resolution of
  singularities. The proof of Corollary~\ref{cor:BS} which is given in the
  simple case where $\sA$ is invertible does therefore not work without
  substantial modification in the general setup where $\sA$ is only $\mathbb
  Q$-Cartier.
\end{warning}

The second application of Theorem~\ref{thm:generalpullback} concerns
rationally chain connected singular spaces. Rationally chain connected
manifolds are rationally connected, and do not carry differential
forms. Building on work of Hacon and McKernan, \cite{HMcK07}, we show that the
same holds for reflexive forms on klt pairs.

\begin{cor}[\protect{Reflexive differentials on rationally chain connected spaces, \cite[Thm.~5.1]{GKKP10}}]\label{cor:kltRCC}
  Let $(X,D)$ be a klt pair. If $X$ is rationally chain connected, then $X$ is
  rationally connected, and $H^0 \bigl( X,\, \Omega^{[p]}_X \bigr) = 0 $ for
  all $p\in\mathbb N, 1 \leq p \leq \dim X$.
\end{cor}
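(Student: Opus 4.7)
\textbf{The plan} is to reduce both assertions to the smooth case by passing to a resolution, using Theorem~\ref{thm:generalpullback} as the bridge between $X$ and its resolution, and Hacon--McKernan's work as the geometric engine. For the first claim, that $X$ is rationally connected, I would appeal directly to the theorem of Hacon--McKernan from \cite{HMcK07}: a klt projective variety that is rationally chain connected is actually rationally connected. Since $(X,D)$ is klt, this gives the rational connectedness of $X$ for free. Any log resolution $\pi \colon \widetilde X \to X$ then yields a smooth projective variety birational to $X$, and since rational connectedness is a birational invariant for smooth projective varieties, $\widetilde X$ is rationally connected as well.

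For the vanishing statement, by the classical theorem of Campana and Kollár--Miyaoka--Mori, any smooth projective rationally connected variety carries no nonzero holomorphic $p$-forms for $p \geq 1$. In particular,
\[
H^0\bigl(\widetilde X,\, \Omega^p_{\widetilde X}\bigr) = 0 \quad \text{for all } 1 \leq p \leq \dim \widetilde X.
\]
To transfer this to $X$, I would apply Theorem~\ref{thm:generalpullback} (in its klt form, with $D = \emptyset$ and $\Delta = \emptyset$) to obtain a pull-back morphism
\[
d\pi \colon \pi^{*}\Omega^{[p]}_X \longrightarrow \Omega^{p}_{\widetilde X}.
\]
Given any reflexive form $\sigma \in H^0(X, \Omega^{[p]}_X)$, its pullback yields a section of $\pi^{*}\Omega^{[p]}_X$, which $d\pi$ sends to an element of $H^0(\widetilde X, \Omega^p_{\widetilde X})$; this must be zero by the previous paragraph. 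On the open set $U \subseteq X_{\mathrm{reg}}$ where $\pi$ is an isomorphism, $d\pi$ restricts to the standard pull-back of Kähler differentials, so $\sigma|_U = 0$. Since $X$ is normal and $\codim_X(X \setminus U) \geq 2$, Fact~\ref{fact:extension1} applied to the reflexive sheaf $\Omega^{[p]}_X$ forces $\sigma = 0$ on all of $X$.

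\textbf{Main obstacle.} The real content is packed into the two external inputs: the Hacon--McKernan theorem, which supplies the nontrivial upgrade from rational chain connectedness to rational connectedness in the presence of klt singularities, and Theorem~\ref{thm:generalpullback}, which gives the canonical pull-back of reflexive forms across a resolution. The latter is particularly essential here: without a pull-back map defined on \emph{all} of $\widetilde X$ (including over the exceptional locus), there is no way to deduce the vanishing of $\sigma$ from the vanishing of its restriction to $\pi^{-1}(X_{\mathrm{reg}})$, because a priori the form on $\widetilde X$ obtained by pulling back $\sigma|_{X_{\mathrm{reg}}}$ could have poles along the exceptional divisor. Once Theorem~\ref{thm:generalpullback} is granted, the remainder of the argument is essentially a formal computation with reflexive hulls and birational invariance.
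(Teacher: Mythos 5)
Your proof is correct and follows essentially the same route as the paper: Hacon--McKernan for rational connectedness, vanishing of forms on the smooth rationally connected resolution, and Theorem~\ref{thm:generalpullback} to conclude that the map on global sections $H^0(X,\Omega^{[p]}_X)\to H^0(\widetilde X,\Omega^p_{\widetilde X})$ is injective. The paper phrases the last step as generic injectivity of $d\pi$ plus torsion-freeness of $\Omega^{[p]}_X$ rather than via restriction to the isomorphism locus, but the two arguments are interchangeable.
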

\begin{proof}
  Choose a log resolution of singularities, $\pi: \widetilde{X} \to X$ of the
  pair $(X,D)$. Since klt pairs are also dlt, a theorem of Hacon-McKernan,
  \cite[Cor.~1.5(2)]{HMcK07}, applies to show that $X$ and $\widetilde{X}$ are
  both rationally connected. In particular, it follows that $H^0 \bigl(
  \widetilde{X},\, \Omega^p_{\widetilde{X}}) = 0$ for all $p > 0$ by
  \cite[IV.~Cor.~3.8]{K96}.

  Since $(X,D)$ is klt, Theorem~\ref{thm:generalpullback} asserts that there
  exists a pull-back morphism $d\pi : \pi^* \Omega^{[p]}_X \to
  \Omega^p_{\widetilde{X}}$.  As $\pi$ is birational, $d\pi$ is generically
  injective and since $\Omega^{[p]}_X$ is torsion-free, this means that the
  induced morphism on the level of sections is injective:
  $$
  \pi^*: H^0 \bigl( X,\, \Omega^{[p]}_X \bigr) \to H^0 \bigl( \widetilde{X},\,
  \Omega^p_{\widetilde{X}} \bigr) = 0.
  $$
  The claim then follows.
\end{proof}

\subsection{Residue theory and restrictions for differentials on dlt pairs}
\label{ssec:res}

Logarithmic Kähler differentials on snc pairs are canonically defined. They
are characterised by strong universal properties and appear accordingly in a
number of important sequences, filtered complexes and other
constructions. First examples include the following:%

\begin{enumerate}
\item the pull-back property of differentials under arbitrary morphisms,

\item relative differential sequences for smooth morphisms,

\item residue sequences associated with snc pairs, and

\item the description of Chern classes as the extension classes of the first
  residue sequence.
\end{enumerate}

Reflexive differentials do in general not enjoy the same universal properties
as Kähler differentials. However, we have seen in
Theorem~\ref{thm:generalpullback} that reflexive differentials do have good
pull-back properties if we are working with log canonical pairs. In the
present Section~\ref{ssec:res}, we would like to make the point that each of
the other properties listed above also has a very good analogue for reflexive
differentials, as long as we are working with dlt pairs. This makes reflexive
differential extremely useful in practise. In a sense, it seems fair to say
that ``reflexive differentials and dlt pairs are made for one another''.

\subsubsection{The relative differential sequence for snc pairs}
\label{sec:relReflSeqSNC}

Here we recall the generalisation of the standard sequence for relative
differentials, \cite[Prop.~II.8.11]{Ha77}, to the logarithmic setup. For this,
we introduce the notion of an \emph{snc morphism} as a logarithmic analogue of
a smooth morphism.  Although \emph{relatively snc divisors} have long been
used in the literature, cf.~\cite[Sect.~3]{Deligne70}, we are not aware of a
good reference that discusses them in detail. Recall that a pair $(X,D)$ is
called an ``snc pair'' if $X$ is smooth, and if the divisor $D$ is reduced and
has only simple normal crossing support.

\begin{notation}[Intersection of boundary components]\label{not:DI}
  Let $(X,D)$ be a pair of a normal space $X$ and a divisor $D$, where $D$ is
  written as a sum of its irreducible components $D = \alpha_1 D_1 + \ldots +
  \alpha_n D_n$.  If $I \subseteq \{1, \ldots, n\}$ is any non-empty subset,
  we consider the scheme-theoretic intersection $D_I := \cap_{i \in I}
  D_i$. If $I$ is empty, set $D_I := X$.
\end{notation}

\begin{rem}[Description of snc pairs]\label{rem:descrSNC}
  In the setup of Notation~\ref{not:DI}, it is clear that the pair $(X,D)$ is
  snc if and only if all $D_I$ are smooth and of codimension equal to the
  number of defining equations: $\codim_XD_I=|I|$ for all $I$ where $D_I \not
  = \emptyset$.
\end{rem}

\begin{defn}[\protect{Snc morphism, relatively snc divisor, \cite[Def.~2.1]{VZ02}}]\label{def:sncMorphism}
  If $(X,D)$ is an snc pair and $\phi: X \to T$ a surjective morphism to a
  smooth variety, we say that $D$ is \emph{relatively snc}, or that $\phi$ is
  \emph{an snc morphism of the pair} $(X,D)$ if for any set $I$ with $D_I \not
  = \emptyset$ all restricted morphisms $\phi|_{D_I} : D_I \to T$ are smooth
  of relative dimension $\dim X-\dim T -|I|$.
\end{defn}

\begin{rem}[Fibers of an snc morphisms]\label{rem:fiberSNC}
  If $(X,D)$ is an snc pair and $\phi: X \to T$ is any surjective snc morphism
  of $(X,D)$, it is clear from Remark~\ref{rem:descrSNC} that if $t \in T$ is
  any point, with preimages $X_t := \phi^{-1}(t)$ and $D_t := D \cap X_t$ then
  the pair $(X_t, D_t)$ is again snc.
\end{rem}

\begin{rem}[All morphisms are generically snc]\label{rem:genSNC}
  If $(X,D)$ is an snc pair and $\phi: X \to T$ is any surjective morphism, it
  is clear from generic smoothness that there exists a dense open set $T^\circ
  \subseteq T$, such that $D \cap \phi^{-1}(T^\circ)$ is relatively snc over
  $T^\circ$.
\end{rem}

Let $(X,D)$ be a reduced snc pair, and $\phi: X \to T$ an snc morphism of
$(X,D)$, as introduced in Definition~\ref{def:sncMorphism}. In this setting,
the standard pull-back morphism of 1-forms extends to yield the following
exact sequence of locally free sheaves on $X$,
\begin{equation}\label{eq:relDiff}
  0 \to \phi^*\Omega^1_T \to \Omega^1_X(\log D) \to \Omega^1_{X/T}(\log D) \to 0,
\end{equation}
called the ``relative differential sequence for logarithmic differentials''.
We refer to \cite[Sect.~4.1]{EV90} \cite[Sect.~3.3]{Deligne70} or
\cite[p.~137ff]{MR1924513} for a more detailed explanation. For forms of
higher degrees, the sequence \eqref{eq:relDiff} induces filtration
\begin{equation}\label{eq:relDiffFilt}
  \Omega^{p}_{X}(\log D) = \mathcal F^{0}(\log) \supseteq \mathcal F^1(\log) \supseteq \dots
  \supseteq \mathcal F^{p}(\log) \supseteq \{0\}
\end{equation}
with quotients
\begin{equation}\label{eq:relDiffQ}
  0 \to \mathcal F^{r+1}(\log) \to \mathcal F^r(\log) \to \phi^*\Omega_T^r\otimes
  \Omega_{X/T}^{p-r}(\log D) \to 0
\end{equation}
for all $r$. We refer to \cite[Ex.~II.5.16]{Ha77} for the construction
of~\eqref{eq:relDiffFilt}.

The main result of this section,
Theorem~\ref{thm:relativedifferentialfiltration}, gives analogues
of~\eqref{eq:relDiff}--\eqref{eq:relDiffQ} in case where $(X,D)$ is dlt. In
essence, Theorem~\ref{thm:relativedifferentialfiltration} says that all
properties of the relative differential sequence still hold on dlt pairs if
one removes from $X$ a set $Z$ of codimension $\codim_X Z \geq 3$.

\begin{mythm}[\protect{Relative differential sequence on dlt pairs, \cite[Thm.~10.6]{GKKP10}}]\label{thm:relativedifferentialfiltration}
  Let $(X, D)$ be a dlt pair with $X$ connected. Let $\phi: X \to T$ be a
  surjective morphism to a normal variety $T$.  Then, there exists a non-empty
  smooth open subset $T^\circ \subseteq T$ with preimages $X^\circ =
  \phi^{-1}(T^ \circ)$, $D^\circ = D \cap X^\circ $, and a filtration
  \begin{equation}\label{eq:relDiffFilt2}
    \Omega^{[p]}_{X^\circ}(\log \lfloor D^{\circ} \rfloor) = \mathcal F^{[0]}(\log)
    \supseteq \dots \supseteq \mathcal F^{[p]}(\log)\supseteq \{0\}
  \end{equation}
  on $X^\circ$ with the following properties.
  \begin{enumerate}
  \item\label{il:_A} The filtrations \eqref{eq:relDiffFilt} and
    \eqref{eq:relDiffFilt2} agree wherever the pair $(X^\circ, \lfloor D^\circ
    \rfloor)$ is snc, and $\phi$ is an snc morphism of $(X^\circ, \lfloor
    D^\circ \rfloor)$.

  \item\label{il:_B} For any $r$, the sheaf $\mathcal F^{[r]}(\log)$ is reflexive,
    and $\mathcal F^{[r+1]}(\log)$ is a saturated subsheaf of $\mathcal F^{[r]}(\log)$.

  \item\label{seq:quotsF} For any $r$, there exists a sequence of sheaves of
    $\sO_{X^\circ}$-modules,
    $$
    0 \to \mathcal F^{[r+1]}(\log) \to \mathcal F^{[r]}(\log) \to
    \phi^*\Omega_{T^{\circ}}^r \otimes \Omega_{X^\circ/T^\circ}^{[p-r]}(\log
    \lfloor D^\circ \rfloor) \to 0,
    $$
    which is exact and analytically locally split in codimension $2$.

  \item\label{lastisom} There exists an isomorphism $\mathcal F^{[p]}(\log) \simeq
    \phi^*\Omega^p_{T^\circ}$.
  \end{enumerate}
\end{mythm}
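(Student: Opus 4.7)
My plan is to reduce the theorem to the classical snc case on a suitable big open subset of $X^\circ$ and then to extend the resulting sheaves reflexively by push-forward. The substantial input I would rely on is that dlt pairs are snc in codimension~$2$: by the standard structure theorem for dlt singularities, there is a closed subset $Z \subset X$ with $\codim_X Z \geq 3$ such that $(X \setminus Z,\, \lfloor D \rfloor \cap (X \setminus Z))$ is snc. With that in hand, I would first pass to the smooth locus of $T$ and then apply generic smoothness to the restriction of $\phi$ to each of the finitely many strata $D_I$ of $\lfloor D \rfloor$ on $X \setminus Z$; this produces a smooth dense open $T^\circ \subseteq T$ such that, on setting $X^\circ := \phi^{-1}(T^\circ)$ and $U := X^\circ \setminus Z$, the restriction $\phi|_U$ is an snc morphism of the pair $(U,\, \lfloor D^\circ \rfloor \cap U)$ in the sense of Definition~\ref{def:sncMorphism}, and $\codim_{X^\circ}(X^\circ \setminus U) \geq 3$ by construction.

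On $U$ the classical sequence \eqref{eq:relDiff} provides the filtration \eqref{eq:relDiffFilt} by locally free subsheaves $\mathcal F^r_U(\log)$, together with the Zariski-locally split short exact sequences \eqref{eq:relDiffQ}. Writing $\iota \colon U \hookrightarrow X^\circ$ for the inclusion, I would define
\[
  \mathcal F^{[r]}(\log) \;:=\; \iota_* \mathcal F^r_U(\log).
\]
Fact~\ref{fact:extension1}, applied to the big open embedding $\iota$, gives that each $\mathcal F^{[r]}(\log)$ is reflexive and restricts to $\mathcal F^r_U(\log)$ on $U$, yielding (1) and the reflexivity half of (2). Pushing \eqref{eq:relDiffQ} forward through $\iota_*$ and using the projection formula, which is legitimate because $T^\circ$ is smooth and therefore $\phi^*\Omega^r_{T^\circ}$ is locally free on $X^\circ$, I identify $\iota_*\bigl((\phi|_U)^*\Omega^r_{T^\circ}\otimes\Omega^{p-r}_{U/T^\circ}(\log)\bigr)$ with $\phi^*\Omega^r_{T^\circ}\otimes\Omega^{[p-r]}_{X^\circ/T^\circ}(\log\lfloor D^\circ\rfloor)$, producing the desired sequence for (3). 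That sequence is exact and analytically locally split wherever $\phi$ is snc, hence at the generic point of every codimension-$2$ subvariety since $\codim_{X^\circ}(X^\circ \setminus U) \geq 3$. The saturation statement in (2) follows because $\mathcal F^{[r]}(\log)/\mathcal F^{[r+1]}(\log)$ embeds via left-exactness of $\iota_*$ into the reflexive, hence torsion-free, sheaf $\iota_*\bigl((\phi|_U)^*\Omega^r_{T^\circ}\otimes\Omega^{p-r}_{U/T^\circ}(\log)\bigr)$. Finally (4) is immediate: on $U$ the top piece of the filtration equals $(\phi|_U)^*\Omega^p_{T^\circ}$, and its reflexive extension to $X^\circ$ is simply the locally free sheaf $\phi^*\Omega^p_{T^\circ}$.

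The hard part, and really the only nontrivial ingredient, is the codimension-$3$ snc structure theorem for dlt pairs; everything after that is essentially formal manipulation combining Fact~\ref{fact:extension1} with the projection formula on a smooth base. A secondary subtlety worth keeping in mind is that $\iota_*$ is only left exact in general, so the global exactness of the quotient sequence genuinely fails along $X^\circ \setminus U$ through a possibly nontrivial $R^1\iota_*\mathcal F^{r+1}_U(\log)$ contribution; this is precisely the reason the exactness and analytic local splitting in (3) can only be asserted in codimension~$2$ rather than everywhere, and it is why the dlt assumption (giving codimension~$3$) rather than merely lc is needed for the statement to make sense.
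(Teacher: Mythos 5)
Your reduction-and-reflexive-extension scheme is the same skeleton as the paper's argument, and the formal steps (push-forward via Fact~\ref{fact:extension1}, the projection formula over the smooth base $T^\circ$, saturation from torsion-freeness of the reflexive quotient) are fine. But the single substantial input you rely on is false: there is no ``standard structure theorem'' asserting that a dlt pair $(X,\lfloor D\rfloor)$ is snc outside a closed set of $\codim_X Z \geq 3$. Already $D=\emptyset$ and $X$ a klt variety gives counterexamples: log terminal surface singularities are quotient singularities, so taking $X = S \times C$ with $S$ a surface with an $A_1$-point and $\phi$ the projection to the curve $C$, the singular locus of $X$ has codimension exactly $2$ and $(X,\emptyset)$ is nowhere snc along it. What is true, and what you can get cheaply from generic smoothness along the strata $D_I$, is only that the snc locus $U$ on which $\phi$ is an snc morphism has complement of codimension $\geq 2$. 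Your construction therefore delivers the filtration, reflexivity, items (\ref{thm:relativedifferentialfiltration}.\ref{il:_A}), (\ref{thm:relativedifferentialfiltration}.\ref{il:_B}), (\ref{thm:relativedifferentialfiltration}.\ref{lastisom}), and exactness with local splitting \emph{in codimension $1$} --- but not the codimension-$2$ assertion of item (\ref{thm:relativedifferentialfiltration}.\ref{seq:quotsF}), which is precisely the content of the theorem.

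The missing ingredient is the finer codimension-two structure of dlt pairs (\cite[Sect.~9]{GKKP10}): near the general point of any codimension-$2$ subvariety, $(X,\lfloor D\rfloor)$ is \emph{analytically} the quotient of a relatively snc pair by a finite cyclic group. One pulls the sequence back to such a local cover, where it is the split exact snc sequence, and descends by taking invariants (reflexive differentials on the quotient being the invariant differentials upstairs). This is also why the theorem asserts only ``analytically locally split'' rather than Zariski-locally split: the splitting genuinely lives on the covers. Your closing remark misdiagnoses the role of the dlt hypothesis for the same reason --- dlt does not buy snc-ness in codimension $2$, it buys this finite-quotient structure, and the left-exactness of $\iota_*$ is not the obstruction the theorem is designed to circumvent.
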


\begin{rem}[Notation used in Theorem~\ref{thm:relativedifferentialfiltration}]
  If $S$ is any complex variety, we call a sequence of sheaf morphisms,
  \begin{equation}\label{eq:xpl}
    0 \to \sA \to \mathcal B \to \mathcal C \to 0,    
  \end{equation}
  ``exact and analytically locally split in codimension $2$'' if there exists
  a closed subvariety $C \subset S$ of codimension $\codim_S C \geq 3$ and a
  covering of $S \setminus C$ by subsets $(U_i)_{i \in I}$ which are open in
  the analytic topology, such that the restriction of~\eqref{eq:xpl} to $S
  \setminus C$ is exact, and such that the restriction of~\eqref{eq:xpl} to
  any of the open sets $U_i$ splits.  We refer to Footnote~\ref{foot:dlt} on
  Page~\pageref{foot:dlt} for references concerning the notion of a ``dlt
  pair''.
\end{rem}

\begin{proof}[Idea of proof of Theorem~\ref{thm:relativedifferentialfiltration}]
  We give only a very rough and incomplete idea of the proof of
  Theorem~\ref{thm:relativedifferentialfiltration}.  To construct the
  filtration in~\eqref{eq:relDiffFilt2}, one takes the
  filtration~\eqref{eq:relDiffFilt} which exists on the open set $X \setminus
  X_{\rm sing}$ wherever the morphism $\phi$ is snc, and extends the sheaves
  to reflexive sheaves that are defined on all of $X$. It is then not very
  difficult to show that the sequences
  (\ref{thm:relativedifferentialfiltration}.\ref{seq:quotsF}) are exact and
  locally split away from a subset $Z \subset X$ of codimension $\codim_X Z
  \geq 2$. The main point of Theorem~\ref{thm:relativedifferentialfiltration}
  is, however, that it suffices to remove from $X$ a set of codimension
  $\codim_X Z \geq 3$. For this, a careful analysis of the codimension-two
  structure of dlt pairs, cf.~\cite[Sect.~9]{GKKP10}, proves to be key.
\end{proof}

\subsubsection{Residue sequences for reflexive differential forms}

A very important feature of logarithmic differentials is the existence of a
residue map.  In its simplest form consider a smooth hypersurface $D \subset
X$ in a manifold $X$. The residue map is then the cokernel map in the exact
sequence
$$
0 \to \Omega^1_X \to \Omega^1_X(\log D) \to \mathcal O_D \to 0.
$$
More generally, consider a reduced snc pair $(X, D)$. Let $D_0 \subseteq D$ be
any irreducible component and recall from \cite[2.3(b)]{EV92} that there
exists a residue sequence,
$$
\xymatrix{0 \to \Omega^p_X(\log (D - D_0)) \ar[r] &
  \Omega^p_X(\log D) \ar[r]^(.4){\rho^p} &
  \Omega^{p-1}_{D_0}(\log D_0^c) \to 0,
}
$$
where $D_0^c := (D-D_0)|_{D_0}$ denotes the ``restricted complement'' of
$D_0$.  More generally, if $\phi: X \to T$ is an snc morphism of $(X,D)$ we
have a relative residue sequence
\begin{equation}\label{eq:stdResidue}
  \xymatrix{0 \to \Omega^p_{X/T}(\log(D - D_0)) \ar[r] &
    \Omega^p_{X/T}(\log D) \ar[r]^(.4){\rho^p} &
    \Omega^{p-1}_{D_0/T}(\log D_0^c) \to 0.
  }
\end{equation}
The sequence~\eqref{eq:stdResidue} is not a sequence of locally free sheaves
on $X$, and its restriction to $D_0$ will never be exact on the left.
However, an elementary argument, cf.~\cite[Lem.~2.13.2]{KK08}, shows that
restriction of~\eqref{eq:stdResidue} to $D_0$ induces the following exact
sequence
\begin{equation}\label{eq:restrictedsncresidue}
  0 \to \Omega_{D_0/T}^p(\log D_0^c) \xrightarrow{i^p} \Omega^p_{X/T}(\log
  D)|_{D_0} \xrightarrow{\rho^p_D} \Omega_{D_0/T}^{p-1}(\log D_0^c) \to 0,
\end{equation}
which is very useful for inductive purposes.  We recall without proof the
following elementary fact about the residue sequence.

\begin{fact}[Residue map as a test for logarithmic poles]\label{fact:poletest}
  If $\sigma \in H^0\bigl( X,\, \Omega^{p}_{X/T}(\log D) \bigr)$ is any
  reflexive form, then $\sigma \in H^0\bigl( X,\, \Omega^{p}_{X/T}(\log
  (D-D_0)) \bigr)$ if and only if $\rho^{p}(\sigma) = 0$.
\end{fact}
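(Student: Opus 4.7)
The plan is to extract the ``if and only if'' directly from the exactness of the residue sequence \eqref{eq:stdResidue}, which has already been recorded. By construction of the residue, the inclusion $\Omega^p_{X/T}(\log(D-D_0)) \hookrightarrow \Omega^p_{X/T}(\log D)$ is a sheaf-level monomorphism, and $\rho^p$ is precisely its cokernel map; hence \eqref{eq:stdResidue} is short exact as a sequence of coherent $\mathcal{O}_X$-modules. Applying the left-exact functor $H^0(X, -)$ then produces an exact sequence
$$
0 \to H^0\!\bigl(X,\, \Omega^p_{X/T}(\log(D-D_0))\bigr) \to H^0\!\bigl(X,\, \Omega^p_{X/T}(\log D)\bigr) \xrightarrow{\rho^p} H^0\!\bigl(D_0,\, \Omega^{p-1}_{D_0/T}(\log D_0^c)\bigr),
$$
and exactness at the middle term is, verbatim, the statement of the fact (where one identifies a section of the subsheaf with its image in $\Omega^p_{X/T}(\log D)$ via the first arrow).

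The one point that still requires an argument is the sheaf-level exactness of \eqref{eq:stdResidue}, and this will be a local computation on the snc pair. In analytic (or étale) local coordinates $(z_1, \ldots, z_n)$ on $X$ in which $D_0 = \{z_1 = 0\}$, the remaining components of $D$ meeting the chart are coordinate hyperplanes $\{z_i = 0\}$ for $i\geq 2$ in some index set, and $\phi$ is the projection onto a coordinate subspace not involving $z_1$, every germ $\sigma$ of a relative logarithmic $p$-form admits a unique decomposition
$$
\sigma \;=\; \frac{dz_1}{z_1} \wedge \alpha \;+\; \beta,
$$
with $\alpha$ and $\beta$ themselves relative logarithmic along $D - D_0$. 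By the definition of the residue one has $\rho^p(\sigma) = \alpha|_{D_0}$, and the term $\tfrac{dz_1}{z_1}\wedge \alpha$ is regular along $\{z_1 = 0\}$ if and only if $\alpha|_{D_0} = 0$. This gives the kernel description $\ker \rho^p = \Omega^p_{X/T}(\log(D-D_0))$ at the stalk level, hence the exactness that was needed above.

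The hard part, insofar as there is one, is only the careful bookkeeping of the unique decomposition of $\sigma$ and its compatibility with the snc morphism $\phi$; once that is settled, both directions of the ``if and only if'' drop out. In particular, no cohomological input beyond left-exactness of $H^0$ is required, which is consistent with the statement being recorded in the text as an elementary fact.
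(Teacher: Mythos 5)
Your argument is correct, and it is the standard one: the paper itself records Fact~\ref{fact:poletest} explicitly ``without proof,'' so there is nothing to compare against beyond the observation that the claim is exactly exactness at the middle term of the $H^0$-sequence obtained from the short exact residue sequence~\eqref{eq:stdResidue}, which is what you do. Note that the sheaf-level exactness of~\eqref{eq:stdResidue} is already being quoted from \cite[2.3(b)]{EV92}, so your local computation is, strictly speaking, a re-derivation of cited material rather than a missing step. The only point to tighten in that computation is the uniqueness of the decomposition $\sigma = \tfrac{dz_1}{z_1}\wedge\alpha + \beta$: as stated it is not unique (one can shift $dz_1\wedge\tau = \tfrac{dz_1}{z_1}\wedge(z_1\tau)$ between the two terms), so you must normalise $\alpha$ and $\beta$ to lie in the subalgebra spanned by the coframe elements other than $\tfrac{dz_1}{z_1}$ and $dz_1$; with that convention the residue $\alpha|_{D_0}$ is well defined and your kernel description goes through.
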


If the pair $(X, D)$ is not snc, no residue map exists in general.  However,
if $(X, D)$ is dlt, then \cite[Cor.~5.52]{KM98} applies to show that $D_0$ is
normal, and an analogue of the residue map $\rho^p$ exists for sheaves of
reflexive differentials.
\begin{figure}
  \centering
  
  \begin{picture}(8,6)(0,0)
    \put( 0.0, 0.0){\includegraphics[height=6cm]{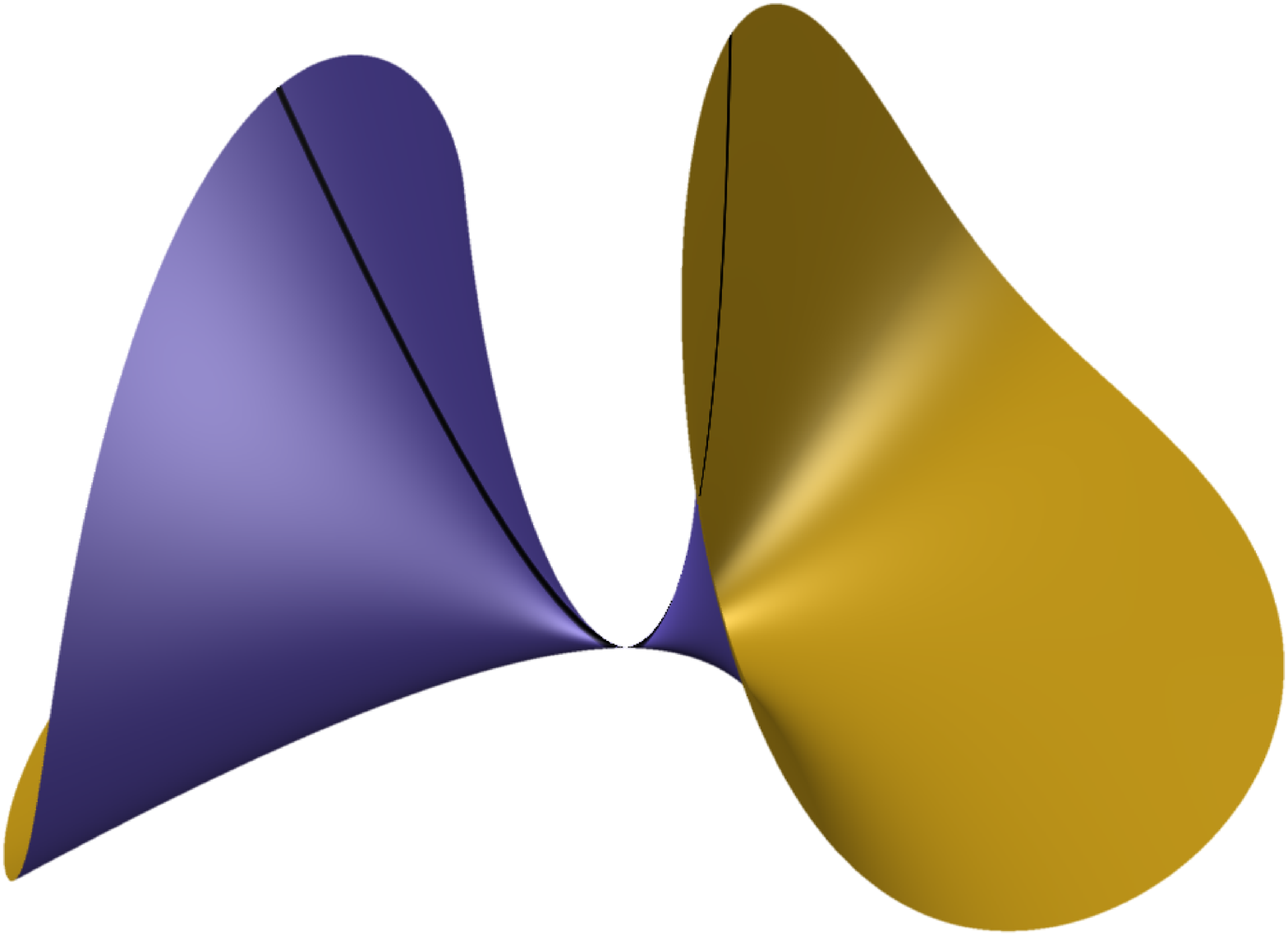}}
    \put(-1.65, 5.05){smooth curve $D$}
    \put( 0.8, 5.0){\vector(2, -1){1.3}}
    \put( 6.5, 4.7){$X$ (singular surface)}
  \end{picture}
  
  \caption{A setup for the residue map on singular spaces.}
  \label{fig:sct}
\end{figure}
To illustrate the problem we are dealing with, consider a normal space $X$
that contains a smooth Weil divisor $D = D_0$, similar to the one sketched in
Figure~\ref{fig:sct}. One can easily construct examples where the singular set
$Z := X_{\rm sing}$ is contained in $D$ and has codimension $2$ in $X$, but
codimension one in $D$. In this setting, a reflexive form $\sigma \in H^0
\bigl( D_0,\, \Omega^{[p]}_{X}(\log D_0) |_{D_0} \bigr)$ is simply the
restriction of a logarithmic form defined outside of $Z$, and the form
$\rho^{[p]}(\sigma)$ is the extension of the well-defined form
$\rho^{p}(\sigma|_{D_0 \setminus Z})$ over $Z$, as a rational form with poles
along $Z \subset D_0$. If the singularities of $X$ are bad, it will generally
happen that the extension $\rho^{[p]}(\sigma)$ has poles of arbitrarily high
order.  Theorem~\ref{thm:relativereflexiveresidue} asserts that this does not
happen when $(X,D)$ is dlt.

\begin{mythm}[\protect{Residue sequences for dlt pairs, \cite[Thm.~11.7]{GKKP10}}]\label{thm:relativereflexiveresidue}
  Let $(X, D)$ be a dlt pair with $\lfloor D \rfloor \neq \emptyset$ and let
  $D_0 \subseteq \lfloor D \rfloor$ be an irreducible component. Let $\phi: X
  \to T$ be a surjective morphism to a normal variety $T$ such that the
  restricted map $\phi|_{D_0}: D_0 \to T$ is still surjective. Then, there
  exists a non-empty open subset $T^\circ \subseteq T$, such that the
  following holds if we denote the preimages as $X^\circ = \phi^{-1}(T^
  \circ)$, $D^\circ = D \cap X^\circ $, and the ``complement'' of $D_0^\circ$
  as $D_0^{\circ,c} := \bigl( \lfloor D^\circ \rfloor - D_0^\circ \bigr)
  |_{D_0^\circ}$.
  \begin{enumerate}
  \item\label{il:RelReflResidue} There exists a sequence
    \begin{multline*}
      \quad \quad \quad \quad 0 \to \Omega^{[r]}_{X^\circ/T^\circ}(\log
      (\lfloor D^\circ\rfloor - D_0^\circ )) \to
      \Omega_{X^\circ/T^\circ}^{[r]}( \log \lfloor D^\circ
      \rfloor ) \\
      \xrightarrow{\rho^{[r]}} \Omega^{[r-1]}_{D^\circ_0/T^\circ}(\log
      D_0^{\circ,c} ) \to 0
    \end{multline*}
    which is exact in $X^\circ$ outside a set of codimension at least $3$. This
    sequence coincides with the usual residue sequence \eqref{eq:stdResidue} wherever
    the pair $(X^\circ, D^\circ)$ is snc and the map $\phi^\circ: X^\circ \to
    T^\circ$ is an snc morphism of $(X^\circ, D^\circ)$.

  \item\label{il:RestrRelReflResidue} The restriction of
    Sequence~(\ref{thm:relativereflexiveresidue}.\ref{il:RelReflResidue}) to
    $D_0$ induces a sequence
    \begin{multline*}
      \quad \quad \quad \quad 0 \to \Omega^{[r]}_{D^\circ_0/T^\circ}(\log
      D_0^{\circ,c} ) \to \Omega_{X^\circ/T^\circ}^{[r]}(\log \lfloor D^\circ
      \rfloor )|_{D_0^\circ} ^{**} \\ %
      \xrightarrow{\rho^{[r]}_{D^\circ_0}}
      \Omega^{[r-1]}_{D^\circ_0/T^\circ}(\log D_0^{\circ,c} ) \to 0
    \end{multline*}
    which is exact on $D_0^\circ$ outside a set of codimension at least $2$ and
    coincides with the usual restricted residue sequence
    \eqref{eq:restrictedsncresidue} wherever the pair $(X^\circ, D^\circ)$ is snc and
    the map $\phi^\circ: X^\circ \to T^\circ$ is an snc morphism of $(X^\circ,
    D^\circ)$. \qed
  \end{enumerate}
\end{mythm}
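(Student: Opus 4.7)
The plan is to construct both sequences by extending the well-understood residue sequences from the relative snc locus to all of $X^\circ$ via reflexive hull, and then to upgrade exactness to the stated codimensions using the codimension-two structure theory for dlt pairs together with the relative differential filtration of Theorem~\ref{thm:relativedifferentialfiltration}.

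First I would apply Remark~\ref{rem:genSNC} to the snc locus of $(X, \lfloor D\rfloor)$ and to each of its strata, and use generic flatness together with generic smoothness of $\phi|_{D_0}$ to select a dense open $T^\circ \subseteq T$ such that, writing $X^\circ := \phi^{-1}(T^\circ)$ and letting $U \subseteq X^\circ$ denote the intersection of the snc locus of $(X^\circ, \lfloor D^\circ \rfloor)$ with the locus where $\phi$ is an snc morphism, the complement $X^\circ \setminus U$ has codimension at least $2$, the divisor $D_0 \cap U$ is smooth over $T^\circ$, and $\phi|_{D_0 \cap U}$ remains surjective. On $U$, the usual relative residue sequence \eqref{eq:stdResidue} is exact by construction.

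For part (1), I would push \eqref{eq:stdResidue} forward along the inclusion $\iota: U \hookrightarrow X^\circ$. Because the three sheaves on $U$ are locally free and $X^\circ \setminus U$ has codimension at least $2$, Fact~\ref{fact:extension1} identifies the pushforwards with the reflexive sheaves appearing in the statement and defines the morphism $\rho^{[r]}$. Exactness outside codimension two is automatic; to upgrade to codimension three, I would analyze the generic points of codimension-two strata of $X^\circ$ not contained in $U$. At such a point $(X^\circ, \lfloor D^\circ \rfloor)$ is dlt but not snc, and the codimension-two classification of dlt pairs that underlies Theorem~\ref{thm:relativedifferentialfiltration} (see \cite[Sect.~9]{GKKP10}) presents the pair analytically locally as a cyclic quotient of an snc pair; pulling the residue sequence back to this snc cover and taking invariants would yield exactness of the extended sequence at these points. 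An alternative would be to combine the exact filtration sequences (\ref{thm:relativedifferentialfiltration}.\ref{seq:quotsF}) for the two pairs $(X^\circ, \lfloor D^\circ \rfloor)$ and $(X^\circ, \lfloor D^\circ \rfloor - D_0^\circ)$ and to compare them term-by-term.

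For part (2), I would invoke \cite[Cor.~5.52]{KM98} to ensure that $D_0$, and hence $D_0^\circ$, is normal, and then restrict the sequence of part (1) to $D_0^\circ$ and take reflexive hulls on $D_0^\circ$. On the snc locus the restriction already coincides with the elementary sequence \eqref{eq:restrictedsncresidue}, whose first term is $\Omega^{r}_{D_0^\circ/T^\circ}(\log D_0^{\circ,c})$. Exactness of the reflexivized restricted sequence outside a codimension-two subset of $D_0^\circ$ would then follow by combining codimension-three exactness of part (1) on $X^\circ$, which meets $D_0^\circ$ in a codimension-two subset, with the fact that pushforward along the inclusion of a big open subset of the normal variety $D_0^\circ$ preserves reflexive sheaves. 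The main obstacle throughout will be the codimension-three exactness in part (1): local exactness at codimension-two non-snc points of a dlt pair cannot be read off from formal properties of the reflexive hull and genuinely requires the explicit analytic-local models provided by the codimension-two classification of dlt singularities. Since that same classification is the technical heart of Theorem~\ref{thm:relativedifferentialfiltration}, I would expect to prove both theorems in parallel.
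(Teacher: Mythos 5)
Your proposal follows essentially the same route the paper indicates for this result (which it does not prove in full, deferring to \cite[Sect.~9--11]{GKKP10}): extend the snc residue sequence from the big open relatively-snc locus by taking reflexive hulls, observe that exactness outside codimension two is formal, and then upgrade to codimension three (resp.\ codimension two on the normal divisor $D_0$) via the analytic-local description of dlt pairs along codimension-two strata as finite cyclic quotients of snc pairs. This matches the paper's stated strategy, and your identification of the codimension-two structure theory as the genuine technical input is exactly the point the paper emphasises.
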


As before, the proof of Theorem~\ref{thm:relativereflexiveresidue} relies on
our knowledge of the codimension-two structure of dlt pairs.
Fact~\ref{fact:poletest} and Theorem~\ref{thm:relativereflexiveresidue}
together immediately imply that the residue map for reflexive differentials
can be used to check if a reflexive form has logarithmic poles along a given
boundary divisor.

\begin{rem}[Residue map as a test for logarithmic poles]\label{rem:poletest}
  In the setting of Theorem~\ref{thm:relativereflexiveresidue}, if $\sigma \in
  H^0\bigl( X,\, \Omega^{[p]}_X(\log \lfloor D \rfloor) \bigr)$ is any
  reflexive form, then $\sigma \in H^0\bigl( X,\, \Omega^{[p]}_X(\log \lfloor
  D \rfloor -D_0) \bigr)$ if and only if $\rho^{[p]}(\sigma) = 0$.
\end{rem}

\subsubsection{The residue map for $1$-forms}
\label{sec:part2-last}

Let $X$ be a smooth variety and $D \subset X$ a smooth, irreducible
divisor. The first residue sequence~\eqref{eq:stdResidue} of the pair $(X,D)$
then reads
$$
0 \to \Omega^1_D \to \Omega^1_X(\log D)|_D \xrightarrow{\rho^1} \sO_D \to 0,
$$
and we obtain a connecting morphism of the long exact cohomology sequence,
$$
\delta : H^0 \bigl( D, \sO_D \bigr) \to H^1 \bigl( D, \Omega^1_D \bigr).
$$
In this setting, the standard description of the first Chern class in terms of
the connecting morphism, \cite[III.~Ex.~7.4]{Ha77}, asserts that
\begin{equation}\label{eq:c1descr}
  c_1\bigl( \sO_X(D)|_D \bigr) =
  \delta(\mathbf{1}_D) \in H^1 \bigl( D, \Omega^1_D \bigr),
\end{equation}
where $\mathbf{1}_D$ is the constant function on $D$ with value
one. Theorem~\ref{thm:Chernclass} generalises Identity~\eqref{eq:c1descr} to
the case where $(X,D)$ is a reduced dlt pair with irreducible boundary
divisor.

\begin{mythm}[\protect{Description of Chern classes, \cite[Thm.~12.2]{GKKP10}}]\label{thm:Chernclass}
  Let $(X, D)$ be a dlt pair, $D = \lfloor D \rfloor$ irreducible.  Then,
  there exists a closed subset $Z \subset X$ with $\codim_{X}Z \geq 3$ and a
  number $m \in \mathbb{N}$ such that $mD$ is Cartier on $X^\circ := X
  \setminus Z$, such that $D^\circ := D \cap X^\circ$ is smooth, and such that
  the restricted residue sequence
  \begin{equation}\label{eq:simplerestrictedreflexiveresidue}
    0 \to \Omega_D^1 \to \Omega_X^{[1]}(\log D)|_D^{**}
    \overset{\rho_D}{\longrightarrow} \sO_{D} \to 0
  \end{equation}
  defined in Theorem~\ref{thm:relativereflexiveresidue} is exact on
  $D^\circ$. Moreover, for the connecting homomorphism $\delta$ in the
  associated long exact cohomology sequence
  $$
  \delta : H^0 \bigl(D^\circ,\, \sO_{D^\circ} \bigr) \to
  H^1\bigl(D^\circ,\, \Omega_{D^\circ}^1\bigr)
  $$
  we have
  \begin{equation}
    \delta(m\cdot \mathbf{1}_{D^\circ} ) = 
    c_1 \left(\left. \sO_{X^\circ}(mD^\circ)\right|_{D^\circ}\right).
  \end{equation}
\end{mythm}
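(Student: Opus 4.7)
The plan is to shrink $X$ appropriately and then carry out a direct \v{C}ech cocycle computation that mimics the classical derivation of \cite[III.~Ex.~7.4]{Ha77}. First I would construct the open set $X^\circ = X \setminus Z$ by arranging three separate conditions outside a codimension-three subset: (i) $mD$ is Cartier for some $m \in \mathbb{N}$; (ii) $D^\circ$ is smooth; (iii) the restricted residue sequence~\eqref{eq:simplerestrictedreflexiveresidue} is exact. Condition (iii) is the content of Theorem~\ref{thm:relativereflexiveresidue} (applied in the trivial case $T=\mathrm{pt}$). Condition (ii) follows from the fact that on a dlt pair the irreducible boundary $D$ is normal by \cite[Cor.~5.52]{KM98}, so $D_{\mathrm{sing}}$ has codimension $\geq 2$ in $D$, equivalently $\geq 3$ in $X$. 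For condition (i) I would invoke the codimension-two classification of dlt singularities along an irreducible boundary component (developed in the preceding structural analysis of dlt pairs in~\cite[Sect.~9]{GKKP10}): at the finitely many codimension-two points of $X \cap D$ where $D$ fails to be Cartier there is a well-defined local Cartier index, and $m$ may be taken to be the least common multiple of these indices.

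Next I would perform an essentially classical \v{C}ech calculation. Cover $X^\circ$ by analytic open sets $\{U_i\}$ on which $mD^\circ$ is trivialised by a local equation $f_i \in \sO_{X^\circ}(U_i)$, so that $g_{ij} := f_i/f_j \in \sO^{*}(U_{ij})$ are the unit transition functions of $\sO_{X^\circ}(mD^\circ)$. The rational $1$-form $df_i/f_i$ is a regular logarithmic differential on the smooth locus of $U_i$; since the complement of that locus has codimension $\geq 2$ and $\Omega^{[1]}_{X^\circ}(\log D^\circ)$ is reflexive, Fact~\ref{fact:extension1} extends it uniquely to a section of $\Omega^{[1]}_{X^\circ}(\log D^\circ)$ on $U_i$, and its restriction to $D^\circ$ defines a $0$-cochain valued in $\Omega^{[1]}_{X^\circ}(\log D^\circ)|_{D^\circ}^{**}$.

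To finish, I would verify two compatibilities. On the snc locus the classical residue of $df_i/f_i|_{D^\circ}$ equals $m$, and by the final compatibility assertion of Theorem~\ref{thm:relativereflexiveresidue} (the reflexive residue $\rho^{[1]}_{D^\circ}$ agrees with the classical one where defined), combined with reflexivity of the target $\sO_{D^\circ}$, this identity propagates to $\rho^{[1]}_{D^\circ}(df_i/f_i|_{D^\circ}) = m \cdot \mathbf{1}_{D^\circ}$ on all of $U_i \cap D^\circ$. Hence $\{df_i/f_i|_{D^\circ}\}$ is a \v{C}ech lift of $m \cdot \mathbf{1}_{D^\circ}$, its coboundary
$$
\left\{\tfrac{df_i}{f_i} - \tfrac{df_j}{f_j}\right\}\bigg|_{D^\circ} \;=\; \left\{\tfrac{dg_{ij}}{g_{ij}}\right\}\bigg|_{D^\circ}
$$
lies in the kernel $\Omega^1_{D^\circ}$ of $\rho^{[1]}_{D^\circ}$ (because the $g_{ij}$ are units), and by definition represents $\delta(m\cdot \mathbf{1}_{D^\circ})$. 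Since $D^\circ$ is smooth and the $g_{ij}|_{D^\circ}$ are transition functions of $\sO_{X^\circ}(mD^\circ)|_{D^\circ}$, this same cocycle $\{d\log g_{ij}|_{D^\circ}\}$ represents $c_1\bigl(\sO_{X^\circ}(mD^\circ)|_{D^\circ}\bigr)$ by the standard Atiyah-class description of Chern classes of line bundles on smooth varieties. The two classes therefore agree.

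The principal obstacle is the preliminary construction of $Z$, specifically condition~(i): isolating a single integer $m$ that makes $mD$ Cartier outside codimension three. The \v{C}ech step itself is conceptually textbook, but it relies crucially on the fact that Theorem~\ref{thm:relativereflexiveresidue} supplies \emph{exactness} of the restricted residue sequence on all of $D^\circ$ rather than merely on a dense open subset of it; without this global exactness one could not read off $\{dg_{ij}/g_{ij}|_{D^\circ}\}$ as a cocycle with values in $\Omega^1_{D^\circ}$, and the identification with $\delta(m \cdot \mathbf{1}_{D^\circ})$ in $H^1(D^\circ,\Omega^1_{D^\circ})$ would collapse.
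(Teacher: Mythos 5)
Your proposal is correct and follows essentially the same route as the proof of \cite[Thm.~12.2]{GKKP10}, which this survey quotes without reproducing. Both arguments first remove a codimension-three set using the codimension-two structure theory of dlt pairs together with Theorem~\ref{thm:relativereflexiveresidue}, and then run the classical \v{C}ech computation with the local forms $df_i/f_i$, whose reflexive residues equal $m\cdot\mathbf{1}_{D^\circ}$ and whose coboundary $\bigl\{d\log g_{ij}|_{D^\circ}\bigr\}$ simultaneously represents $\delta(m\cdot\mathbf{1}_{D^\circ})$ and $c_1\bigl(\sO_{X^\circ}(mD^\circ)|_{D^\circ}\bigr)$.
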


\subsection{Existence of a pull-back morphism, idea of proof}
\label{ssec:IOPpb}

The proof of Theorem~\ref{thm:generalpullback} is rather involved. To
illustrate the idea of the proof, we concentrate on a very special case, and
give only indications what needs to be done to handle the general setup.

\subsubsection{Simplifying assumptions and setup of notation}

The following simplifying assumptions will be maintained throughout the
present Section~\ref{ssec:IOPpb}.

\begin{sass}\label{sass:steenbrink}
  The space $X$ has dimension $n := \dim X \geq 3$. It is klt, has only one
  single isolated singularity $x \in X$, and the divisor $D$ is empty. The
  morphism $\gamma: Z \to X$ is a resolution of singularities, whose
  exceptional set $E \subset Z$ is a divisor with simple normal crossing
  support.
\end{sass}

To prove Theorem~\ref{thm:generalpullback}, we need to show in essence that
reflexive differential forms $\sigma \in H^0 \bigl( X,\, \Omega^{[p]}_X
\bigr)$ pull back to give differential forms $\widetilde{\sigma} \in H^0
\bigl( Z,\, \Omega^p_Z \bigr)$. The following observation, an immediate
consequence of Fact~\ref{fact:extension1}, turns out to be key.

\begin{obs}\label{obs:xxL}
  To give a reflexive differential $\sigma \in H^0 \bigl( X,\, \Omega^{[p]}_X
  \bigr)$, it is equivalent to give a differential form $\sigma^\circ \in H^0
  \bigl( X \setminus X_{\rm sing},\, \Omega^p_X \bigr)$, defined on the smooth
  locus of $X$. Since the resolution map identifies the open subvarieties $Z
  \setminus E$ and $X \setminus X_{\rm sing}$, we see that to give a reflexive
  differential $\sigma \in H^0 \bigl( X,\, \Omega^{[p]}_X \bigr)$, it is in
  fact equivalent to give a differential form $\widetilde{\sigma}^\circ \in
  H^0 \bigl( Z \setminus E,\, \Omega^p_Z \bigr)$.
\end{obs}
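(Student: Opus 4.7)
The plan is to deduce the observation directly from Fact~\ref{fact:extension1} (extension over small sets) together with two elementary identifications: that $\Omega^{[p]}_X$ agrees with $\Omega^p_X$ on the smooth locus, and that the resolution map $\gamma$ restricts to an isomorphism between $Z \setminus E$ and $X \setminus X_{\rm sing}$.

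For the first equivalence, I would apply Fact~\ref{fact:extension1} with $\mathcal A := \Omega^{[p]}_X$ and with the small set $X_{\rm sing} \subset X$. The sheaf $\Omega^{[p]}_X = \bigl(\Omega^p_X\bigr)^{**}$ is reflexive by construction, and since $X$ is normal, the singular locus has codimension at least two. Thus the restriction map
$$
H^0\bigl(X,\, \Omega^{[p]}_X\bigr) \longrightarrow H^0\bigl(X \setminus X_{\rm sing},\, \Omega^{[p]}_X\bigr)
$$
is an isomorphism. On the smooth locus $X \setminus X_{\rm sing}$, the sheaf $\Omega^p_X$ of Kähler differentials is locally free and hence already reflexive, so $\Omega^{[p]}_X|_{X \setminus X_{\rm sing}} = \Omega^p_X|_{X \setminus X_{\rm sing}}$. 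Substituting into the right-hand side yields the stated bijection $\sigma \leftrightarrow \sigma^\circ$.

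For the second equivalence, I would invoke the defining property of a resolution of singularities: $\gamma : Z \to X$ is a proper birational morphism that is an isomorphism over $X \setminus X_{\rm sing}$. Under the Simplifying Assumptions~\ref{sass:steenbrink}, the exceptional set $E$ is precisely $\gamma^{-1}(x)$, and $\gamma$ restricts to an isomorphism $\gamma|_{Z \setminus E} : Z \setminus E \xrightarrow{\ \cong\ } X \setminus X_{\rm sing}$. Pulling back along this isomorphism identifies
$$
H^0\bigl(X \setminus X_{\rm sing},\, \Omega^p_X\bigr) \cong H^0\bigl(Z \setminus E,\, \Omega^p_Z\bigr),
$$
and composing with the bijection from the first step gives the final conclusion $\sigma \leftrightarrow \widetilde{\sigma}^\circ$.

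There is no real obstacle here: the observation is, by design, just a reformulation of the Hartogs-type extension property of reflexive sheaves combined with the birational nature of $\gamma$. The only point requiring a line of justification is the equality $\Omega^{[p]}_X|_{X \setminus X_{\rm sing}} = \Omega^p_X|_{X \setminus X_{\rm sing}}$, which reduces to the fact that double-dualizing a locally free sheaf does nothing. The real work --- producing an \emph{extension} $\widetilde{\sigma} \in H^0\bigl(Z,\, \Omega^p_Z\bigr)$ of $\widetilde{\sigma}^\circ$ across the exceptional divisor $E$ --- is not addressed by this observation; the observation merely sets up the equivalence that turns the pull-back problem of Theorem~\ref{thm:generalpullback} into an extension problem across $E$, which is where the deeper Hodge-theoretic input will be required later.
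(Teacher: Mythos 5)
Your argument is correct and is exactly the one the paper intends: the Observation is stated there as an immediate consequence of Fact~\ref{fact:extension1}, applied to the reflexive sheaf $\Omega^{[p]}_X$ and the small set $X_{\rm sing}$, combined with the identification of $\Omega^{[p]}_X$ with $\Omega^p_X$ on the smooth locus and of $Z \setminus E$ with $X \setminus X_{\rm sing}$ via $\gamma$. Your closing remark, that the genuine content lies in the subsequent extension of $\widetilde{\sigma}^\circ$ across $E$, is also the correct reading of the role this Observation plays.
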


In essence, Observation~\ref{obs:xxL} says that to show
Theorem~\ref{thm:generalpullback}, we need to prove that the natural
restriction map
\begin{equation}\label{eq:restr1} 
  H^0 \bigl( Z,\, \Omega^p_Z \bigr) \to H^0 \bigl( Z \setminus E,\, 
  \Omega^p_Z \bigr)
\end{equation}
is in fact surjective. In other words, we need to show that any differential
form on $Z$, which is defined outside of the $\gamma$-exceptional set $E$,
automatically extends across $E$, to give a differential form defined on all
of $Z$. This is done in two steps. We first show that the restriction map
\begin{equation}\label{eq:restr2}
  H^0 \bigl( Z,\, \Omega^p_Z(\log E) \bigr)
  \to H^0 \bigl( Z \setminus E,\, \Omega^p_Z(\log E) \bigr) = H^0 \bigl( Z
  \setminus E,\, \Omega^p_Z \bigr)
\end{equation}
is surjective. In other words, we show that any differential form on $Z$,
defined outside of $E$, extends as a form with logarithmic poles along
$E$. Secondly, we show that the natural inclusion map
\begin{equation}\label{eq:restr3} 
  H^0 \bigl( Z,\, \Omega^p_Z \bigr) \to H^0
  \bigl( Z,\, \Omega^p_Z(\log E) \bigr)
\end{equation}
is likewise surjective. In other words, we show that globally defined
differentials forms on $Z$, which are allowed to have logarithmic poles along
$E$, really do not have any poles. Surjectivity of the morphisms
\eqref{eq:restr2} and \eqref{eq:restr3} together will then imply surjectivity
of \eqref{eq:restr1}, finishing the proof of
Theorem~\ref{thm:generalpullback}.

The arguments used to prove surjectivity of \eqref{eq:restr2} and
\eqref{eq:restr3}, respectively, are of rather different nature. We will
sketch the arguments in Sections~\ref{sec:ext2} and \ref{sec:ext3} below.

\subsubsection{Surjectivity of the restriction map \eqref{eq:restr2}}
\label{sec:ext2}

Under the Simplifying Assumptions~\ref{sass:steenbrink}, surjectivity of the
map \eqref{eq:restr2} has essentially been shown by Steenbrink and van
Straten, \cite{SS85}. We give a brief synopsis of their line of argumentation
and indicate additional steps of argumentation required to handle the general
setting. To start, recall from \cite[III~ex.~2.3e]{Ha77} that the map
\eqref{eq:restr2} is part of the standard sequence that defines cohomology
with supports,
\begin{multline}\label{eq:fcws}
  \cdots \to H^0\bigl(Z,\, \Omega^p_Z(\log E) \bigr) \to H^0\bigl(Z \setminus
  E,\, \Omega^p_Z(\log E) \bigr) \to\\ \to H^1_E\bigl( Z,\, \Omega^p_Z(\log
  E)\bigr) \to \cdots
\end{multline}
We aim to show that the last term in~\eqref{eq:fcws} vanishes. There are two
main ingredients to the proof: formal duality and Steenbrink's vanishing
theorem.

\begin{mythm}[\protect{Formal duality theorem for cohomology with support, \cite[Chapt.~3, Thm.~3.3]{Hartshorne1970}}]\label{thm:duality}
  Under the Assumptions~\ref{sass:steenbrink}, if $\mathcal F$ is any locally
  free sheaf on $Z$ and $0 \leq j\leq \dim Z$ any number, then there
  exists an isomorphism
  $$
  \left((R^j\gamma_* \mathcal F)_x \right)^{\widehat\ } \cong H^{n-j}_E
  \bigl(Z,\, \mathcal F^*\otimes \omega_Z \bigr)^*,
  $$
  where $\,\widehat{~}\,$ denotes completion with respect to the maximal ideal
  $\mathfrak{m}_x$ of the point $x \in X$, and where $ n = \dim X = \dim
  Z$. \qed
\end{mythm}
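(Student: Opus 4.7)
The plan is to combine the Theorem on Formal Functions with Serre duality on infinitesimal thickenings of $E$, and then to reinterpret the result via local cohomology.

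First I would apply the Theorem on Formal Functions \cite[III.11.1]{Ha77} to the proper morphism $\gamma : Z \to X$. Since the fiber $\gamma^{-1}(x)$ coincides (set-theoretically) with the exceptional divisor $E$, this yields an isomorphism
$$
\bigl((R^j\gamma_*\mathcal F)_x\bigr)^{\widehat{\ }} \;\cong\; \varprojlim_m H^j\bigl(E_m,\, \mathcal F \otimes \mathcal O_{E_m}\bigr),
$$
where $E_m$ denotes the $m$-th infinitesimal thickening of $E$ in $Z$, defined by $\mathfrak m_x^{m}\cdot\mathcal O_Z$. In other words, the completed stalk on the left is identified with the formal cohomology $H^j(\widehat Z,\, \widehat{\mathcal F})$ of the formal completion of $Z$ along $E$.

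Second, I would bring duality into play. For each fixed $m$, the scheme $E_m$ is proper over $\mathbb C$, so Serre duality gives a perfect pairing between $H^j(E_m, \mathcal F \otimes \mathcal O_{E_m})$ and the Ext-group $\mathrm{Ext}^{n-j}_{\mathcal O_Z}\bigl(\mathcal F \otimes \mathcal O_{E_m},\, \omega_Z\bigr)$. Since $\mathcal F$ is locally free, this Ext-group rewrites as
$$
\mathrm{Ext}^{n-j}_{\mathcal O_Z}\bigl(\mathcal O_{E_m},\, \mathcal F^* \otimes \omega_Z\bigr).
$$
Passing to the direct limit over $m$ and using the standard description of cohomology with supports as a direct limit of Ext-groups against structure sheaves of thickenings,
$$
H^{n-j}_E\bigl(Z,\, \mathcal F^* \otimes \omega_Z\bigr) \;\cong\; \varinjlim_m \mathrm{Ext}^{n-j}_{\mathcal O_Z}\bigl(\mathcal O_{E_m},\, \mathcal F^*\otimes\omega_Z\bigr),
$$
the collection of Serre pairings assembles into a single pairing between the inverse limit on the one side and $H^{n-j}_E(Z,\mathcal F^*\otimes\omega_Z)$ on the other.

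The hard part will be to verify that this assembled pairing is non-degenerate, i.e., really induces the claimed isomorphism, and that the two limit processes are compatible. Concretely, one needs a Mittag-Leffler check to interchange $\varprojlim$ with cohomology on the formal side, and one must identify the continuous linear dual of the formal cohomology---equipped with its natural $\mathfrak m_x$-adic topology coming from the filtration by the $E_m$---with the discrete $\mathbb C$-vector space $H^{n-j}_E(Z,\mathcal F^*\otimes\omega_Z)$. This last identification is the Matlis duality statement over the complete local ring $\widehat{\mathcal O_{X,x}}$ and is the crux of Hartshorne's formal duality machinery in \cite[Chapt.~3]{Hartshorne1970}; granting it, the theorem follows from the two steps above.
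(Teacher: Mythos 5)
The paper does not prove this theorem: it is imported verbatim from Hartshorne's lecture notes \cite[Chapt.~3, Thm.~3.3]{Hartshorne1970}, and the text points to \cite[Appendix~A]{GKK08} for a self-contained proof. Your sketch follows exactly the argument of those references --- theorem on formal functions for $\gamma$ over the complete local ring at $x$, Grothendieck--Serre duality on the proper (non-reduced) thickenings $E_m$ realised as $\mathrm{Ext}^{n-j}_{\mathcal O_Z}(\mathcal O_{E_m}, \mathcal F^*\otimes\omega_Z)$, the description $H^{n-j}_E(Z,\cdot)=\varinjlim_m \mathrm{Ext}^{n-j}_{\mathcal O_Z}(\mathcal O_{E_m},\cdot)$, and cofinality of $\{\mathfrak m_x^m\mathcal O_Z\}$ with the powers of the ideal sheaf of $E$ --- so it is the right route. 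One simplification: for the isomorphism in the direction actually stated, $\bigl((R^j\gamma_*\mathcal F)_x\bigr)^{\widehat{\ }}\cong H^{n-j}_E(Z,\mathcal F^*\otimes\omega_Z)^*$, you do not need Matlis duality or a topological dual at all; since each $\mathrm{Ext}^{n-j}_{\mathcal O_Z}(\mathcal O_{E_m},\mathcal F^*\otimes\omega_Z)$ is a finite-dimensional vector space, dualising the direct limit gives the inverse limit of the duals for free, and the only genuine check is that the Serre-duality isomorphisms are compatible with the transition maps of the two systems. The Matlis/continuous-duality machinery is only needed if one wants to recover $H^{n-j}_E$ as a dual of the completed stalk.
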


A brief introduction to formal duality, together with a readable,
self-contained proof of Theorem~\ref{thm:duality} is found in
\cite[Appendix~A]{GKK08} while Hartshorne's lecture notes
\cite{Hartshorne1970} are the standard reference for these matters.

\begin{mythm}[\protect{Steenbrink vanishing, \cite[Thm.~2.b]{Steenbrink85}}]
  If $p$, $q$ are any two numbers with $p+q > \dim Z$, then $R^q \gamma_*
  \bigl(\mathcal J_E \otimes \Omega^p_Z(\log E) \bigr) = 0$. \qed
\end{mythm}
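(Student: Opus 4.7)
My plan is to reduce the local vanishing of $R^q\gamma_*(\mathcal J_E\otimes\Omega^p_Z(\log E))$ to a global logarithmic Kodaira-Akizuki-Nakano statement on a smooth projective compactification, and then invoke Serre duality in the range $p+q>n$.

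First I would compactify: choose a projective closure $\bar X\supset X$ and a smooth projective model $\bar Z\supset Z$ with an extended projective resolution $\bar\gamma:\bar Z\to\bar X$, arranged so that $\bar E:=\bar Z\setminus Z$ is a divisor and $D:=E+\bar E$ is snc on $\bar Z$. Set $\mathcal F:=\Omega^p_{\bar Z}(\log E)(-E)$; its restriction to $Z$ is $\mathcal J_E\otimes\Omega^p_Z(\log E)$. Because the isolated-singularity assumption forces $\gamma$ to be an isomorphism over $X\setminus\{x\}$, the sheaf $R^q\gamma_*(\mathcal F|_Z)$ is coherent and concentrated at the point $x$; a Leray plus Serre-vanishing argument on $\bar X$, twisting with an ample $L$ on $\bar X$ and taking $m\gg 0$, reduces its vanishing to $H^q(\bar Z,\mathcal F\otimes\bar\gamma^*L^m)=0$.

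Next, using that $E$ is snc in $\bar Z$, the wedge pairing $\Omega^p_{\bar Z}(\log E)\times\Omega^{n-p}_{\bar Z}(\log E)\to\omega_{\bar Z}(E)$ is perfect, which gives $\mathcal F^\vee\otimes\omega_{\bar Z}\cong\Omega^{n-p}_{\bar Z}(\log E)$. Serre duality on $\bar Z$ then yields, for any line bundle $M$ on $\bar Z$,
$$H^q(\bar Z,\,\mathcal F\otimes M)^\vee\;\cong\;H^{n-q}(\bar Z,\,\Omega^{n-p}_{\bar Z}(\log E)\otimes M^{-1}).$$
The Esnault-Viehweg logarithmic Kodaira-Akizuki-Nakano vanishing \cite[Cor.~6.4]{EV92}, applied to the snc divisor $E$ on $\bar Z$, forces the right-hand side to vanish whenever $M$ is ample and $(n-p)+(n-q)<n$, which is exactly the range $p+q>n$ asked for.

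The main technical obstacle is the ampleness of the twist: Akizuki-Nakano requires $M$ ample on $\bar Z$, whereas $\bar\gamma^*L^m$ is only semi-ample since $\bar\gamma$ is birational. I would bridge this by passing to $M:=L_0\otimes\bar\gamma^*L^m$ with $L_0$ a fixed ample line bundle on $\bar Z$ (so $M$ is genuinely ample) and then removing the auxiliary $L_0$ via a Kollár-type relative vanishing theorem (along the lines of \cite{Kol86} or \cite[Sect.~6]{EV92}) that accepts $\bar\gamma$-semi-ample twists directly; alternatively one can organise the argument so that $L_0$ may be taken trivial on a chosen Stein neighbourhood of $E$ in $\bar Z$, which suffices since $R^q\gamma_*\mathcal F$ is supported solely at the point $x$. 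Modulo this bookkeeping, the proof is the combination of compactification, the log-Serre self-duality of $\Omega^p(\log E)(-E)$ against $\Omega^{n-p}(\log E)$, and the Esnault-Viehweg logarithmic Kodaira-Akizuki-Nakano vanishing.
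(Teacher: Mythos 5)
Your reduction to a global statement is fine: the compactification, the Leray/Serre-vanishing argument showing that $R^q\gamma_*\bigl(\mathcal J_E\otimes\Omega^p_Z(\log E)\bigr)=0$ follows from $H^q\bigl(\bar Z,\,\Omega^p_{\bar Z}(\log E)(-E)\otimes\bar\gamma^*L^m\bigr)=0$ for $m\gg 0$, the perfect pairing giving $\mathcal F^\vee\otimes\omega_{\bar Z}\cong\Omega^{n-p}_{\bar Z}(\log E)$, and the Serre duality step are all correct. The problem is that the step you yourself flag as ``the main technical obstacle'' is not bookkeeping --- it is the entire content of the theorem, and neither of your two proposed bridges can close it. First, an ample $L_0$ on $\bar Z$ can never be trivial on a neighbourhood of $E$: under the standing assumptions $E=\gamma^{-1}(x)$ is a compact divisor of dimension $n-1\geq 2$, and an ample line bundle restricts to an ample, hence non-trivial, line bundle on it; so the second alternative is vacuous. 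Second, once you insert the genuinely ample twist $L_0$, the projection formula no longer applies, you only learn that $R^q\bar\gamma_*(\mathcal F\otimes L_0)=0$, and there is no ``Kollár-type'' relative vanishing theorem that removes $L_0$: the theorems of \cite{Kol86} (and their descendants) concern $\omega_X$ and the extreme pieces of the Hodge filtration only, not the intermediate sheaves $\Omega^p(\log E)(-E)$ with $0<p<n$.

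More fundamentally, the statement you would need --- Nakano-type vanishing $H^{n-q}\bigl(\bar Z,\,\Omega^{n-p}_{\bar Z}(\log E)\otimes M^{-1}\bigr)=0$ for $M=\bar\gamma^*L^m$ merely nef and big --- is exactly where Akizuki--Nakano breaks down: Ramanujam's example (the blow-up of $\mathbb P^3$ at a point with $M$ the pull-back of $\mathcal O(1)$) shows the vanishing fails for semi-ample and big twists without the log poles, and the assertion that adding log poles along the exceptional locus repairs it is, via the very duality you set up, \emph{equivalent} to Steenbrink's theorem rather than a tool for proving it; it is not contained in \cite[Cor.~6.4]{EV92}, which requires genuine ampleness. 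This is precisely why the actual proof in \cite{Steenbrink85} proceeds through the local Hodge theory of the isolated singularity (degeneration of the Hodge--de Rham spectral sequence for the punctured neighbourhood), as the remark following the statement in the text indicates, and why the generalisation in \cite[Thm.~14.1]{GKKP10} needs the Du~Bois property as input. Note that your argument does go through for $p=n$, where the dual statement is Kawamata--Viehweg for a nef and big bundle and the theorem reduces to Grauert--Riemenschneider \cite{GR70}; but for $0<p<n$ the approach as written is circular.
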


\begin{rem}
  Steenbrink's vanishing theorem is proven using local Hodge theory of
  isolated singularities. For $p = n$, the sheaves $\Omega^n_Z$ and $\mathcal
  J_E \otimes \Omega^n_Z(\log E)$ are isomorphic. In this case, the Steenbrink
  vanishing theorem reduces to Grauert-Riemenschneider vanishing, \cite{GR70}.
\end{rem}

Setting $\mathcal F := \mathcal J_E \otimes \Omega^{n-p}_Z(\log E)$ and using
that $\mathcal F^* \otimes \omega_Z \cong \Omega^p_Z(\log E)$, formal duality
and Steenbrink vanishing together show that $H^1_E \bigl( Z,\, \Omega^p_Z(\log
E )\bigr) = 0$, for $p < \dim Z-1$, proving surjectivity of \eqref{eq:restr2}
for these values of $p$. The other cases need to be treated separately.
\begin{description}
\item[case $\pmb{p=n}$] After passing to an index-one cover, surjectivity of
  \eqref{eq:restr2} in case $p=n$ follows almost directly from the definition
  of klt, cf.~\cite[Sect.~5]{GKK08}.
\item[case $\pmb{p=n-1}$] In this case one uses the duality between
  $\Omega^{n-1}_Z$ and the tangent sheaf $T_Z$, and the fact that any section
  in the tangent sheaf of $X$ always lifts to the canonical resolution of
  singularities, cf.~\cite[Sect.~6]{GKK08}.
\end{description}

\subsubsection*{General case}

The argument outlined above, using formal duality and Steenbrink vanishing,
works only because we were assuming that the singularities of $X$ are
isolated. In the general case, where the Simplifying
Assumptions~\ref{sass:steenbrink} do not necessarily hold, this is not
necessarily the case. In order to deal with non-isolated singularities, one
applies a somewhat involved cutting-down procedure, as indicated in
Figure~\ref{fig:TSWAH}.
\begin{figure}
  \centering

  \ \\

  $$
  \xymatrix{
    \begin{picture}(4,4)(0,0)
      \put( 0.0, 4.2){$Z$, resolution of singularities}
      \put( 0.0, 0.2){\includegraphics[height=3.5cm]{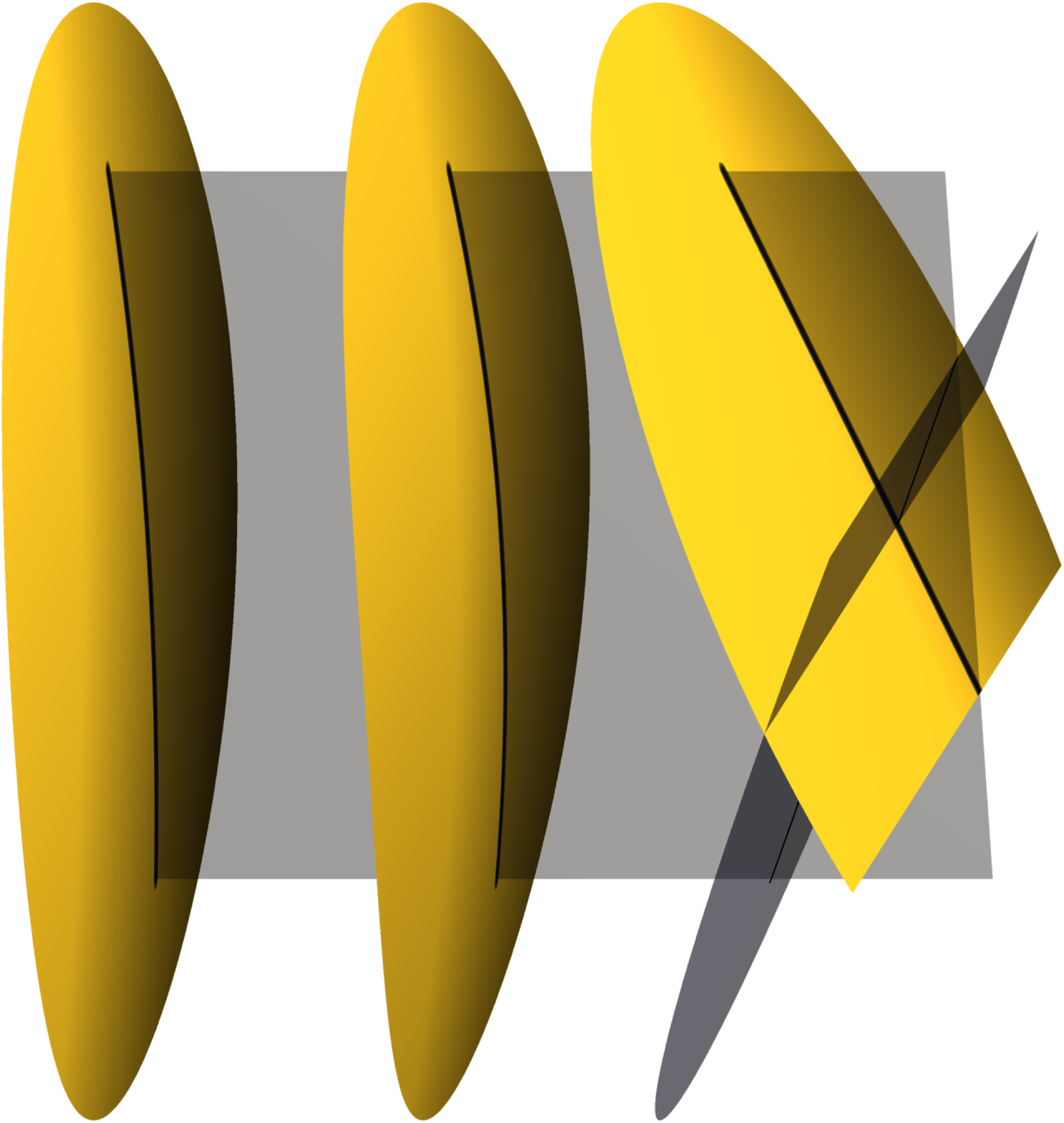}}
      \put( 3.4, 3.7){\scriptsize divisor $E_1$}
      \put( 3.4, 3.6){\vector(-1, -1){0.4}}
      \put( 2.9, 0.3){\scriptsize divisor $E_0$}
      \put( 2.8, 0.4){\vector(-2, 1){0.4}}
    \end{picture}
    \ar[rr]^{\gamma}_{\text{resolution map}} &&
    \begin{picture}(4,4)(0,0)
      \put( 0.0, 4.2){singular space $X$}
      \put( 0.0, 0.2){\includegraphics[height=3.5cm]{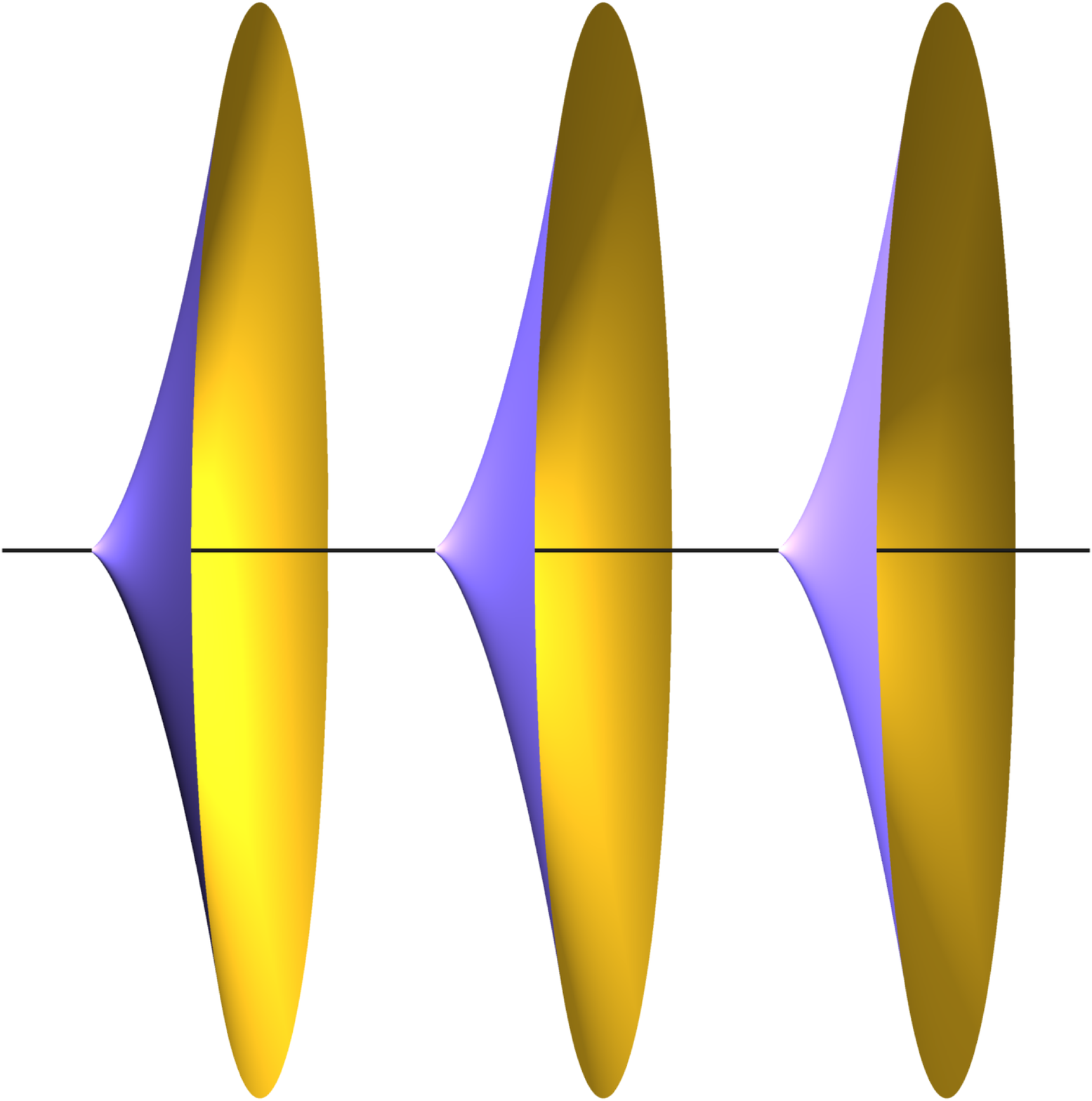}}
      \put( 3.9, 2.6){\scriptsize point $\gamma(E_0)$}
      \put( 3.8, 2.6){\vector(-2, -1){1.2}}
      \put( 2.4, 1.87){\scriptsize $\bullet$}
      \put( 3.8, 1.3){\scriptsize curve $\gamma(E_1)$}
      \put( 3.8, 1.5){\vector(-1, 1){0.4}}
    \end{picture}
  }
  $$

  {\small The figure sketches a situation where $X$ is a threefold whose
    singular locus is a curve. Near the general point of the singular locus,
    the variety $X$ looks like a family of isolated surfaces
    singularities. The exceptional set $E$ of the resolution map $\gamma$
    contains two irreducible divisors $E_0$ and $E_1$.}

  \caption{Non-isolated singularities}
  \label{fig:TSWAH}
\end{figure}
This way, it is often possible to view non-isolated log canonical
singularities a family of isolated singularities, where surjectivity of
\eqref{eq:restr2} can be shown on each member of the family. To conclude that
it holds on all of $Z$, the following strengthening of Steenbrink vanishing is
required.

\begin{mythm}[\protect{Steenbrink-type vanishing for log canonical pairs, \cite[Thm.~14.1]{GKKP10}}]\label{thm:Omegavanishing}
  Let $(X, D)$ be a log canonical pair of dimension $n \geq 2$.  If $\gamma :
  Z \to X$ is a log resolution of singularities with exceptional set $E$ and
  $$
  \Delta := \supp \bigl(E + \gamma^{-1} \lfloor D\rfloor \bigr),
  $$
  then $R^{n-1}\gamma_*\bigl(\Omega^{p}_Z(\log \Delta) \otimes \mathcal O_Z
  (-\Delta)\bigr) = 0$ for all $0\leq p \leq n$. \qed
\end{mythm}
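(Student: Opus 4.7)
My strategy combines Grothendieck--Serre duality with the Du~Bois property of log canonical pairs, reducing the stated $R^{n-1}$-vanishing to an $R^1$-vanishing that follows from Hodge-theoretic considerations.

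\textbf{Reduction via duality.} The wedge pairing on logarithmic differentials on the smooth pair $(Z,\Delta)$ yields a perfect duality
$$
\Omega^p_Z(\log \Delta) \otimes_{\mathcal O_Z} \Omega^{n-p}_Z(\log \Delta) \longrightarrow \Omega^n_Z(\log \Delta) \simeq \omega_Z(\Delta),
$$
equivalently an isomorphism of locally free sheaves
$$
\bigl(\Omega^p_Z(\log \Delta) \otimes \mathcal O_Z(-\Delta)\bigr)^\vee \otimes \omega_Z \;\simeq\; \Omega^{n-p}_Z(\log \Delta).
$$
Applying Grothendieck duality for the proper morphism $\gamma: Z \to X$ (with $Z$ smooth of pure dimension $n$), the desired vanishing of $R^{n-1}\gamma_*\bigl(\Omega^p_Z(\log\Delta)\otimes \mathcal O_Z(-\Delta)\bigr)$ is equivalent, locally on $X$, to the vanishing of $R^1\gamma_*\Omega^{n-p}_Z(\log \Delta)$. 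Hence it suffices to prove
$$
R^1\gamma_* \Omega^{q}_Z(\log \Delta) = 0 \quad \text{for all } 0 \leq q \leq n.
$$

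\textbf{Du~Bois input and boundary cases.} For this $R^1$-vanishing I would invoke the Du~Bois property of the pair $(X, \lfloor D \rfloor)$, which holds because log canonical pairs are Du~Bois by the work of Kovács and Kollár--Kovács. Concretely, for a log resolution as in the statement the Deligne--Du~Bois filtered complex induces a natural map $\underline{\Omega}^q_{(X,\lfloor D\rfloor)} \to R\gamma_* \Omega^q_Z(\log \Delta)$, and Du~Bois-ness of the pair forces $\mathcal H^i$ of the target to vanish for $i \geq 1$ in the relevant range. The extreme cases already illustrate the plan: for $p=n$ we have $\Omega^n_Z(\log\Delta)\otimes \mathcal O_Z(-\Delta)\simeq \omega_Z$, so the result reduces to Grauert--Riemenschneider vanishing; for $p=0$ the claim $R^{n-1}\gamma_*\mathcal O_Z(-\Delta)=0$ is essentially the Du~Bois condition for $(X,\lfloor D\rfloor)$ applied to the ideal sheaf case via a standard formal-duality manipulation as in Section~\ref{sec:ext2}.

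\textbf{Main obstacle.} The principal technical difficulty is establishing the Du~Bois vanishing for the \emph{full family} of logarithmic differentials $\Omega^q_Z(\log \Delta)$, not merely for the structure sheaf. This requires the relative logarithmic Hodge theory of Navarro--Aznar and Steenbrink in a form compatible with a genuinely non-snc log canonical pair, going beyond the cases recorded earlier in the excerpt. My fall-back plan is to first pass to a dlt modification $(X', D')$ of $(X, D)$ (which exists by recent MMP results), and to use the codimension-two structure results for dlt pairs exploited in Section~\ref{ssec:res}, together with the residue sequence of Theorem~\ref{thm:relativereflexiveresidue}, to reduce inductively to the snc situation where classical Steenbrink vanishing applies. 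A secondary difficulty is carefully tracking the behaviour of $\Delta$ and of $\lfloor D\rfloor$ under such modifications so that the logarithmic poles and the log canonical condition are preserved at each step.
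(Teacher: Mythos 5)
Your reduction via duality is where the argument breaks down, and it breaks down irreparably. Grothendieck duality for the proper morphism $\gamma : Z \to X$ identifies the derived push-forward of the Serre dual of $\mathcal F := \Omega^p_Z(\log \Delta)\otimes\mathcal O_Z(-\Delta)$ with the derived dual of $R\gamma_*\mathcal F$ against the dualizing \emph{complex} of the singular variety $X$; since the right-hand side mixes all the sheaves $R^j\gamma_*\mathcal F$ through a spectral sequence, one cannot read off an equivalence between the vanishing of $R^{n-1}\gamma_*\mathcal F$ and that of $R^1\gamma_*\bigl(\mathcal F^\vee\otimes\omega_Z\bigr)$. The correct local form of duality here is Hartshorne's formal duality, quoted in this survey as Theorem~\ref{thm:duality}: after completion at a point $x$ it converts $(R^{n-1}\gamma_*\mathcal F)_x$ into the dual of the local cohomology group $H^1_{\gamma^{-1}(x)}\bigl(Z,\, \mathcal F^\vee\otimes\omega_Z\bigr)$ with supports in the fibre, which is a genuinely different object from $R^1\gamma_*\bigl(\mathcal F^\vee\otimes\omega_Z\bigr)$. (Your linear-algebra computation $\bigl(\Omega^p_Z(\log\Delta)\otimes\mathcal O_Z(-\Delta)\bigr)^\vee\otimes\omega_Z \simeq \Omega^{n-p}_Z(\log\Delta)$ is correct and is exactly the one used in Section~\ref{sec:ext2}; it is the cohomological step that is misapplied.)

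Worse, the statement you reduce to is false, so no repair of the duality step can rescue the plan. For $q=0$ your target reads $R^1\gamma_*\Omega^0_Z(\log\Delta) = R^1\gamma_*\mathcal O_Z = 0$, i.e.\ every log canonical singularity would be rational in degree one. The cone $X$ over an elliptic curve $E$ (so $n=2$, $D=\emptyset$, $\Delta=E$) is log canonical, yet $R^1\gamma_*\mathcal O_Z \cong H^1(E,\mathcal O_E) \neq 0$, while $R^1\gamma_*\omega_Z = 0$ by Grauert--Riemenschneider; this one example refutes both your claimed equivalence and its target. The same example shows that your ``Du~Bois input'' is miscalibrated: Du~Bois-ness of a pair is a statement about the zeroth graded piece $\underline{\Omega}^0$ only --- in resolution terms, about $R\gamma_*\mathcal O_Z(-\Delta)$ versus the ideal sheaf of the boundary --- and it yields no vanishing for $R^{\geq 1}\gamma_*\Omega^q_Z(\log\Delta)$, which, as just seen, already fails for $q=0$. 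The twist by $\mathcal O_Z(-\Delta)$ in the statement is therefore not a decoration to be dualized away but the essential feature. Consistently with the remark following the statement in this survey, a correct argument (as in \cite{GKKP10}) keeps the twist throughout, settles $p=n$ by Grauert--Riemenschneider, and combines formal duality and the snc Steenbrink theorem with the Du~Bois property fed in through $R\gamma_*\mathcal O_Z(-\Delta)$, not through the untwisted logarithmic differentials.
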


The proof of Theorem~\ref{thm:Omegavanishing} essentially relies on the fact
that log canonical pairs are Du~Bois, \cite{KKLogCanonicalDuBois}. The Du~Bois
property generalises the notion of rational singularities. For an overview,
see \cite{Kovacs-Schwede09}.

\subsubsection{Surjectivity of the inclusion map \eqref{eq:restr3}}
\label{sec:ext3}

Let $\sigma \in H^0 \bigl( Z,\, \Omega^p_Z (\log E) \bigr)$ be any differential
form on $Z$ that is allowed to have logarithmic poles along $E$. To show
surjectivity of the inclusion map \eqref{eq:restr3}, we need to show that
$\sigma$ really does not have any poles along $E$. To give an idea of the
methods used to prove this, we consider only the case where $p > 1$. We
discuss two particularly simple cases first.

\subsubsection*{The case where $E$ is irreducible}

Assume that $E$ is irreducible. To show that $\sigma$ does not have any
logarithmic poles along $E$, recall from Fact~\ref{fact:poletest} that it
suffices to show that $\sigma$ is in the kernel of the residue map
$$
\rho^p : H^0\bigl( Z,\, \Omega^p_Z(\log E) \bigr) \to H^0\bigl( E,\,
\Omega^{p-1}_E \bigr).
$$
On the other hand, we know from a result of Hacon-McKernan,
\cite[Cor.~1.5(2)]{HMcK07}, that $E$ is rationally connected, so that
$H^0\bigl( E,\, \Omega^{p-1}_E \bigr) = 0$. This clearly shows that $\sigma$
is in the kernel of $\rho^p$ and completes the proof when $E$ is irreducible.

\subsubsection*{The case where $(Z,E)$ admits a simple minimal model program}

In general, the divisor $E$ need not be irreducible. Let us therefore consider
the next difficult case where $E$ is reducible with two components, say $E =
E_1 \cup E_2$. The resolution map $\gamma$ will then factor via a
$\gamma$-relative minimal model program of the pair $(Z, E)$, which we assume
for simplicity to have the following particularly special form,
sketched\footnote{The computer code used to generate the images in
  Figure~\ref{fig:srklt} is partially taken from \cite{Baum}.} also in
Figure~\ref{fig:srklt}.
\begin{figure}
  \centering

  \ \\

  $$
  \xymatrix{
    \begin{picture}(4.8,4)(0,0)
      \put( 1.0, 0.2){\includegraphics[height=3.5cm]{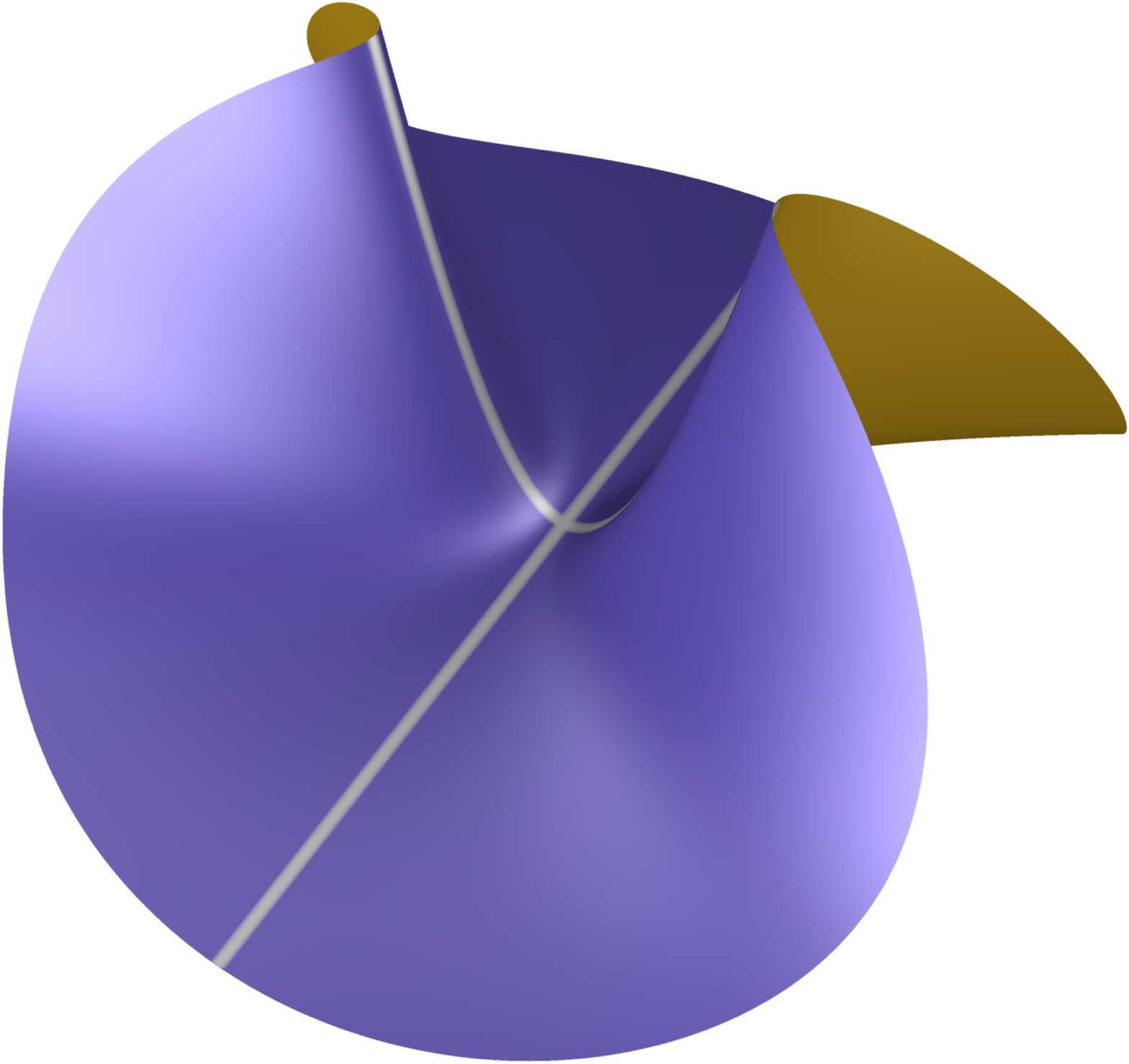}}
      \put( 0.0, 4.2){snc surface pair $(Z, E_1+E_2)$}
      \put( 0.2, 3.4){\scriptsize divisor $E_1$}
      \put( 1.5, 3.4){\vector(4, -1){0.6}}
      \put(-0.5, 1.2){\scriptsize divisor $E_2$}
      \put( 0.8, 1.2){\vector(4, -1){1.1}}
    \end{picture} \quad
    \ar[rr]^(.54){\lambda_1}_(.54){\text{contracts $E_1$}}
    \ar@<-10mm>@/_5mm/[drr]_(.55){\text{resolution map\ }}^(.55){\gamma} &&
    \begin{picture}(4,4)(0,0)
      \put( 0.0, 0.2){\includegraphics[height=3.5cm]{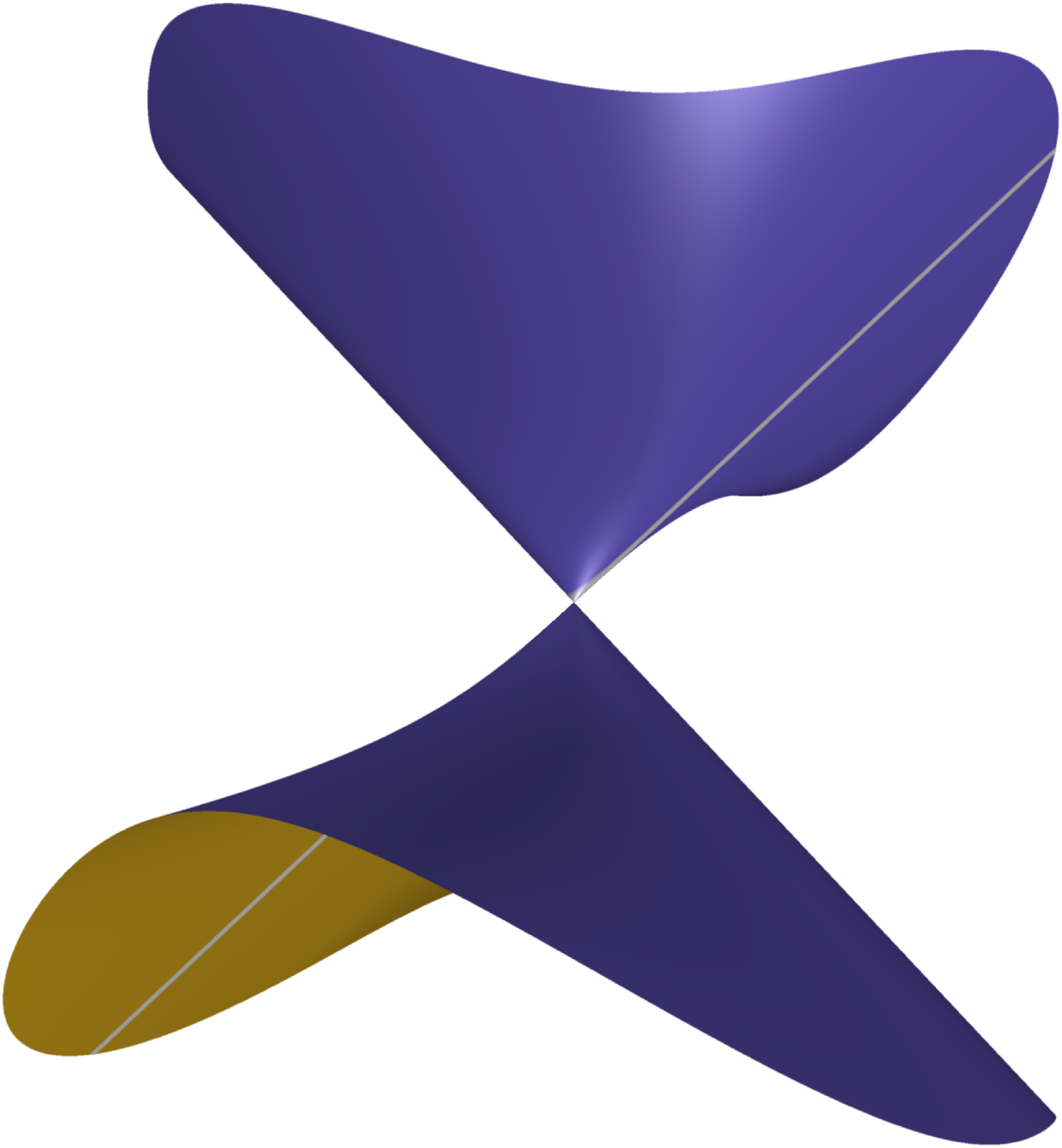}}
      \put( 0.0, 4.2){dlt surface pair $(Z_1, E_{2,1})$}
      \put( 2.8, 1.4){\scriptsize divisor $E_{2,1}$}
      \put( 2.7, 1.6){\vector(-1, 4){0.2}}
    \end{picture}
    \ar[d]_(.55){\lambda_2}^(.55){\text{contracts $E_{2,1}$}} \\ &&
    \begin{picture}(3.5, 3.2)(0,0)
      \put( 0.0, 0.0){\includegraphics[width=3.5cm]{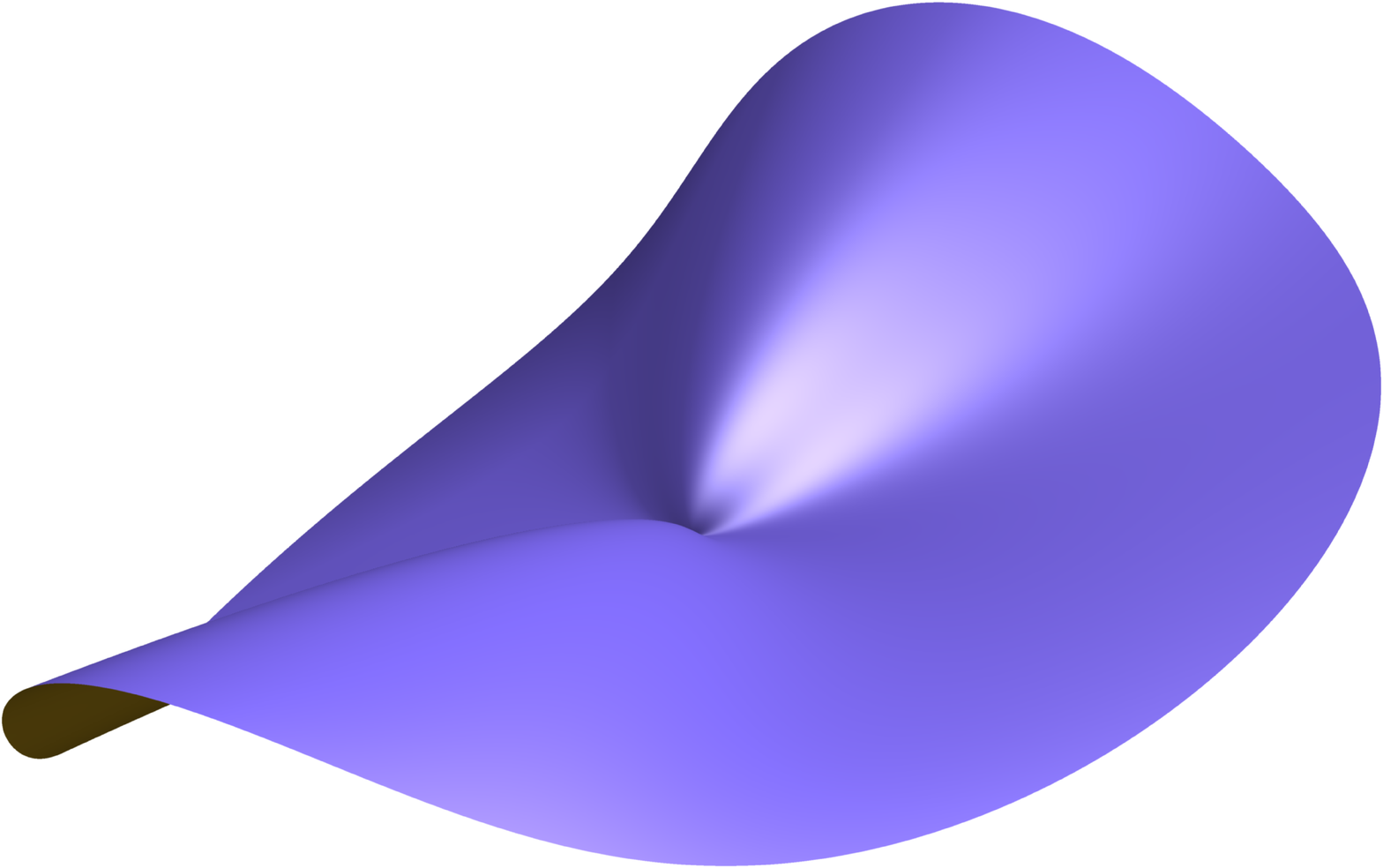}}
      \put( 0.0, 2.5){klt surface $X$}
    \end{picture}
  }
  $$

  {\small This sketch shows a resolution of an isolated klt surface singularity, and the
    decomposition of the resolution map given by the minimal model program of the snc pair
    $(Z, E_1+E_2)$.}

  \caption{Resolution of an isolated klt surface singularity}
  \label{fig:srklt}
\end{figure}
$$
\xymatrix{ %
  Z = Z_0 \ar[rrrr]^(.55){\lambda_1}_(.55){\text{contracts $E_1$ to a point}}
  &&&& Z_1 \ar[rrrrr]^{\lambda_2}_{\text{contracts $E_{2,1} :=
      (\lambda_1)_*(E_2)$ to a point}} &&&&& X. }
$$
In this setting, the arguments outlined above apply verbatim to show that
$\sigma$ has no poles along the divisor $E_1$. To show that $\sigma$ does not
have any poles along the remaining component $E_2$, observe that it suffices
to consider the induced reflexive form on the possibly singular space $Z_1$,
say $\sigma_1 \in H^0 \bigl( Z_1 ,\, \Omega^{[p]}_{Z_1} (\log E_{2,1})
\bigr)$, where $E_{2,1} := (\lambda_1)_*(E_2)$, and to show that $\sigma_1$
does not have any poles along $E_{2,1}$.  For that, we follow the same line of
argument once more, accounting for the singularities of the pair $(X_1,
E_{2,1})$.

The pair $(X_1, E_{2,1})$ is dlt, and it follows from adjunction that the
divisor $E_{2,1}$ is necessarily normal, \cite[Cor.~5.52]{KM98}. Using the
residue map for reflexive differentials on dlt pairs that was constructed in
Theorem~\ref{thm:relativereflexiveresidue},
$$
\rho^{[p]} : H^0\bigl( X_1,\, \Omega^{[p]}_{Z_1}(\log E_{2,1}) \bigr) \to
H^0\bigl( E_{2,1},\, \Omega^{[p-1]}_{E_{2,1}} \bigr),
$$
we have seen in Remark~\ref{rem:poletest} that it suffices to show that
$\rho^{[p]}(\sigma_1) = 0$. Because the morphism $\lambda_2$ contracts the
divisor $E_{2,1}$ to a point, the result of Hacon-McKernan will again apply to
show that $E_{2,1}$ is rationally connected. Even though there are numerous
examples of rationally connected spaces that carry non-trivial reflexive
forms, we claim that in our special setup we do have the vanishing
\begin{equation}\label{eq:nrdf}
  H^0\bigl( E_{2,1},\, \Omega^{[p-1]}_{E_{2,1}} \bigr) = 0.
\end{equation}

For this, recall from the adjunction theory for Weil divisors on normal
spaces, \cite[Chapt.~16 and Prop.~16.5]{FandA92} and \cite[Sect.~3.9 and
Glossary]{MR2352762}, that there exists a Weil divisor $D_E$ on the normal
variety $E_{2,1}$ which makes the pair $(E_{2,1}, D_E)$ klt. Now, if we knew
that the extension theorem would hold for the pair $(E_{2,1}, D_E)$, we can
prove the vanishing statement~\eqref{eq:nrdf}, arguing exactly as in the proof
of Corollary~\ref{cor:kltRCC}, where we show the non-existence of reflexive
forms on rationally connected klt spaces as a corollary of the Pull-Back
Theorem~\ref{thm:generalpullback}. Since $\dim E_{2,1} < \dim X$, this
suggests an inductive proof, beginning with easy-to-prove extension theorems
for reflexive forms on surfaces, and working our way up to higher-dimensional
varieties. The proof in \cite{GKKP10} follows this inductive pattern.

\subsubsection*{The general case}

To handle the general case, where the Simplifying
Assumptions~\ref{sass:steenbrink} do not necessarily hold true, we need to
work with pairs $(X,D)$ where $D$ is not necessarily empty, the
$\gamma$-relative minimal model program might involve flips, and the
singularities of $X$ need not be isolated. All this leads to a slightly
protracted inductive argument, heavily relying on cutting-down methods and
outlined in detail in \cite[Sect.~19]{GKKP10}.

\subsection{Open problems}

In view of the Viehweg-Zuo construction, it would be very interesting to know
if a variant of the Pull-Back Theorem~\ref{thm:generalpullback} holds for
symmetric powers of $\Omega^{[1]}_X(\log D)$, or for other tensor powers. As
shown by examples, cf.~\cite[Ex.~3.1.3]{GKK08}, the na\"ive generalisation of
Theorem~\ref{thm:generalpullback} is wrong. Still, it seems conceivable that a
suitable generalisation, perhaps formulated in terms of Campana's orbifold
differentials, might hold. However, note that several of the key ingredients
used in the proof of Theorem~\ref{thm:generalpullback}, including Steenbrink's
vanishing theorem, rely on (local) Hodge theory, for which no version is known
for tensor powers of differential forms.

\begin{question}
  Is there a formulation of the Pull-Back Theorem~\ref{thm:generalpullback}
  that holds for symmetric and other tensor powers of differential forms?
\end{question}

Examples suggest that the Pull-Back Theorem~\ref{thm:generalpullback} is
optimal, and that the class of log canonical pairs is the natural class of
spaces where a pull-back theorem can hold. 

\begin{question}
  To what extend is the Pull-Back Theorem~\ref{thm:generalpullback} optimal?
  Is there a version of the pull-back theorem that does not require the log
  canonical divisor $K_X+D$ to be $\mathbb Q$-Cartier? If we are interested
  only in special values of $p$, is the divisor $\Delta$ the smallest
  possible?
\end{question}

The last question concerns the generalisation of the Bogomolov-Sommese
vanishing theorem. One of the main difficulties with its current formulation
is the requirement that the sheaf $\mathcal A$ be $\mathbb Q$-Cartier. We have
seen in Section~\ref{ssec:motiv} how interesting reflexive subsheaves
$\mathcal A \subseteq \Omega^{[p]}_X$ can often be constructed using the
Harder-Narasimhan filtration. Unless the space $X$ is $\mathbb Q$-factorial,
there is, however, no way to guarantee that a sheaf constructed this way will
actually be $\mathbb Q$-Cartier. The property to be $\mathbb Q$-factorial,
however, is not stable under taking hyperplane sections and difficult to
guarantee in practise.

\begin{question}
  Is there a version of the generalised Bogomolov-Sommese vanishing theorem,
  Corollary~\ref{cor:BS}, that does not require the sheaf $\mathcal A$ to be
  $\mathbb Q$-Cartier?
\end{question}

\section{Viehweg's conjecture for families over threefolds, sketch of proof}
\label{sec:ideaOfProof}

\subsection{A special case of the Viehweg conjecture}

We conclude this paper by sketching a proof of the Viehweg
Conjecture~\ref{conj:Viehweg} in one special case, illustrating the use of the
methods introduced in Sections~\ref{sec:VZ} and \ref{sec:ext}. As in
Section~\ref{ssec:motiv} we consider a family $f^\circ : X^\circ \to Y^\circ$
of canonically polarised varieties over a quasi-projective threefold. Assuming
that $f^\circ$ is of maximal variation, we would like to show that the
logarithmic Kodaira dimension $\kappa(Y^\circ)$ cannot be zero.

\begin{myprop}[Partial answer to Viehweg's conjecture]\label{prop:vzsimple2}
  Let $f^\circ : X^\circ \to Y^\circ$ be a smooth, projective family of
  canonically polarised varieties over a smooth, quasi-projective base
  manifold of dimension $\dim Y^\circ = 3$. Assume that the family $f^\circ$
  is of maximal variation, i.e., that $\Var(f^\circ) = \dim Y^\circ$. Then
  $\kappa(Y^\circ) \not = 0$.
\end{myprop}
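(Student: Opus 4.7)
The plan is to adapt the argument of Proposition~\ref{prop:VZec1}, replacing the Picard-number-one hypothesis by an application of the log MMP in dimension three and then working with reflexive differentials on the resulting dlt pair via the machinery of Section~\ref{sec:ext}.

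Assume for contradiction that $\kappa(Y^\circ)=0$. Fix a smooth projective compactification $Y\supseteq Y^\circ$ with $D:=Y\setminus Y^\circ$ snc. First I would run the $(K_Y+D)$-MMP on $(Y,D)$. In dimension three both termination and log abundance are known, and this produces a birational map $\lambda : Y \dasharrow Y_\lambda$ whose inverse contracts no divisors, together with a dlt pair $(Y_\lambda, D_\lambda)$ for which $K_{Y_\lambda}+D_\lambda$ is semiample of Kodaira-Iitaka dimension zero, hence $\mathbb Q$-linearly trivial. After a small $\mathbb Q$-factorialisation I may further assume $Y_\lambda$ is $\mathbb Q$-factorial. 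Proposition~\ref{lem:pushdownA} then transports the Viehweg-Zuo sheaf of Theorem~\ref{thm:VZ} to $Y_\lambda$, producing a big reflexive rank-one subsheaf
\begin{equation*}
  \mathcal A \subseteq \Sym^{[m]}\Omega^{[1]}_{Y_\lambda}(\log D_\lambda),
  \qquad \kappa(\mathcal A)\geq \Var(f^\circ)=\dim Y_\lambda.
\end{equation*}

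Next, imitating Proposition~\ref{prop:VZec1}, I would fix a general Mehta-Ramanathan complete intersection curve $C$ inside the snc locus of $(Y_\lambda, D_\lambda)$. Bigness of $\mathcal A$ gives $c_1(\mathcal A)\cdot C>0$, while numerical triviality of $K_{Y_\lambda}+D_\lambda$ forces $c_1\bigl(\Sym^{[m]}\Omega^{[1]}_{Y_\lambda}(\log D_\lambda)\bigr)\cdot C=0$. Thus the reflexive symmetric differentials are not $\mu_C$-semistable, and because we are in characteristic zero neither is $\Omega^{[1]}_{Y_\lambda}(\log D_\lambda)$. A maximally destabilising reflexive subsheaf $\mathcal B\subsetneq \Omega^{[1]}_{Y_\lambda}(\log D_\lambda)$ then has rank $r$ with $1\leq r<\dim Y_\lambda$ and $c_1(\mathcal B)\cdot C>0$; by $\mathbb Q$-factoriality its determinant is a $\mathbb Q$-Cartier reflexive subsheaf $\det\mathcal B\subseteq \Omega^{[r]}_{Y_\lambda}(\log D_\lambda)$. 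The generalised Bogomolov-Sommese vanishing theorem for log canonical pairs, Corollary~\ref{cor:BS}, will then deliver the contradiction as soon as we know $\kappa(\det \mathcal B)\geq r+1$.

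The main obstacle is precisely this last step: upgrading numerical positivity of $\det \mathcal B$ along a single Mehta-Ramanathan curve to the Kodaira-Iitaka bound $\kappa(\det \mathcal B)\geq r+1$ is delicate, because positive slope with respect to one polarisation is in general much weaker than bigness. In the log Calabi-Yau situation $K_{Y_\lambda}+D_\lambda\equiv 0$ produced by the MMP, the slopes of $\Omega^{[1]}_{Y_\lambda}(\log D_\lambda)$ along every Mehta-Ramanathan curve are controlled by a generic semipositivity theorem of Miyaoka-Campana-Păun type, which transfers via Theorem~\ref{thm:generalpullback} and the residue theory of Section~\ref{ssec:res} from a log resolution of $(Y_\lambda, D_\lambda)$ back to the dlt pair itself. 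Combined with the codimension-two structural analysis of dlt pairs underlying Section~\ref{sec:ext}, this controls the Harder-Narasimhan filtration on every sufficiently general polarising curve simultaneously, and thereby promotes $\det \mathcal B$ from numerically positive on a single curve to honestly big, completing the contradiction with Corollary~\ref{cor:BS}.
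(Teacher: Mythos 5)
Your setup (running the $(K_Y+D)$-MMP, using log abundance in dimension three to get a dlt pair $(Y_\lambda,D_\lambda)$ with $K_{Y_\lambda}+D_\lambda\equiv 0$, and transporting the Viehweg--Zuo sheaf via Proposition~\ref{lem:pushdownA}) matches the paper exactly. The gap is in what you do next. You try to reach a contradiction directly on $Y_\lambda$ by applying Corollary~\ref{cor:BS} to $\det\mathcal B$, and you correctly identify that this requires $\kappa(\det\mathcal B)\geq r+1$ --- but the fix you propose does not exist. Positive degree of $\det\mathcal B$ on a single Mehta--Ramanathan curve class gives no control on $\kappa(\det\mathcal B)$ when $\rho(Y_\lambda)>1$, and ``generic semipositivity of Miyaoka--Campana--P\u{a}un type'' cannot promote it to bigness: the Harder--Narasimhan filtration, and hence the subsheaf $\mathcal B$ itself, depends on the chosen polarisation, so there is no single $\mathcal B$ whose determinant is positive against all movable classes, which is what a BDPP-type argument for bigness would need. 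In Proposition~\ref{prop:VZec1} this step is rescued precisely by the hypothesis $\rho(Y)=1$, which you have discarded without replacing it.

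The paper's proof avoids this obstruction entirely by never seeking a direct Bogomolov--Sommese contradiction on $Y_\lambda$. Instead it proceeds in stages. First (Claim~\ref{claim:5-4}) it shows $D_\lambda\neq\emptyset$: if $D_\lambda=0$, the destabilising argument is applied to the \emph{tangent} sheaf $\mathcal T_{Y_\lambda}|_C$, whose maximal destabilising subsheaf is ample on $C$, so the Kebekus--Sol\'a Conde--Toma variant of Miyaoka's uniruledness criterion forces $Y_\lambda$ to be uniruled, contradicting $\kappa(W)\geq 0$ for a resolution $W$ (which follows from $K_{Y_\lambda}\equiv 0$ and canonical singularities). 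Then $K_{Y_\lambda}\equiv -D_\lambda$ gives $\kappa(Y_\lambda)=-\infty$, so a second MMP run for $\bigl(Y_\lambda,(1-\varepsilon)D_\lambda\bigr)$ terminates in a Mori fibre space $Y_\mu\to Z$ with $(Y_\mu,D_\mu)$ log canonical (Claim~\ref{claim:smuklt}). Only at this point is the Bogomolov--Sommese argument invoked (Claim~\ref{claim:55}), and only to exclude $\rho(Y_\mu)=1$ --- the one situation in which positive degree on a curve really does imply $\mathbb Q$-ampleness of $\det\mathcal B_\mu$. The conclusion $\rho(Y_\mu)>1$ yields a genuine fibre space whose general fibre $F$ satisfies $\kappa(K_F+D_F)=0$ and has dimension at most two, and the proof closes by induction: the restricted family over (the strict transform of) $F$ cannot have maximal variation, contradicting $\Var(f^\circ)=3$ since the fibres dominate $Y_\mu$. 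You are missing the case distinction on $D_\lambda$, the uniruledness argument, the second MMP and the resulting fibre space, and the induction on fibre dimension; these are not optional refinements but the substance of the proof.
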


The proof of Proposition~\ref{prop:vzsimple2} follows the line of
argumentation outlined in Section~\ref{ssec:motiv}. We prove that the Picard
number of a suitable minimal model cannot be one, thereby exhibiting a fibre
space structure to which induction can be applied. The presentation follows
\cite[Sect.~9]{KK08c}.

\subsection{Sketch of proof}

In essence, we follow the line of argument sketched in
Section~\ref{ssec:motiv}. We argue by contradiction, i.e., we maintain the
assumptions of Proposition~\ref{prop:vzsimple2} and assume in addition that
$\kappa(Y^\circ) = 0$.

\subsubsection{Setup of notation}

As before, choose a smooth compactification $Y \supseteq Y^\circ$ such that $D
:= Y \setminus Y^\circ$ is a divisor with only simple normal crossings. Let
$\lambda : Y \dasharrow Y_\lambda$ be the rational map obtain by a run of the
minimal model program for the pair $(Y,D)$ and set $D_\lambda :=
\lambda_*(D)$. The following is then known to hold.
\begin{enumerate}
\item The variety $Y_\lambda$ is normal and $\mathbb Q$-factorial.
\item The variety $Y_\lambda$ is log terminal. The pair $(Y_\lambda,
  D_\lambda)$ is dlt.
\item There exists a number $m'$ such that $m' \bigl( K_{Y_\lambda} +
  D_\lambda \bigr) \equiv 0$. In particular, the divisor $K_{Y_\lambda} +
  D_\lambda$ is numerically trivial.
\end{enumerate}
By Viehweg-Zuo's Theorem~\ref{thm:VZ}, there exists a number $m > 0$ and a big
invertible sheaf $\sA \subseteq \Sym^m \Omega^1_Y (\log D)$. As we have seen
in Proposition~\ref{lem:pushdownA}, this induces a reflexive sheaf
$\sA_\lambda \subseteq \Sym^{[m]} \Omega^1_{Y_\lambda} (\log D_\lambda)$ of
rank one and Kodaira-Iitaka dimension $\kappa(\sA_\lambda) = \dim Y_\lambda$.

\subsubsection{The Harder-Narasimhan filtration of $\Omega^{[1]}_{Y_\lambda}(\log D_\lambda)$}

As in Section~\ref{ssec:motiv} above, we employ the Harder-Narasimhan
filtration to obtain additional information about the space $Y_\lambda$.

\begin{claim}\label{claim:5-4}
  The divisor $D_\lambda$ is not empty.
\end{claim}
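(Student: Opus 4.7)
The approach is to argue by contradiction: assume $D_\lambda = \emptyset$ and adapt the destabilisation argument of Proposition~\ref{prop:VZec1} to the singular $\mathbb{Q}$-factorial klt setting produced by the MMP. The three technical inputs are Proposition~\ref{lem:pushdownA} (so that the Viehweg-Zuo sheaf survives $\lambda$), the general Bogomolov-Sommese vanishing Corollary~\ref{cor:BS}, and the Mehta-Ramanathan theorem on general complete-intersection curves; the last applies because $\codim_{Y_\lambda}(Y_\lambda)_{\rm sing}\geq 2$, so a sufficiently general curve avoids the singular locus entirely.

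Under the hypothesis $D_\lambda = \emptyset$, the third item in the setup forces $K_{Y_\lambda}\equiv 0$, and the dlt property of $(Y_\lambda,0)$ means that $Y_\lambda$ is itself klt. By construction the sheaf $\sA_\lambda\subseteq\Sym^{[m]}\Omega^{[1]}_{Y_\lambda}$ is big, with $\kappa(\sA_\lambda)=\dim Y_\lambda=3$. Fix an ample class $H$ on $Y_\lambda$ and a general complete-intersection curve $C\subset Y_\lambda$ lying in the smooth locus. Then $\mu_H(\Omega^{[1]}_{Y_\lambda}|_C)=(K_{Y_\lambda}\cdot H^{\dim Y_\lambda-1})/3=0$, while $\deg_C\sA_\lambda=c_1(\sA_\lambda)\cdot H^{\dim Y_\lambda-1}>0$ by bigness. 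Consequently $\Sym^m\Omega^1_{Y_\lambda}|_C$ is $H$-unstable, and since semistability is preserved by symmetric powers in characteristic zero, so is $\Omega^{[1]}_{Y_\lambda}|_C$. Mehta-Ramanathan then provides a global maximal destabilising subsheaf $\mathcal{B}\subseteq\Omega^{[1]}_{Y_\lambda}$ of rank $r\in\{1,2\}$ with positive slope $\mu_H(\mathcal{B})>0$. Taking reflexive determinants and wedging on the smooth locus yields an injection $\det\mathcal{B}\hookrightarrow\Omega^{[r]}_{Y_\lambda}$, automatically $\mathbb{Q}$-Cartier by $\mathbb{Q}$-factoriality of $Y_\lambda$, so Corollary~\ref{cor:BS} bounds $\kappa(\det\mathcal{B})\leq r$. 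The remaining task is to reach a contradiction by proving the reverse strict inequality $\kappa(\det\mathcal{B})\geq r+1$.

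The hard part will be precisely this last upgrade. In Proposition~\ref{prop:VZec1} the assumption $\rho(Y)=1$ allowed one to pass directly from ``positive intersection with one curve'' to ``ample, hence big'', but here the Picard number of $Y_\lambda$ is unrestricted and that shortcut is unavailable. Following the strategy of \cite[Sect.~9]{KK08c}, one exploits the numerical identity $c_1(\det\mathcal{B})+c_1\bigl(\det(\Omega^{[1]}_{Y_\lambda}/\mathcal{B})\bigr)=K_{Y_\lambda}\equiv 0$ together with the stronger fact, forced by bigness of $\sA_\lambda$, that $\mathcal{B}$ destabilises $\Omega^{[1]}_{Y_\lambda}$ with respect to every ample polarisation. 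Combined with generic semi-positivity of the reflexive cotangent sheaf of the $K$-trivial klt variety $Y_\lambda$, these ingredients promote the numerical positivity of $\det\mathcal{B}$ to genuine bigness, contradicting Corollary~\ref{cor:BS} and thereby proving $D_\lambda\neq\emptyset$.
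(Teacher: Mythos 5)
Your reduction to the singular setting (push-forward of the Viehweg--Zuo sheaf, a Mehta--Ramanathan curve in the smooth locus, destabilisation of $\Omega^{[1]}_{Y_\lambda}$) matches the first half of the paper's argument, but the final step is a genuine gap, and you have in fact flagged it yourself: nothing in your sketch proves that $\det\mathcal B$ is big. Positivity of $c_1(\det\mathcal B)\cdot H^{\dim Y_\lambda-1}$ for one (or even every) ample $H$ does not imply bigness once $\rho(Y_\lambda)>1$; the maximal destabilising subsheaf may vary with the polarisation; and the identity $c_1(\det\mathcal B)+c_1\bigl(\det(\Omega^{[1]}_{Y_\lambda}/\mathcal B)\bigr)\equiv K_{Y_\lambda}\equiv 0$ only says the quotient has negative degree. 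Moreover, invoking ``generic semi-positivity of the reflexive cotangent sheaf'' of $Y_\lambda$ presupposes that $Y_\lambda$ is not uniruled --- which is not free here, and which, if you had it together with semipositivity, would already contradict the existence of a positively destabilising subsheaf outright, making the whole bigness discussion superfluous. As written, the decisive implication is either circular or unjustified.

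The paper closes the argument differently and without any appeal to Bogomolov--Sommese: it dualises and works with the \emph{tangent} sheaf. Since $\mathcal T_{Y_\lambda}|_C$ is destabilised, its maximal destabilising subsheaf is semistable of positive degree on $C$, hence ample, and the variant \cite[Cor.~5]{KST07} of Miyaoka's uniruledness criterion yields that $Y_\lambda$ is uniruled. On the other hand, after reducing to canonical singularities via an index-one cover, a resolution $r\colon W\to Y_\lambda$ satisfies $K_W\equiv r^*(K_{Y_\lambda})+(\text{effective, }r\text{-exceptional})$ with $K_{Y_\lambda}$ $\mathbb Q$-linearly trivial, so $\kappa(W)\geq 0$, contradicting uniruledness. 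If you wish to keep your cotangent-side formulation, the clean repair is to run this in the contrapositive: first establish $\kappa(W)\geq 0$ and hence non-uniruledness as above, then apply Miyaoka's generic semipositivity to see that $\Omega^{[1]}_{Y_\lambda}|_C$ is nef of total degree zero, hence semistable --- directly contradicting the destabilisation forced by $\sA_\lambda$. Either way, the bigness upgrade you identified as ``the hard part'' is not the step to aim for.
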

\begin{proof}
  For simplicity, we prove Claim~\ref{claim:5-4} only in case where the
  canonical divisor $K_{Y_\lambda}$ is Cartier, and where the space
  $Y_\lambda$ therefore has only canonical singularities. For a proof in the
  general setup, the same line of argumentation applies after passing to a
  global index-one cover. We argue by contradiction and assume that $D_\lambda
  = 0$.
  
  As before, let $C \subseteq Y_\lambda$ be a general complete intersection
  curve in the sense of Mehta-Ramanathan, cf.~\cite[Sect.~II.7]{HL97}.  Since
  the general complete intersection curve $C$ avoids the singular locus of
  $Y_\lambda$, we obtain that the restricted sheaf of Kähler differentials
  $\Omega^1_{Y_\lambda}|_C$ as well as its dual $\mathcal T_{Y_\lambda}|_C$,
  the restriction of the tangent sheaf, are locally free.  Further, the
  numerical triviality of $K_{Y_\lambda} \equiv K_{Y_\lambda} + D_\lambda$
  implies that
  $$
  K_{Y_\lambda}.C = c_1 \bigl( \Omega^{[1]}_{Y_\lambda}(\log D_\lambda)
  \bigr).C = c_1\bigl(\Sym^{[m]} \Omega^1_{Y_\lambda}(\log D_\lambda) \bigr).C
  = 0.
  $$
  On the other hand, since $\sA_\lambda$ is big, we have that $c_1(\mathcal
  A_\lambda).C > 0$. As in the proof of Proposition~\ref{prop:VZec1}, this
  implies that the restricted sheaves $\Omega^1_{Y_\lambda}|_C$ as well as its
  dual $\mathcal T_{Y_\lambda}|_C$, are not semistable. The maximal
  destabilising subsheaf of $\mathcal T_{Y_\lambda}|_C$ is semistable and of
  positive degree, hence ample. In this setup, a variant \cite[Cor.~5]{KST07}
  of Miyaoka's uniruledness criterion \cite[Cor.~8.6]{Miy85} applies to give
  the uniruledness of $Y_\lambda$. For more details on this criterion, see the
  survey \cite{KS06}.

  To finish the argument, let $r : W \to Y_\lambda$ be a resolution of
  singularities.  Since uniruledness is a birational property, the space $W$
  is uniruled and therefore has Kodaira-dimension $\kappa(W)=-\infty$. On the
  other hand, since $Y_\lambda$ has only canonical singularities, the $\mathbb
  Q$-linear equivalence class of the canonical bundle $K_W$ is given as
  $$
  K_W \equiv r^*(K_{Y_\lambda}) + (\text{effective, $r$-exceptional divisor}).
  $$
  But because $K_{Y_\lambda}$ is $\mathbb Q$-linearly equivalent to the
  trivial divisor, we obtain that $\kappa(W) \geq 0$, a contradiction.
\end{proof}

\subsubsection{Further contractions}

Claim~\ref{claim:5-4} implies that $K_{Y_\lambda} \equiv - D_\lambda$ and it
follows that for any rational number $0 < \varepsilon < 1$,
\begin{equation}\label{eq:kappa0infty}
  \kappa\bigl(K_{Y_\lambda}+(1-\varepsilon) D_\lambda \bigr) =
  \kappa\bigl(\varepsilon K_{Y_\lambda} \bigr) =
  \kappa\bigl({Y_\lambda} \bigr) = -\infty.
\end{equation}
Now choose one $\varepsilon$ and run the log minimal model program for the dlt
pair $\bigl(Y_\lambda, (1-\varepsilon)D_\lambda\bigr)$. This way one obtains
morphisms and birational maps as follows
$$
\xymatrix{
 Y_\lambda \ar@{-->}[rrr]^{\mu}_{\text{minimal model program}} &&& Y_\mu
 \ar[rrr]^{\pi}_{\text{Mori fibre space}} &&& Z.
}
$$
Again, let $D_\mu := \mu_*(D_\lambda)$ be the cycle-theoretic image of
$D_\lambda$. The main properties of $Y_\mu$ and $D_\mu$ are summarised as
follows.
\begin{enumerate}
\item The variety $Y_\mu$ is normal and $\mathbb Q$-factorial.
\item The variety $Y_\mu$ is log terminal. The pair $\bigl(Y_\mu,
  (1-\varepsilon) D_\mu \bigr)$ is dlt.
\item The divisor $K_{Y_\mu} + D_\mu$ is numerically trivial.
\item There exists a reflexive sheaf $\mathcal A_\mu \subseteq \Sym^{[m]}
  \Omega^1_{Y_\mu} (\log D_\mu)$ of rank one and Kodaira-Iitaka dimension
  $\kappa(\sA_\mu) = \dim Y_\mu$.

\end{enumerate}
In fact, more is true.

\begin{claim}\label{claim:smuklt}
  The pair $(Y_\mu, D_\mu)$ is log canonical.
\end{claim}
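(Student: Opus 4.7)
The claim should follow by propagating the log canonicity of the initial dlt pair $(Y_\lambda, D_\lambda)$ through each step of the minimal model program run on $\bigl(Y_\lambda, (1-\varepsilon) D_\lambda\bigr)$. The crucial observation is that, because $K_{Y_\lambda} + D_\lambda \equiv 0$, every birational operation performed along the way is in fact $(K + D)$-trivial when viewed with the full boundary, and $(K+D)$-trivial MMP steps preserve the log canonical property.

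More precisely, I would argue by induction on the steps of $\mu$. Write $Y_0 := Y_\lambda$, let $Y_i$ denote the output of the $i$-th step, and let $D_i$ be the cycle-theoretic image of $D_\lambda$ on $Y_i$. The inductive hypothesis to maintain is that $(Y_i, D_i)$ is log canonical, that $Y_i$ is $\mathbb Q$-factorial, and that $K_{Y_i} + D_i$ is $\mathbb Q$-Cartier and numerically trivial. The base case $i = 0$ is given. For the inductive step, the $(K_{Y_i} + D_i)$-triviality of the $i$-th extremal contraction is automatic from numerical triviality. In the case of a divisorial contraction $\phi_i: Y_i \to Y_{i+1}$, the negativity lemma applied to the $\phi_i$-trivial, $\mathbb Q$-Cartier $\mathbb Q$-divisor $K_{Y_i} + D_i - \phi_i^*(K_{Y_{i+1}} + D_{i+1})$ upgrades numerical $\phi_i$-triviality to the honest $\mathbb Q$-linear identity
\[
K_{Y_i} + D_i = \phi_i^*(K_{Y_{i+1}} + D_{i+1});
\]
discrepancies of all divisors over $Y_{i+1}$ are therefore preserved, and log canonicity descends. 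For a flip $\phi_i: Y_i \dasharrow Y_{i+1}$ with flipping contractions $f_i: Y_i \to W_i$ and $f_i^+: Y_{i+1} \to W_i$, the same argument applied to $f_i$ gives $K_{Y_i} + D_i = f_i^*(K_{W_i} + D_{W_i})$, so log canonicity transfers from $Y_i$ to $W_i$; applying it once more to $f_i^+$ transfers log canonicity back to $Y_{i+1}$. Numerical triviality and the $\mathbb Q$-Cartier/$\mathbb Q$-factoriality properties are preserved by standard MMP machinery.

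Since the program terminates after finitely many steps at $Y_\mu$, the inductive conclusion at the final stage gives exactly the statement that $(Y_\mu, D_\mu)$ is log canonical. The point I expect to require the most care is the handling of flips: one must verify that the $\mathbb Q$-Cartier hypothesis on $K_{W_i} + D_{W_i}$ really is in force (which uses that the relative Picard number of $f_i$ equals one, so an $f_i$-numerically trivial $\mathbb Q$-Cartier class descends to $W_i$), and ensure that the log discrepancies computed on $W_i$ control those on both $Y_i$ and $Y_{i+1}$ simultaneously.
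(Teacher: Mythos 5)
Your argument is correct, but it is not the route the paper takes, so a comparison is worthwhile. The paper exploits the numerical identity $K_{Y_\lambda}+(1-\varepsilon')D_\lambda \equiv \varepsilon' K_{Y_\lambda} \equiv -\varepsilon' D_\lambda$ to observe that, up to positive rational multiples, all the log divisors $K_{Y_\lambda}+(1-\varepsilon')D_\lambda$ with $0<\varepsilon'<1$ are numerically proportional; hence the single birational map $\mu$ is simultaneously a run of the MMP for \emph{every} such pair, so $(Y_\mu,(1-\varepsilon')D_\mu)$ is dlt for every $\varepsilon'$, and letting $\varepsilon'\to 0$ (discrepancies being affine in the boundary coefficients) one concludes that $(Y_\mu,D_\mu)$ is log canonical as a ``limit of dlt pairs''. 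You instead track the full-coefficient pair step by step and use that each contraction and flip is $(K+D)$-trivial, so that the negativity lemma forces the crepant identities $K_{Y_i}+D_i=\phi_i^*(K_{Y_{i+1}}+D_{i+1})$ (resp.\ the analogous identities over $W_i$ for flips), whence discrepancies are preserved exactly and log canonicity descends. Both proofs hinge on the same input, namely $K_{Y_\lambda}+D_\lambda\equiv 0$; yours is the more standard, hands-on MMP bookkeeping and yields the slightly stronger conclusion that all log discrepancies of $(Y_\mu,D_\mu)$ agree with those of $(Y_\lambda,D_\lambda)$, while the paper's is a shorter ``perturbation of the boundary'' trick that avoids discussing individual contractions and flips altogether. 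The points you flag as delicate (descent of the $\mathbb Q$-Cartier class to $W_i$ via relative Picard number one, and the vanishing of the exceptional difference divisor by negativity) are exactly the right ones, and they go through; I see no gap.
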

\begin{proof}
  Since $K_{Y_\lambda} + D_\lambda \equiv 0$, some positive multiples of
  $K_{Y_\lambda}$ and $-D_\lambda$ are numerically equivalent. For any
  two rational numbers $0 < \varepsilon', \varepsilon'' < 1$, the divisors
  $K_{Y_\lambda} + (1-\varepsilon') D_\lambda$ and $K_{Y_\lambda} +
  (1-\varepsilon'') D_\lambda$ are thus again numerically equivalent up
  to a positive rational multiple.
  
  The birational map $\mu$ is therefore a minimal model program for the pair
  $\bigl(Y_\lambda, (1-\varepsilon) D_\lambda \bigr)$, independently of the
  number $\varepsilon$ chosen in its construction.  It follows that
  $\bigl(Y_\mu, D_\mu \bigr)$ is a limit of dlt pairs and therefore log
  canonical.
\end{proof}

\subsubsection{The fibre space structure of $\boldsymbol{Y_\mu}$}
\label{ssec:fiberspace}

Another application of the ``Harder-Narasimhan-trick'' exhibits a fibre
structure of $Y_\mu$.

\begin{claim}\label{claim:55}
  The Picard-number $\rho(Y_\mu)$ is larger than one. In particular, the map
  $Y_\mu \to Z$ is a proper fibre space whose fibres are proper subvarieties
  of $Y_\mu$.
\end{claim}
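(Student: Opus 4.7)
I will argue by contradiction: assume that $\rho(Y_\mu)=1$, and derive a contradiction from the generalised Bogomolov-Sommese vanishing theorem, Corollary~\ref{cor:BS}. The strategy mimics the proof of Proposition~\ref{prop:VZec1} in the smooth/Picard-rank-one setting, but carried out with reflexive differentials on the singular space $Y_\mu$; the ingredients assembled in Sections~\ref{sec:VZ} and \ref{sec:ext} are precisely what makes this transposition possible.

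First, I would observe that since $K_{Y_\mu}+D_\mu\equiv 0$, the first Chern class $c_1\bigl(\Omega^{[1]}_{Y_\mu}(\log D_\mu)\bigr)$ is numerically trivial. Let $C\subset Y_\mu$ be a general Mehta-Ramanathan complete intersection curve, chosen to avoid the codimension--two loci where our reflexive sheaves fail to be locally free. Then $c_1\bigl(\Sym^{[m]}\Omega^{[1]}_{Y_\mu}(\log D_\mu)\bigr).C=0$. On the other hand, the Viehweg-Zuo sheaf $\mathcal A_\mu\subseteq \Sym^{[m]}\Omega^{[1]}_{Y_\mu}(\log D_\mu)$ has maximal Kodaira-Iitaka dimension, and since $Y_\mu$ is $\mathbb Q$-factorial with $\rho(Y_\mu)=1$, any $\mathbb Q$-Cartier divisor of positive Kodaira-Iitaka dimension is ample; hence $c_1(\mathcal A_\mu).C>0$. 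Therefore $\Sym^{[m]}\Omega^{[1]}_{Y_\mu}(\log D_\mu)$ is not $C$-semistable, and in characteristic zero this forces $\Omega^{[1]}_{Y_\mu}(\log D_\mu)$ itself to be $C$-unstable (cf.\ \cite[Cor.~3.2.10]{HL97}, which applies verbatim to the reflexive setting since the sheaf is torsion-free).

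Let $\mathcal B\subseteq \Omega^{[1]}_{Y_\mu}(\log D_\mu)$ denote the maximal destabilising subsheaf, saturated and hence reflexive, with $c_1(\mathcal B).C>0$. Because the slope of $\mathcal B$ exceeds that of the full sheaf (which is zero), the rank $r$ of $\mathcal B$ satisfies $1\le r<\dim Y_\mu$. Taking determinants gives a rank-one reflexive subsheaf
$$
\det\mathcal B\;\subseteq\;\Omega^{[r]}_{Y_\mu}(\log D_\mu)
$$
(the inclusion is defined on the smooth locus by the usual wedge product, and extends to $Y_\mu$ because $\Omega^{[r]}_{Y_\mu}(\log D_\mu)$ is reflexive and the complement has codimension at least two). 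Since $Y_\mu$ is $\mathbb Q$-factorial, $\det\mathcal B$ is $\mathbb Q$-Cartier; since $\rho(Y_\mu)=1$ and $c_1(\det\mathcal B).C>0$, the divisor $\det\mathcal B$ is in fact ample, so $\kappa(\det\mathcal B)=\dim Y_\mu$.

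The final step is to apply the generalised Bogomolov-Sommese Vanishing Theorem, Corollary~\ref{cor:BS}: by Claim~\ref{claim:smuklt} the pair $(Y_\mu,D_\mu)$ is log canonical, and $\det\mathcal B$ is a $\mathbb Q$-Cartier reflexive subsheaf of rank one in $\Omega^{[r]}_{Y_\mu}(\log \lfloor D_\mu\rfloor)$, so Corollary~\ref{cor:BS} gives $\kappa(\det\mathcal B)\le r<\dim Y_\mu$, contradicting what we just established. This contradiction proves $\rho(Y_\mu)>1$, so the Mori fibre space structure $\pi:Y_\mu\to Z$ has a positive-dimensional base $Z$ with $\dim Z<\dim Y_\mu$, as asserted. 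The only non-routine point is step~(5), where $C$-instability of the symmetric power must be transferred to $C$-instability of $\Omega^{[1]}_{Y_\mu}(\log D_\mu)$ on the singular space $Y_\mu$; but since a general complete intersection curve avoids the singular locus where the reflexive sheaves fail to be locally free, the classical Ramanan-Ramanathan argument goes through without modification.
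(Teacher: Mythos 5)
Your proposal is correct and follows essentially the same route as the paper's own proof: use the numerical triviality of $K_{Y_\mu}+D_\mu$ together with the bigness of the Viehweg--Zuo sheaf $\sA_\mu$ to destabilise $\Omega^{[1]}_{Y_\mu}(\log D_\mu)$ along a general Mehta--Ramanathan curve, take the determinant of the destabilising subsheaf to obtain a rank-one reflexive subsheaf of $\Omega^{[r]}_{Y_\mu}(\log D_\mu)$ of positive degree, and note that $\rho(Y_\mu)=1$ together with $\mathbb Q$-factoriality would force this sheaf to be $\mathbb Q$-ample, contradicting the Bogomolov--Sommese vanishing theorem for log canonical pairs (Corollary~\ref{cor:BS}) via Claim~\ref{claim:smuklt}. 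You merely spell out some steps the paper compresses into ``as before'', such as the passage from instability of the symmetric power to instability of $\Omega^{[1]}_{Y_\mu}(\log D_\mu)$ and the $\mathbb Q$-Cartier property of $\det\mathcal B$.
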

\begin{proof}
  As before, let $C \subseteq Y_\mu$ be a general complete intersection
  curve. Again, the existence of the Viehweg-Zuo sheaf $\mathcal A_\mu$
  implies that the sheaf of reflexive differentials $\Omega^{[1]}_{Y_\mu}(\log
  D_\mu)$ is not semistable, and contains a destabilising subsheaf $\mathcal
  B_\mu \subseteq \Omega^{[1]}_{Y_\mu}(\log D_\mu)$ with $c_1(\mathcal
  B_\mu).C > 0$. Since the intersection number $c_1(\mathcal B_\mu).C$ is
  positive, the rank $r$ of the sheaf $\mathcal B_\mu$ must be strictly less
  than $\dim Y_\mu$, and its determinant is a subsheaf of the sheaf of
  logarithmic $r$-forms,
  $$
  \det \mathcal B_\mu \subseteq \Omega^{[r]}_{Y_\mu}(\log D_\mu)
  \quad\text{with}\quad c_1(\det \mathcal B_\mu).C > 0 \quad\text{and}\quad
  r<\dim Y_\mu.
  $$

  If $\rho(Y_\mu) = 1$, then the sheaf $\det \mathcal B_\mu$ would necessarily
  be $\mathbb Q$-ample, violating the Bogomolov-Sommese vanishing theorem for
  log canonical pairs, Corollary~\ref{cor:BS}. This finishes the proof of
  Claim~\ref{claim:55}.
\end{proof}

Now, if $F \subset Y_\mu$ is a general fibre of $\pi$ and $D_F := D_\mu \cap
F$, then $F$ is a normal curve or surface, and the pair $(F,D_F)$ is log
canonical and has Kodaira dimension $\kappa(K_F + D_F) = 0$. By
\cite[Prop.~4.11]{KM98}, the variety $F$ is even $\mathbb Q$-factorial. It is
then possible to argue by induction: assuming that Viehweg's conjecture holds
for families over surfaces, one obtains that the restriction of the family
$f^\circ$ to the strict transform $(\mu\circ\lambda)_*^{-1}(F)$ cannot be of
maximal variation. Since the fibres dominate the variety, this contradicts the
assumption that the family $f^\circ$ is of maximal variation, and therefore
finishes the sketch of proof of Proposition~\ref{prop:vzsimple2}.

\medskip

The reader interested in more details is referred to \cite[Sect.~9]{KK08c},
where a stronger statement is shown, proving that any family over a base
manifold with $\kappa(Y^\circ)=0$ is actually isotrivial.

\providecommand{\bysame}{\leavevmode\hbox to3em{\hrulefill}\thinspace}
\providecommand{\MR}{\relax\ifhmode\unskip\space\fi MR}
\providecommand{\MRhref}[2]{%
  \href{http://www.ams.org/mathscinet-getitem?mr=#1}{#2}
}
\providecommand{\href}[2]{#2}

\end{document}